\begin{document}
\title[]
{Normalized solutions to the fractional Kirchhoff equations with combined nonlinearities}

\author[L. Liu, H. Chen, J. Yang\hfil \hfilneg]
{lintao Liu, Haibo Chen, Jie Yang}  
\address{Lintao Liu \newline
School of Mathematics and Statistics, Central South University, Changsha, Hunan 410083, PR China}
\email{liulintao1995@163.com}

\address{Haibo Chen (Corresponding Author)\newline
School of Mathematics and Statistics, Central South University, Changsha, Hunan 410083, PR China}
\email{math\_chb@163.com}

\address{Jie Yang \newline
School of Mathematics and Statistics, Central South University, Changsha, Hunan 410083, PR China}
\email{dafeyang@163.com}

\thanks{}
\subjclass[2010]{35Q55, 35J20}
\keywords{Normalized solutions;    Fractional Kirchhoff equations;   Lagrange multiplier
\hfill\break\indent Decreasing rearrangement.}

\begin{abstract}
In this paper, we study the existence and asymptotic properties of solutions to the following fractional Kirchhoff equation
\begin{equation*}
\left(a+b\int_{\mathbb{R}^{3}}|(-\Delta)^{\frac{s}{2}}u|^{2}dx\right)(-\Delta)^{s}u=\lambda u+\mu|u|^{q-2}u+|u|^{p-2}u \quad \hbox{in $\mathbb{R}^3$,}
\end{equation*}
with a prescribed mass
\begin{equation*}
\int_{\mathbb{R}^{3}}|u|^{2}dx=c^{2},
\end{equation*}
where $s\in(0, 1)$, $a, b, c>0$, $2<q<p<2_{s}^{\ast}=\frac{6}{3-2s}$, $\mu>0$ and $\lambda\in\mathbb{R}$ as a Lagrange multiplier. Under different assumptions on $q<p$, $c>0$ and $\mu>0$, we prove some existence results about the normalized solutions. Our results extend the results of Luo and Zhang (Calc. Var. Partial Differential Equations 59, 1-35, 2020) to the fractional Kirchhoff equations. Moreover, we give some results about the behavior of the normalized solutions obtained above as $\mu\rightarrow0^{+}$.

\end{abstract}

\maketitle
\numberwithin{equation}{section}
\newtheorem{theorem}{Theorem}[section]
\newtheorem{lemma}[theorem]{Lemma}
\newtheorem{definition}[theorem]{Definition}
\newtheorem{remark}[theorem]{Remark}
\newtheorem{proposition}[theorem]{Proposition}
\newtheorem{corollary}[theorem]{Corollary}
\allowdisplaybreaks

\section{Introduction}

In this paper, we are concerned with the existence of solutions $(\lambda, u)\in\mathbb{R}\times H^{s}(\mathbb{R}^{3})$ to the following fractional Kirchhoff equation:
\begin{equation}\label{equ1-1}
\left(a+b\int_{\mathbb{R}^{3}}|(-\Delta)^{\frac{s}{2}}u|^{2}dx\right)(-\Delta)^{s}u=\lambda u+\mu|u|^{q-2}u+|u|^{p-2}u \quad \hbox{in $\mathbb{R}^3$,}
\end{equation}
under the normalized constraint
\begin{equation}\label{equ1-2}
\int_{\mathbb{R}^{3}}|u|^{2}dx=c^{2},
\end{equation}
where $s\in(0, 1)$, $a, b, c>0$, $2<q<p<2_{s}^{\ast}=\frac{6}{3-2s}$ and $\mu>0$ is a parameter. The fractional Laplaction $(-\Delta)^{s}$$(s\in(0, 1))$ can be defined by
\begin{equation*}
(-\Delta)^{s}v(x)=C_{s}P.V.\int_{\mathbb{R}^{3}}\frac{v(x)-v(y)}{|x-y|^{3+2s}}dy=C_{s}\lim_{\varepsilon\rightarrow0}\int_{\mathbb{R}^{3}\backslash B_{\varepsilon}(x)}\frac{v(x)-v(y)}{|x-y|^{3+2s}}dy
\end{equation*}
for $v\in \mathcal{S}(\mathbb{R}^{3})$, where $\mathcal{S}(\mathbb{R}^{3})$ is the Schwartz space of rapidly decaying $\mathcal{C}^{\infty}$ function, $B_{\varepsilon}(x)$ denote an open ball of radius $\varepsilon$ centered at $x$ and the normalization constant $C_{s}=(\int_{\mathbb{R}^{3}}\frac{1-\cos(\zeta_{1})}{|\zeta|^{3+2s}})^{-1}$. For $u\in \mathcal{S}(\mathbb{R}^{3})$, the fractional Laplaction $(-\Delta)^{s}$ can be defined by the Fourier transform $(-\Delta)^{s}u=\mathfrak{F}^{-1}(|\xi|^{2s}\mathfrak{F}u)$, $\mathfrak{F}$ being the usual Fourier transform. The application background of operator $(-\Delta)^{s}$ can be founded in several areas such as fractional quantum mechanics \cite{L,LL}, physics and chemistry \cite{M}, obstacle problems \cite{S}, optimization and finance \cite{C}, conformal geometry and minimal surfaces \cite{CC} and so on. Note that $(-\Delta)^{s}$ on $\mathbb{R}^{N}$$(N\geq1)$ is a nonlocal operator. In the remarkable work of Caffarelli and Silvestre \cite{CCC}, they introduced the extension method that reduced this nonlocal problem into a local one in higher dimensions. More precisely, for a $u\in H^{s}(\mathbb{R}^{N})$, the extension $U: \mathbb{R}^{N}\times [0, \infty)\rightarrow \mathbb{R}$ be defined by
\begin{equation*}
\left\{
\begin{array}{ll}
-div(y^{1-2s}\nabla U)=0, & \hbox{in $\mathbb{R}_{+}^{N+1}$,} \\
U(x, 0)=u, & \hbox{in $\mathbb{R}^{N}$.}
\end{array}
\right.
\end{equation*}
Then, it follows from \cite{CCC} that
\begin{equation*}
(-\Delta)^{s}u(x)=-C_{N, s}\lim_{y\rightarrow0^{+}}y^{1-2s}U_{y}(x, y),
\end{equation*}
where $C_{N, s}$ is a constant depending on $N$ and $s$. This extension method has been successfully applied to nonlinear equations involving a fractional Laplacian, and a series of significant results have been obtained, we refer the readers to see \cite{DD,DDD,LLL,P,SS,WWW} and the references therein.

For the equation \eqref{equ1-1}, a direct choice is to search for solutions $u\in H^{s}(\mathbb{R}^{3})$ by looking for critical points of the functional $F: H^{s}(\mathbb{R}^{3})\rightarrow\mathbb{R}$ defined by:
\begin{equation}\label{equ1-3}
\begin{split}
F(u):&=\frac{a}{2}\int_{\mathbb{R}^{3}}|(-\Delta)^{\frac{s}{2}}u|^{2}dx+\frac{b}{4}\left(\int_{\mathbb{R}^{3}}|(-\Delta)^{\frac{s}{2}}u|^{2}dx \right)^{2}\\
&-\frac{\lambda}{2}\int_{\mathbb{R}^{3}}|u|^{2}dx-\frac{\mu}{q}\int_{\mathbb{R}^{3}}|u|^{q}dx-\frac{1}{p}\int_{\mathbb{R}^{3}}|u|^{p}dx,
\end{split}
\end{equation}
where $\lambda\in\mathbb{R}$ is fixed, please see \cite{HH,LLLLL} for more details.

Moreover, one can look for solutions to \eqref{equ1-1} having a prescribed $L^{2}$-norm, and in this case $\lambda\in\mathbb{R}$ is unknown. It is well known that equation \eqref{equ1-1} arises from looking for the standing wave type solutions $\psi(x, t)=e^{-i\lambda t}u(x)$, $\lambda\in\mathbb{R}$ for the following time-dependent nonlinear fractional Schr\"{o}dinger equation:
\begin{equation}\label{equ1-4}
i\frac{\partial \psi}{\partial t}=(a+b\int_{\mathbb{R}^{3}}|(-\Delta)^{\frac{s}{2}}\psi|^{2}dx)(-\Delta)^{s}\psi-f(|\psi|)\psi, \quad \hbox{in $\mathbb{R}^{3}$,}
\end{equation}
where $0<s<1$, $i$ denotes the imaginary unit and $\psi=\psi(x, t): \mathbb{R}^{3}\times[0, +\infty)\rightarrow\mathbb{C}$. Clearly, $\psi$ solves \eqref{equ1-4} if and only if the standing wave $u(x)$ satisfies \eqref{equ1-1} with $f(u)=u^{p-2}+\mu u^{q-2}$. Let $\psi$ is a solutions of \eqref{equ1-4}, multiplying \eqref{equ1-4} by the conjugate $\bar{\psi}$ of $\psi$, integrating over $\mathbb{R}^{3}$, and taking the imaginary part, we get $\frac{d}{dt}|\psi(t)|_{2}^{2}=0$, which implies that solutions $\psi\in C([0, T); H^{s}(\mathbb{R}^{3}))$ to \eqref{equ1-4} maintain their mass along time, it is natural, this approach seems to be particularly meaningful from the physical point of view.

When $s=1$, $a=1$ and $b=0$ i.e. for the Laplacian operator. In \cite{J}, Jeanjean's firstly proved the existence of normalized solutions in $L^{2}$-supercritical case, after that, there have been a lot of works about the existence and properties of normalized solutions. For instance,  very recent works of Soave \cite{SSSSSS}, author proved the existence and properties of normalized ground states for the NLS equation with combined nonlinearities
\begin{equation*}
-\Delta u=\lambda u+\mu|u|^{q-2}u+|u|^{p-2}u  \quad \hbox{in $\mathbb{R}^N$,}\quad \hbox{$N\geq1$,}
\end{equation*}
under the constraint
\begin{equation*}
\int_{\mathbb{R}^{N}}|u|^{2}dx=a^{2},
\end{equation*}
where $a>0$, $\mu\in\mathbb{R}$, $2<q\leq2+\frac{4}{N}\leq p<2^{\ast}=\frac{2N}{N-2}$ and $q\neq p$.

And in \cite{SSSSSSS}, Soave also proved the existence and properties of normalized ground states for the NLS equation with the Sobolev critical exponent
\begin{equation*}
-\Delta u=\lambda u+\mu|u|^{q-2}u+|u|^{2^{\ast}-2}u  \quad \hbox{in $\mathbb{R}^N$} \quad \hbox{$N\geq3$,}
\end{equation*}
with prescribed mass:
\begin{equation*}
\int_{\mathbb{R}^{N}}|u|^{2}dx=a^{2},
\end{equation*}
where $a>0$, $\mu\in\mathbb{R}$ and $2<q<2^{\ast}=\frac{2N}{N-2}$. For other related and important results, we refer the readers to \cite{GG,I} and their references.

When $s\neq1$, $a=1$ and $b=0$, i.e. for the fractional Schr\"{o}dinger equations, Luo and Zhang \cite{LLLL} proved some existence and nonexistence results about the normalized solutions of the fractional nonlinear Schr\"{o}dinger equations with combined nonlinearities
\begin{equation*}
(-\Delta)^{s}u=\lambda u+\mu|u|^{q-2}u+|u|^{p-2}u  \quad \hbox{in $\mathbb{R}^N$,}
\end{equation*}
with prescribed mass:
\begin{equation*}
\int_{\mathbb{R}^{N}}|u|^{2}dx=a^{2},
\end{equation*}
where $0<s<1$, $N\geq2$, $\mu\in\mathbb{R}$ and $2<q<p<2_{s}^{\ast}=\frac{2N}{N-2s}$.

In \cite{ZZ}, Zhen and Zhang studied the normalized ground states for the following critical fractional NLS equation with prescribed mass
\begin{equation*}
\left\{
\begin{array}{ll}
(-\Delta)^{s}u=\lambda u+\mu|u|^{q-2}u+|u|^{p-2}u \quad \hbox{in $\mathbb{R}^N$,} \\
\int_{\mathbb{R}^{N}}|u|^{2}dx=a^{2},
\end{array}
\right.
\end{equation*}
where $0<s<1$, $N>2s$, $\mu\in\mathbb{R}$ and $2<q<2_{s}^{\ast}=\frac{2N}{N-2s}$.

When $s=1$, $a>0$ and $b>0$, \eqref{equ1-1} reduces to the classical Kirchhoff equation
\begin{equation}\label{equ1-5}
-\left(a+b\int_{\mathbb{R}^{3}}|\nabla u|^{2}dx\right)\Delta u=\lambda u+\mu|u|^{q-2}u+|u|^{p-2}u  \quad \hbox{in $\mathbb{R}^3$.}
\end{equation}
If it is to search for solutions $u\in H^{1}(\mathbb{R}^{3})$ by looking for critical point of the functional $K: H^{1}(\mathbb{R}^{3})\rightarrow\mathbb{R}$ defined by:
\begin{equation*}
\begin{split}
K(u):&=\frac{a}{2}\int_{\mathbb{R}^{3}}|\nabla u|^{2}dx+\frac{b}{4}\left(\int_{\mathbb{R}^{3}}|\nabla u|^{2} dx\right)^{2}\\
&-\frac{\lambda}{2}\int_{\mathbb{R}^{3}}|u|^{2}dx-\frac{\mu}{q}\int_{\mathbb{R}^{3}}|u|^{q}dx-\frac{1}{p}\int_{\mathbb{R}^{3}}|u|^{p}dx,
\end{split}
\end{equation*}
where $\lambda\in\mathbb{R}$ is fixed, then equation \eqref{equ1-5} has been extensively studied, see e.g. \cite{HHH,HHHH} and the references therein. At present, there are only few papers which considered the normalized solutions for the equation \eqref{equ1-5}. For instance, Li, Luo and Yang \cite{LLLLLL} proved the existence and asymptotic properties of solutions to the above Kirchhoff equation \eqref{equ1-5} under the normalized constraint $\int_{\mathbb{R}^{3}}|u|^{2}dx=c^{2}$ with $a, b, c>0$, $2<q<\frac{14}{3}<p\leq6$ or $\frac{14}{3}<q<p\leq6$, $\mu>0$ and $\lambda\in\mathbb{R}$ appears as a Lagrange multiplier. To our best knowledge, the existence of normalized solutions to equation \eqref{equ1-1} with $a, b, c>0$, $\mu>0$ and $2<q<p<2_{s}^{\ast}=\frac{6}{3-2s}$ is still unknown. Inspired by \cite{LLLL}, in this paper, we consider the existence result of normalized solutions for fractional Kirchhoff type problem \eqref{equ1-1} with prescribed mass $\int_{\mathbb{R}^{3}}|u|^{2}dx=c^{2}$.

It is well known that the fractional Sobolev space $H^{s}(\mathbb{R}^{3})$ can be defined as follows
\begin{equation*}
H^{s}(\mathbb{R}^{3})=\{u\in L^{2}(\mathbb{R}^{3})|\int_{\mathbb{R}^{3}}|(-\Delta)^{\frac{s}{2}}u|^{2}dx<+\infty\},
\end{equation*}
endowed with the norm
\begin{equation*}
\|u\|^{2}=\int_{\mathbb{R}^{3}}(|(-\Delta)^{\frac{s}{2}}u|^{2}+|u|^{2})dx,
\end{equation*}
where
\begin{equation*}
\int_{\mathbb{R}^{3}}|(-\Delta)^{\frac{s}{2}}u|^{2}dx=\int\int_{\mathbb{R}^{6}}\frac{|u(x)-u(y)|^{2}}{|x-y|^{3+2s}}dxdy.
\end{equation*}
Moreover, we denote $H^{s}_{r}(\mathbb{R}^{3})$ by
\begin{equation*}
H^{s}_{r}(\mathbb{R}^{3})=\{u\in H^{s}(\mathbb{R}^{3}): u(x)=u(|x|), x\in\mathbb{R}^{3}\}.
\end{equation*}
In order to obtain the normalized solutions, we also define the energy functional $E_{\mu}: H^{s}(\mathbb{R}^{3})\rightarrow\mathbb{R}$ by
\begin{equation}\label{equ1-6}
E_{\mu}(u)=\frac{a}{2}\int_{\mathbb{R}^{3}}|(-\Delta)^{\frac{s}{2}}u|^{2}dx+\frac{b}{4}\left(\int_{\mathbb{R}^{3}}|(-\Delta)^{\frac{s}{2}}u|^{2}dx \right)^{2}-\frac{\mu}{q}\int_{\mathbb{R}^{3}}|u|^{q}dx-\frac{1}{p}\int_{\mathbb{R}^{3}}|u|^{p}dx.
\end{equation}
It is easy to check that $E_{\mu}\in C^{1}(\mathbb{R}^{3}, \mathbb{R})$. Then the weak solutions of \eqref{equ1-1} are corresponding to critical points of the energy functional $E_{\mu}$ under the constraint
\begin{equation*}
M_{c}:=\{u\in H^{s}(\mathbb{R}^{3}): \int_{\mathbb{R}^{3}}|u|^{2}dx=c^{2}\}.
\end{equation*}
In this paper, we will be particularly interested in ground state solutions, so we write that $\tilde{u}$ is a ground state of \eqref{equ1-1} on $M_{c}$ if it satisfies:
\begin{equation*}
dE_{\mu}|_{M_{c}}(\tilde{u})=0 \quad and \quad E_{\mu}(\tilde{u})=\inf_{u\in M_{c}}\{E_{\mu}(u): dE_{\mu}|_{M_{c}}(u)=0\}.
\end{equation*}
For simplicity, we define some important parameters. For $p, q\in(2, 2_{s}^{\ast})$, we denote:
\begin{equation*}
\vartheta_{s, p}=\frac{3(p-2)}{2sp}, \quad \vartheta_{s, q}=\frac{3(q-2)}{2sq}.
\end{equation*}
It is easy to see that $\vartheta_{s, p}, \vartheta_{s, q}\in(0, 1)$ and
\begin{equation*}
\begin{cases}
q\vartheta_{s, q}<2<4<p\vartheta_{s, p}, & 2<q<2+\frac{4s}{3} \quad \hbox{and}\quad 2+\frac{8s}{3}<p<2_{s}^{\ast},\\
4<q\vartheta_{s, q}<p\vartheta_{s, p}, & 2+\frac{8s}{3}<q<p<2_{s}^{\ast}.
\end{cases}
\end{equation*}
For $2<q<2+\frac{4s}{3}$ and $2+\frac{8s}{3}<p<2_{s}^{\ast}$, we also denote:
\begin{equation}\label{equ1-7}
\mu_{1}:=\frac{1}{c^{q(1-\vartheta_{s, q})+\frac{p(1-\vartheta_{s, p})(4-q\vartheta_{s, q})}{p\vartheta_{s, p}-4}}}\left[\frac{q(p\vartheta_{s, p}-4)b}{4(p\vartheta_{s, p}-q\vartheta_{s, q})C^{q}(s, q)}\right]\left[\frac{p(4-q\vartheta_{s, q})b}{4(p\vartheta_{s, p}-q\vartheta_{s, q})C^{p}(s, p)}\right]^{\frac{4-q\vartheta_{s, q}}{p\vartheta_{s, p}-4}},
\end{equation}
\begin{equation}\label{equ1-8}
\mu_{2}:=\frac{qA(p, q, s)}{C^{q}(s, q)}\left[\frac{\frac{a}{2}(\frac{bp}{4C^{p}(s, p)})^{\frac{2-q\vartheta_{s, q}}{p\vartheta_{s, p}-4}}}{c^{q(1-\vartheta_{s, q})+\frac{p(1-\vartheta_{s, p})(2-q\vartheta_{s, q})}{p\vartheta_{s, p}-4}}}+\frac{(\frac{b}{4})^{\frac{p\vartheta_{s, p}-q\vartheta_{s, q}}{p\vartheta_{s, p}-4}}(\frac{p}{C^{p}(s, p)})^{\frac{4-q\vartheta_{s, q}}{p\vartheta_{s, p}-4}}}{c^{q(1-\vartheta_{s, q})+\frac{p(1-\vartheta_{s, p})(4-q\vartheta_{s, q})}{p\vartheta_{s, p}-4}}}\right],
\end{equation}
where $A(p, q, s):=\left(\frac{8(4-q\vartheta_{s, q})}{p\vartheta_{s, p}(p\vartheta_{s, p}-2)(p\vartheta_{s, p}-q\vartheta_{s, q})}\right)^{\frac{4-q\vartheta_{s, q}}{p\vartheta_{s, p}-4}}-\left(\frac{8(4-q\vartheta_{s, q})}{p\vartheta_{s, p}(p\vartheta_{s, p}-2)(p\vartheta_{s, p}-q\vartheta_{s, q})}\right)^{\frac{p\vartheta_{s, p}-q\vartheta_{s, q}}{p\vartheta_{s, p}-4}}$, $C(s, \alpha)$ satisfy the fractional Gagliardo-Nirenberg-Sobolev inequality:
\begin{equation*}
\|u\|_{\alpha}\leq C(s, \alpha)\|(-\Delta)^{\frac{s}{2}}u\|_{2}^{\vartheta_{s, \alpha}}\|u\|_{2}^{1-\vartheta_{s, \alpha}}, \quad \alpha\in(2, 2_{s}^{\ast}),
\end{equation*}
see Section 2 below for details.

Next, we call $2+\frac{8s}{3}$ is the $L^{2}$-critical exponent for \eqref{equ1-1}, since
\begin{equation*}
\begin{cases}
\inf_{u\in M_{c}}E_{\mu}(u)>-\infty, & p, q\in(2, 2+\frac{8s}{3}),\\
\inf_{u\in M_{c}}E_{\mu}(u)=-\infty, & p\in(2+\frac{8s}{3}, 2_{s}^{\ast})\quad \hbox{or}\quad q\in(2+\frac{8s}{3}, 2_{s}^{\ast}).
\end{cases}
\end{equation*}
Therefore, we will consider problem \eqref{equ1-1} in following two cases:\\
$(a)$ the mixed critical case: $a, b, c>0$, $\mu>0$ and $2<q<2+\frac{8s}{3}<p<2_{s}^{\ast}$;\\
$(b)$ the $L^{2}$-supercritical case: $a, b, c>0$, $\mu>0$ and $2+\frac{8s}{3}<q<p<2_{s}^{\ast}$.

Now we state our main result as follows. In the case $(a)$, we have
\begin{theorem}\label{the1-1}
Let $2<q<2+\frac{4s}{3}$, $2+\frac{8s}{3}<p<2_{s}^{\ast}$ and $0<\mu<\min\{\mu_{1}, \mu_{2}\}$. Then\\
$(1)$ $E_{\mu}|_{M_{a}}$ has a ground state $\tilde{u}_{\mu}$ with the following properties: $\tilde{u}_{\mu}$ is a positive, radially symmetric function, and solves \eqref{equ1-1} for some $\tilde{\lambda}_{\mu}<0$. Moreover, denoting by $m(c, \mu)=E_{\mu}(\tilde{u}_{\mu})$, we have $m(c, \mu)<0$ and $\tilde{u}_{\mu}$ is an interior local minimizer of $E_{\mu}$ on the set
\begin{equation*}
A_{k}:=\{u\in M_{a}: \|(-\Delta)^{\frac{s}{2}}u\|_{2}<k\},
\end{equation*}
for some $k>0$ small enough. Any other ground state of $E_{\mu}$ on $M_{a}$ is a local minimizer of $E_{\mu}$ on $A_{k}$.\\
$(2)$ $E_{\mu}$ has a second critical point of Mountain Pass type $\hat{u}_{\mu}$ at some energy level $\sigma(c, \mu)>0$ and $\hat{u}_{\mu}$ with the following properties: $\hat{u}_{\mu}$ is a positive, radially symmetric function, and solves \eqref{equ1-1} for some $\hat{\lambda}_{\mu}<0$.

\end{theorem}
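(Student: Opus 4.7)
The plan is to exploit the $L^{2}$-preserving dilation $(t\star u)(x):=t^{3/2}u(tx)$. Setting $\psi_{u}(t):=E_{\mu}(t\star u)$, one computes
\[
\psi_{u}(t)=\frac{a t^{2s}}{2}\|(-\Delta)^{s/2}u\|_{2}^{2}+\frac{b t^{4s}}{4}\|(-\Delta)^{s/2}u\|_{2}^{4}-\frac{\mu t^{sq\vartheta_{s,q}}}{q}\|u\|_{q}^{q}-\frac{t^{sp\vartheta_{s,p}}}{p}\|u\|_{p}^{p},
\]
whose four exponents are strictly ordered under the hypothesis $q\vartheta_{s,q}<2<4<p\vartheta_{s,p}$. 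Thus $\psi_{u}(t)<0$ for $t$ small while $\psi_{u}(t)\to-\infty$ as $t\to+\infty$. A Gagliardo--Nirenberg--Sobolev analysis, coupled with the smallness assumption $\mu<\mu_{1}$ (whose explicit form is exactly calibrated so that an auxiliary polynomial in $\|(-\Delta)^{s/2}u\|_{2}$ remains positive on an interval), shows that for every $u\in M_{c}$ the map $\psi_{u}$ admits precisely two critical points $0<t_{+}(u)<t_{-}(u)$, with $t_{+}$ a strict local minimum satisfying $\psi_{u}(t_{+})<0$, and there is $R_{0}>0$ (depending only on $c,\mu$) with $\|(-\Delta)^{s/2}(t_{+}\star u)\|_{2}<R_{0}<\|(-\Delta)^{s/2}(t_{-}\star u)\|_{2}$.

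For part (1) I take $k=R_{0}$. Then Gagliardo--Nirenberg--Sobolev gives $E_{\mu}$ bounded below on $A_{k}$, and testing with $t_{+}\star u$ yields $m(c,\mu)=\inf_{A_{k}}E_{\mu}<0$. A minimizing sequence can be replaced, via the fractional symmetric decreasing rearrangement (P\'olya--Szeg\H{o} in $H^{s}$), by nonnegative radial functions remaining strictly inside $A_{k}$. The compact embedding $H^{s}_{r}(\mathbb{R}^{3})\hookrightarrow L^{r}(\mathbb{R}^{3})$ for $r\in(2,2_{s}^{\ast})$ identifies the $L^{q}$ and $L^{p}$ limits, and the strict negativity $m(c,\mu)<0$ rules out vanishing, producing a weak limit $\tilde{u}_{\mu}\geq 0$ to which convergence is strong in $H^{s}$. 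Lagrange multipliers yield $\tilde{\lambda}_{\mu}\in\mathbb{R}$, and combining the tested equation with the Pohozaev identity $\psi_{\tilde{u}_{\mu}}'(1)=0$ gives
\[
\tilde{\lambda}_{\mu}\|\tilde{u}_{\mu}\|_{2}^{2}=-\mu(1-\vartheta_{s,q})\|\tilde{u}_{\mu}\|_{q}^{q}-(1-\vartheta_{s,p})\|\tilde{u}_{\mu}\|_{p}^{p}<0,
\]
while positivity of $\tilde{u}_{\mu}$ follows from the strong maximum principle for $(-\Delta)^{s}$.

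For part (2) I build the mountain pass level
\[
\sigma(c,\mu):=\inf_{\gamma\in\Gamma}\max_{t\in[0,1]}E_{\mu}(\gamma(t)),
\]
where $\Gamma$ collects the continuous paths in $M_{c}$ joining $\tilde{u}_{\mu}$ to some $w\in M_{c}$ with $\|(-\Delta)^{s/2}w\|_{2}>R_{0}$ and $E_{\mu}(w)<m(c,\mu)$; such a $w$ exists because $\psi_{u}(t)\to-\infty$. Every such path must cross the sphere $\{\|(-\Delta)^{s/2}u\|_{2}=R_{0}\}$, on which $E_{\mu}\geq m(c,\mu)+\delta$ for some $\delta>0$, whence $\sigma(c,\mu)>0$. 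Following Jeanjean's scheme, I pass to the auxiliary functional $\widetilde{E}_{\mu}(\tau,u):=E_{\mu}(e^{\tau}\star u)$ on $\mathbb{R}\times H^{s}_{r}(\mathbb{R}^{3})$ with $\|u\|_{2}=c$, and extract a Palais--Smale sequence $\{u_{n}\}\subset H^{s}_{r}$ at level $\sigma(c,\mu)$ which in addition satisfies $\psi_{u_{n}}'(1)\to 0$. This extra Pohozaev condition, combined with Gagliardo--Nirenberg--Sobolev, pins $\|(-\Delta)^{s/2}u_{n}\|_{2}$ inside a bounded set; radial compactness produces a weak limit $\hat{u}_{\mu}\not\equiv 0$, and a Br\'ezis--Lieb splitting combined with the second Pohozaev identity upgrades the convergence to strong. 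The sign of $\hat{\lambda}_{\mu}$ follows as in part (1).

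The principal obstacle is to ensure that the Palais--Smale sequence at the strictly positive level $\sigma(c,\mu)$ does not lose compactness: because the Kirchhoff coefficient $b\|(-\Delta)^{s/2}u\|_{2}^{2}$ changes across a possible splitting, any profile carried off by a nontrivial bubble would satisfy a different limit equation, so standard comparison with a candidate limit problem is delicate. The second threshold $\mu_{2}$, whose unusual expression involving $A(p,q,s)$ emerges from optimizing the fiber map against the Gagliardo--Nirenberg--Sobolev estimate, is designed precisely to push $\sigma(c,\mu)$ below the energy cost of any such bubbling, thereby closing the compactness argument.
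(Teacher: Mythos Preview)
Your overall architecture is close to the paper's, but there is a genuine conceptual error in the final paragraph: you misidentify the role of $\mu_{2}$. You claim it is ``designed precisely to push $\sigma(c,\mu)$ below the energy cost of any such bubbling, thereby closing the compactness argument.'' This is wrong. Because $p<2_{s}^{\ast}$ strictly, there is no bubbling phenomenon at all; the compactness of Palais--Smale sequences (Proposition~\ref{lem3-1}) holds for \emph{every} $\mu>0$ and rests solely on the compact embedding $H^{s}_{r}(\mathbb{R}^{3})\hookrightarrow L^{r}(\mathbb{R}^{3})$ for $r\in(2,2_{s}^{\ast})$ together with the side condition $P_{\mu}(u_{n})\to 0$. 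The actual role of $\mu_{2}$ is purely geometric (Lemma~\ref{lem4-3} via Lemma~\ref{lem2-8}): it guarantees that the Gagliardo--Nirenberg lower bound
\[
h(t)=\tfrac{a}{2}t^{2}+\tfrac{b}{4}t^{4}-\tfrac{C^{p}(s,p)}{p}c^{p(1-\vartheta_{s,p})}t^{p\vartheta_{s,p}}-\tfrac{\mu C^{q}(s,q)}{q}c^{q(1-\vartheta_{s,q})}t^{q\vartheta_{s,q}}
\]
has a local strict minimum at a negative level and a global strict maximum at a positive level, which is what forces each fiber map to have exactly two critical points and produces the radius $R_{0}$. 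Conversely, the role you assign to $\mu_{1}$ (positivity of the auxiliary polynomial on an interval) is really the job of $\mu_{2}$; what $\mu_{1}$ does (Lemma~\ref{lem4-1}) is ensure $\mathcal{P}_{c,\mu}^{0}=\emptyset$, so that $\mathcal{P}_{c,\mu}$ is a smooth natural constraint. In short, you have swapped the two thresholds and invented a nonexistent compactness obstruction.

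A second, smaller gap lies in Part~(1). Your direct-minimisation sketch (``strict negativity rules out vanishing \ldots\ convergence is strong in $H^{s}$'') does not explain why the $L^{2}$ mass is preserved in the limit: radial compactness controls $L^{q}$ and $L^{p}$ but not $L^{2}$, and a nonzero weak limit alone does not yield strong $H^{s}$ convergence. The paper closes this by projecting the rearranged minimising sequence onto $\mathcal{P}_{c,\mu}^{+}$ via the fiber map (so $P_{\mu}(u_{n})=0$ and the sequence stays in $A_{R_{0}-\rho}$ by Lemma~\ref{lem4-6}), applying Ekeland's principle to obtain a Palais--Smale sequence with $P_{\mu}(u_{n})\to 0$, and then invoking Proposition~\ref{lem3-1}, whose proof extracts $\lambda_{n}\to\lambda<0$ and from this deduces strong $H^{s}$ convergence directly. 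For Part~(2) the paper uses Ghoussoub's minimax principle (Lemma~\ref{lem2-7}) with $F=\mathcal{P}_{c,\mu}^{-}$ rather than Jeanjean's stretched functional; either route produces a Palais--Smale sequence with the Pohozaev side condition, after which Proposition~\ref{lem3-1} finishes the job without any Br\'ezis--Lieb splitting or energy-threshold comparison.
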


Moreover, we also are concerned with behavior of the ground states in Theorem \ref{the1-1} as $\mu\rightarrow0^{+}$.
\begin{theorem}\label{the1-2}
Under the assumptions of Theorem \ref{the1-1}. Then\\
$(1)$ $m(c, \mu)\rightarrow0$ and $\|(-\Delta)^{\frac{s}{2}}\tilde{u}_{\mu}\|_{2}\rightarrow0$ as $\mu\rightarrow0^{+}$;\\
$(2)$ $\sigma(c, \mu)\rightarrow m(c, 0)$ and $\hat{u}_{\mu}\rightarrow u_{0}$ in $H^{s}(\mathbb{R}^{3})$ as $\mu\rightarrow0^{+}$, where $m(c, 0)=E_{\mu}(u_{0})$ and $u_{0}$ is the unique ground state of $E_{0}|_{M_{c}}$.
\end{theorem}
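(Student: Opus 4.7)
The plan is to prove the two parts separately, relying throughout on the Pohozaev identity
\[
a\|(-\Delta)^{s/2}u\|_2^2+b\|(-\Delta)^{s/2}u\|_2^4=\mu\,\vartheta_{s,q}\|u\|_q^q+\vartheta_{s,p}\|u\|_p^p,
\]
which any constrained critical point satisfies, obtained from the scaling $u_t(x)=t^{3/2}u(tx)$.

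For part $(1)$, the interior local minimizer property yields $r_\mu:=\|(-\Delta)^{s/2}\tilde u_\mu\|_2<k$ uniformly, and the fractional Gagliardo-Nirenberg inequality bounds $\|\tilde u_\mu\|_q$ and $\|\tilde u_\mu\|_p$ uniformly. With $k$ chosen as in Theorem \ref{the1-1} (small enough that $\tfrac{a}{2}r^2+\tfrac{b}{4}r^4-\tfrac{1}{p}C(s,p)^p c^{p(1-\vartheta_{s,p})}r^{p\vartheta_{s,p}}\geq 0$ on $[0,k]$), one has $E_0(\tilde u_\mu)\geq 0$, so
\[
m(c,\mu)=E_0(\tilde u_\mu)-\tfrac{\mu}{q}\|\tilde u_\mu\|_q^q\geq -\tfrac{\mu}{q}\|\tilde u_\mu\|_q^q\longrightarrow 0,
\]
which combined with $m(c,\mu)<0$ gives $m(c,\mu)\to 0$. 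For $r_\mu\to 0$, I eliminate $\|\tilde u_\mu\|_p^p$ from the expression for $m(c,\mu)=E_\mu(\tilde u_\mu)$ using the Pohozaev identity to obtain
\[
m(c,\mu)=\Bigl(\tfrac{1}{2}-\tfrac{1}{p\vartheta_{s,p}}\Bigr)a\,r_\mu^{\,2}+\Bigl(\tfrac{1}{4}-\tfrac{1}{p\vartheta_{s,p}}\Bigr)b\,r_\mu^{\,4}-\tfrac{p\vartheta_{s,p}-q\vartheta_{s,q}}{qp\vartheta_{s,p}}\,\mu\|\tilde u_\mu\|_q^q,
\]
with all three displayed constants positive since $q\vartheta_{s,q}<2<4<p\vartheta_{s,p}$. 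Using $m(c,\mu)<0$ together with $\|\tilde u_\mu\|_q^q\leq C(s,q)^q c^{q(1-\vartheta_{s,q})}r_\mu^{q\vartheta_{s,q}}$, the $r_\mu^{\,2}$ term dominates the left-hand side and one deduces $r_\mu^{\,2-q\vartheta_{s,q}}\leq C\mu$, whence $r_\mu\to 0$.

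For part $(2)$, I begin with two-sided bounds on $\sigma(c,\mu)$. Using a MP path for $E_0$ realizing $m(c,0)$, suitably modified so that its endpoints are admissible for the MP class associated with $E_\mu$, together with $E_\mu\leq E_0$ pointwise, gives $\limsup_{\mu\to0^+}\sigma(c,\mu)\leq m(c,0)$. A uniform lower bound $\sigma(c,\mu)\geq\sigma_0>0$ for small $\mu$ follows from the stability of the mountain pass barrier (the local maximum of the Gagliardo-Nirenberg auxiliary function $h_\mu$ approaches that of $h_0$ as $\mu\to0$). The Pohozaev identity combined with these bounds yields a uniform $H^s$ bound on $\hat u_\mu$; radiality and the compact embedding $H^s_r(\mathbb{R}^3)\hookrightarrow L^\alpha(\mathbb{R}^3)$ for $\alpha\in(2,2_s^*)$ provide, along a subsequence, $\hat u_\mu\rightharpoonup u_*$ in $H^s$ and $\hat u_\mu\to u_*$ in $L^p,L^q$. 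Reading $\hat\lambda_\mu$ from the equation tested against $\hat u_\mu$ shows $\hat\lambda_\mu\to\hat\lambda_*$, making $u_*$ a weak solution of the $\mu=0$ problem on $M_{c_*}$ with $c_*:=\|u_*\|_2\leq c$. If $u_*\equiv 0$, then $\|\hat u_\mu\|_p\to 0$, Pohozaev forces $\|(-\Delta)^{s/2}\hat u_\mu\|_2\to 0$ and hence $\sigma(c,\mu)\to 0$, contradicting $\sigma(c,\mu)\geq\sigma_0>0$; so $u_*\not\equiv 0$. Finally, a Brezis-Lieb decomposition for the quadratic, quartic and $L^p$ parts with $v_\mu:=\hat u_\mu-u_*$ (which tends to $0$ in $L^p$ and $L^q$) yields
\[
\sigma(c,\mu)=E_0(u_*)+\tfrac{a}{2}\|(-\Delta)^{s/2}v_\mu\|_2^{\,2}+\tfrac{b}{4}\|(-\Delta)^{s/2}v_\mu\|_2^{\,4}+\tfrac{b}{2}\|(-\Delta)^{s/2}u_*\|_2^{\,2}\|(-\Delta)^{s/2}v_\mu\|_2^{\,2}+o(1).
\]
Taking $\liminf$ gives $E_0(u_*)\leq m(c,0)$; combined with the fact that $u_*$ is a nontrivial critical point of $E_0|_{M_{c_*}}$ and an analysis of $c\mapsto m(c,0)$ based on the Pohozaev-manifold scaling (which in the Kirchhoff $br^4$ regime rules out $c_*<c$), one concludes $c_*=c$ and $E_0(u_*)=m(c,0)$, so $u_*=u_0$ by uniqueness. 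The identity above then forces $\|(-\Delta)^{s/2}v_\mu\|_2\to 0$, and $\|u_*\|_2=c=\|\hat u_\mu\|_2$ gives strong $L^2$ convergence, so $\hat u_\mu\to u_0$ in $H^s(\mathbb{R}^3)$ and $\sigma(c,\mu)=E_\mu(\hat u_\mu)\to E_0(u_0)=m(c,0)$.

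The main obstacle is ruling out mass loss in the weak limit ($c_*<c$), since compactness of the radial embedding fails at $L^2$. I expect this to be the technical heart of the proof, requiring either a sharp monotonicity of $c\mapsto m(c,0)$ exploiting the extra $br^4$ Kirchhoff contribution, or a direct concentration-compactness argument performed on the Pohozaev manifold to preclude splitting of mass.
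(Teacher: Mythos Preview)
Your argument for part $(1)$ is correct and in fact cleaner than the paper's. The paper argues that the first zero $R_0=R_0(c,\mu)$ of the auxiliary function $h$ tends to $0$ as $\mu\to0^+$, hence $\|(-\Delta)^{s/2}\tilde u_\mu\|_2<R_0\to0$, and then $m(c,\mu)\to0$ by Gagliardo--Nirenberg. Your Pohozaev route, giving $r_\mu^{\,2-q\vartheta_{s,q}}\le C\mu$ directly from $m(c,\mu)<0$, reaches the same conclusion without needing to analyse how $R_0$ depends on $\mu$. (A minor caveat: your first argument for $m(c,\mu)\to0$ assumes the $k$ of Theorem~\ref{the1-1} is chosen so that $E_0\ge0$ on $A_k$; in the paper $k=R_0$ is defined differently, so that step is not quite self-contained. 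But your second argument is, and it suffices.)

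For part $(2)$ your outline is broadly correct up to the point where you extract a nontrivial weak limit $u_*$ and a limiting multiplier $\hat\lambda_*$. The obstacle you flag---ruling out mass loss $c_*<c$---is real for the Brezis--Lieb route you propose, but the paper sidesteps it entirely with a much simpler device. Once you know $u_*\not\equiv0$, the relation $\hat\lambda_* c^2=(\vartheta_{s,p}-1)\|u_*\|_p^p$ forces $\hat\lambda_*<0$. Now test the equation for $\hat u_\mu$ and the limit equation for $u_*$ against $\hat u_\mu-u_*$ and subtract: the nonlinear terms vanish by $L^p,L^q$ convergence, leaving
\[
(a+bB)\|(-\Delta)^{s/2}(\hat u_\mu-u_*)\|_2^{\,2}-\hat\lambda_*\|\hat u_\mu-u_*\|_2^{\,2}\longrightarrow0,
\]
and since $-\hat\lambda_*>0$ both terms are nonnegative, so each tends to $0$. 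This gives strong $H^s$ convergence \emph{including} $L^2$, so $\|u_*\|_2=c$ automatically and no monotonicity of $c\mapsto m(c,0)$ or concentration-compactness is needed. From there $E_0(u_*)=\lim\sigma(c,\mu)\le m(c,0)$, the reverse inequality is immediate since $u_*\in\mathcal P_{c,0}$, and uniqueness of the positive radial ground state identifies $u_*=u_0$. Your Brezis--Lieb decomposition is not wrong, but it creates work (the $c_*<c$ scenario) that the sign of the Lagrange multiplier makes unnecessary.
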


In the case $(b)$, we have the following results.

\begin{theorem}\label{the1-3}
Let $2+\frac{8s}{3}<q<p<2_{s}^{\ast}$ and $\mu>0$. Then $E_{\mu}$ has a critical point of Mountain Pass type $\hat{u}_{\mu}$ at some energy level $\sigma(c, \mu)>0$ with the following properties: $\hat{u}_{\mu}$ is a positive ground state, radially symmetric function, and solves \eqref{equ1-1} for some $\hat{\lambda}_{\mu}<0$.
\end{theorem}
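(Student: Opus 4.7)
\medskip

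\noindent\textbf{Proof proposal for Theorem \ref{the1-3}.}
The plan is to use the Pohozaev constraint and a mountain pass scheme along the $L^{2}$-preserving fiber $(t\star u)(x):=e^{3t/2}u(e^{t}x)$. A direct computation gives
\[
E_{\mu}(t\star u)=\frac{a}{2}e^{2st}\|(-\Delta)^{s/2}u\|_{2}^{2}+\frac{b}{4}e^{4st}\|(-\Delta)^{s/2}u\|_{2}^{4}-\frac{\mu}{q}e^{sq\vartheta_{s,q}t}\|u\|_{q}^{q}-\frac{1}{p}e^{sp\vartheta_{s,p}t}\|u\|_{p}^{p},
\]
so differentiating at $t=0$ naturally produces the Pohozaev functional
\[
P(u)=a\|(-\Delta)^{s/2}u\|_{2}^{2}+b\|(-\Delta)^{s/2}u\|_{2}^{4}-\mu\vartheta_{s,q}\|u\|_{q}^{q}-\vartheta_{s,p}\|u\|_{p}^{p},
\]
and the Pohozaev manifold $\mathcal{P}_{c}:=\{u\in M_{c}:P(u)=0\}$. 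Since by assumption $4<q\vartheta_{s,q}<p\vartheta_{s,p}$, the function $g_{u}(t):=E_{\mu}(t\star u)$ satisfies $g_{u}(t)\to 0^{+}$ as $t\to-\infty$ and $g_{u}(t)\to-\infty$ as $t\to+\infty$, and I will show by an elementary monotonicity argument on $g_{u}'$ that it has a unique critical point $t_{u}\in\mathbb{R}$, which is a strict global maximum, and that $t_{u}\star u\in\mathcal{P}_{c}$.

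I then define the mountain pass level
\[
\sigma(c,\mu):=\inf_{\gamma\in\Gamma}\max_{t\in[0,1]}E_{\mu}(\gamma(t)),\qquad \Gamma:=\{\gamma\in C([0,1],M_{c}):E_{\mu}(\gamma(0))<0,\ E_{\mu}(\gamma(1))<0,\ \gamma(1/2)\text{ suitable}\},
\]
and show via the fiber analysis that $\sigma(c,\mu)=\inf_{\mathcal{P}_{c}}E_{\mu}>0$; the lower bound follows because on $\mathcal{P}_{c}$ the Gagliardo-Nirenberg-Sobolev inequality and $q\vartheta_{s,q}>4$ force $\|(-\Delta)^{s/2}u\|_{2}$ to be bounded away from $0$. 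To recover compactness I work inside $H^{s}_{r}(\mathbb{R}^{3})$, and to ensure a Palais-Smale sequence also satisfies $P(u_{n})\to 0$ I apply the Jeanjean-Ghoussoub min-max framework on the augmented functional $\widetilde E_{\mu}(t,u):=E_{\mu}(t\star u)$ on $\mathbb{R}\times H^{s}_{r}(\mathbb{R}^{3})$; this yields a sequence $\{u_{n}\}\subset M_{c}\cap H^{s}_{r}(\mathbb{R}^{3})$ with $E_{\mu}(u_{n})\to\sigma(c,\mu)$, $P(u_{n})\to 0$ and $(E_{\mu}|_{M_{c}})'(u_{n})\to 0$.

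Boundedness of $\{u_{n}\}$ follows from the combination $2E_{\mu}(u_{n})-\frac{2}{q\vartheta_{s,q}}P(u_{n})$, in which every term carries a favorable sign thanks to $4<q\vartheta_{s,q}<p\vartheta_{s,p}$, so this linear combination controls $\|(-\Delta)^{s/2}u_{n}\|_{2}^{2}$. Passing to a subsequence, $u_{n}\rightharpoonup \hat{u}_{\mu}$ in $H^{s}_{r}(\mathbb{R}^{3})$ and, by the compactness of $H^{s}_{r}(\mathbb{R}^{3})\hookrightarrow L^{\alpha}(\mathbb{R}^{3})$ for $\alpha\in(2,2_{s}^{*})$, $u_{n}\to\hat{u}_{\mu}$ strongly in $L^{q}\cap L^{p}$. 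Writing the Lagrange multiplier formula
\[
\hat{\lambda}_{\mu}c^{2}=\bigl(a+b\|(-\Delta)^{s/2}u_{n}\|_{2}^{2}\bigr)\|(-\Delta)^{s/2}u_{n}\|_{2}^{2}-\mu\|u_{n}\|_{q}^{q}-\|u_{n}\|_{p}^{p}+o(1),
\]
combined with $P(u_{n})\to 0$ gives, after elimination of the $L^{q},L^{p}$ terms,
\[
\hat{\lambda}_{\mu}c^{2}=-\mu(1-\vartheta_{s,q})\|\hat{u}_{\mu}\|_{q}^{q}-(1-\vartheta_{s,p})\|\hat{u}_{\mu}\|_{p}^{p}+o(1)<0,
\]
so $\hat{\lambda}_{\mu}<0$; this negativity is then used to upgrade weak to strong convergence (the term $\hat\lambda_\mu\|u_n-\hat u_\mu\|_2^2$ carries the right sign to force $\|(-\Delta)^{s/2}(u_{n}-\hat{u}_{\mu})\|_{2}\to 0$).

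Finally, positivity and radial symmetry follow from the observation that $E_{\mu}$ and $P$ are both decreased (or left invariant) under Schwarz symmetrization, so along any minimizing Pohozaev sequence we can replace $u_{n}$ by $|u_{n}|^{*}$; combined with $\hat\lambda_\mu<0$ and the maximum principle for $(-\Delta)^{s}+|\hat\lambda_\mu|$, the limit $\hat{u}_{\mu}$ is strictly positive. The ground state property comes from the identity $\sigma(c,\mu)=\inf_{\mathcal{P}_{c}}E_{\mu}$ together with the fact that every critical point of $E_{\mu}|_{M_{c}}$ lies on $\mathcal{P}_{c}$ via the fractional Pohozaev identity. I expect the main technical obstacle to be the strong-convergence step: the Kirchhoff coefficient $a+b\|(-\Delta)^{s/2}u_{n}\|_{2}^{2}$ couples the leading term to the weakly-converging quantity $\|(-\Delta)^{s/2}u_{n}\|_{2}^{2}$, so the Brezis-Lieb-type splitting has to be performed carefully and the identity $\lim b\|(-\Delta)^{s/2}u_{n}\|_{2}^{2}=b\|(-\Delta)^{s/2}\hat{u}_{\mu}\|_{2}^{2}$ must be extracted from the combination of $P(u_{n})\to 0$, $\hat\lambda_\mu<0$, and the strong $L^{q},L^{p}$ convergence before concluding.
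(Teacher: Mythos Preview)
Your overall architecture matches the paper's: fiber map $t\mapsto t\star u$, Pohozaev manifold $\mathcal{P}_{c,\mu}$, the identification $\sigma(c,\mu)=\inf_{\mathcal{P}_{c,\mu}}E_{\mu}>0$, a Palais--Smale sequence in $M_{c,r}$ with the extra information $P_{\mu}(u_{n})\to 0$, and then the compactness argument that you outline (which is exactly Proposition~\ref{lem3-1} of the paper, including the sign computation $\hat\lambda_\mu c^{2}=\mu(\vartheta_{s,q}-1)\|u\|_q^q+(\vartheta_{s,p}-1)\|u\|_p^p<0$ and the test with $u_n-\hat u_\mu$). The one substantive technical difference is in how the constrained Palais--Smale sequence with $P_{\mu}(u_n)\to 0$ is produced: the paper applies Ghoussoub's abstract linking principle (their Lemma~\ref{lem2-7}) on $M_{c,r}$ with $F=\mathcal{P}_{c,\mu}$ and boundary $B=\overline{A_{k}}\cup E_{\mu}^{0}$, verifying the intersection and level conditions directly, whereas you invoke Jeanjean's augmented functional $\widetilde E_\mu(t,u)=E_\mu(t\star u)$ on $\mathbb{R}\times H^{s}_{r}$. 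Both devices are standard and yield the same output; Ghoussoub's version has the advantage that the ``$\mathrm{dist}(u_n,\mathcal{P}_{c,\mu})\to 0$'' conclusion is immediate from the statement, while Jeanjean's version is perhaps more widely known and avoids checking the intersection property $\gamma([0,1])\cap\mathcal{P}_{c,\mu}\neq\emptyset$ explicitly.

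One point to clean up: your written class $\Gamma=\{\gamma: E_\mu(\gamma(0))<0,\ E_\mu(\gamma(1))<0,\ \gamma(1/2)\text{ suitable}\}$ is not usable as stated, since in the purely $L^2$-supercritical regime $E_\mu(t\star u)\to 0^{+}$ as $t\to-\infty$ and one cannot anchor the left endpoint at a negative level; the paper's class takes $\gamma(0)\in\overline{A_k}$ (small positive energy) and $E_\mu(\gamma(1))\le 0$. If you go through the Jeanjean route this is automatically handled by the augmented geometry, so just drop the ad hoc $\Gamma$ and work directly with $\widetilde E_\mu$. Also, your symmetrization step (replace $u_n$ by $|u_n|^{*}$ along a minimizing sequence on $\mathcal{P}_{c,\mu}$) is a clean way to secure nonnegativity and radiality before passing to the limit; the paper instead works in $M_{c,r}$ from the outset and quotes the strong maximum principle argument from its proof of Theorem~\ref{the1-1}(1) for strict positivity, which is the same endgame you describe.
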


In addition, we also are concerned with the behavior of the ground states in Theorem \ref{the1-3} as $\mu\rightarrow0^{+}$.
\begin{theorem}\label{the1-4}
Under the assumptions of Theorem \ref{the1-3}. Then $\sigma(c, \mu)\rightarrow m(c, 0)$ and $\hat{u}_{\mu}\rightarrow u_{0}$ in $H^{s}(\mathbb{R}^{3})$ as $\mu\rightarrow0^{+}$, where $m(c, 0)=E_{\mu}(u_{0})$ and $u_{0}$ is the unique ground state of $E_{0}|_{M_{c}}$.
\end{theorem}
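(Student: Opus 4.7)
The plan is to prove Theorem \ref{the1-4} in three intertwined steps: establish the two-sided bound forcing $\sigma(c,\mu)\to m(c,0)$, derive uniform $H^s$-bounds on $\{\hat u_\mu\}$ together with a quantitative negativity bound on $\{\hat\lambda_\mu\}$, and carry out a compactness argument in $H^s_r(\mathbb{R}^3)$ that identifies the limit with $u_0$ via uniqueness.

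For the upper bound I would exploit the pointwise relation $E_\mu(u)=E_0(u)-\frac{\mu}{q}\|u\|_q^q\le E_0(u)$ on $M_c$: inserting this into the mountain-pass minimax characterization from the proof of Theorem \ref{the1-3} and testing along an almost-optimal path for $E_0$ yields $\limsup_{\mu\to 0^+}\sigma(c,\mu)\le m(c,0)$; the matching lower bound will follow \emph{a posteriori} from the compactness argument. To control $\{\hat u_\mu\}$ uniformly I would combine $\sigma(c,\mu)\le m(c,0)+1$ (valid for small $\mu$) with the Pohozaev identity for solutions of \eqref{equ1-1},
\begin{equation*}
sa\|(-\Delta)^{\frac{s}{2}}\hat u_\mu\|_2^2+sb\|(-\Delta)^{\frac{s}{2}}\hat u_\mu\|_2^4=\frac{3\mu(q-2)}{2q}\|\hat u_\mu\|_q^q+\frac{3(p-2)}{2p}\|\hat u_\mu\|_p^p,
\end{equation*}
and the Gagliardo-Nirenberg inequality. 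In the $L^2$-supercritical regime where $q\vartheta_{s,q},p\vartheta_{s,p}>4$, these ingredients pin $\|(-\Delta)^{\frac{s}{2}}\hat u_\mu\|_2$ in a bounded interval independent of $\mu$. Testing the equation against $\hat u_\mu$ and subtracting a suitable multiple of the Pohozaev identity gives
\begin{equation*}
\hat\lambda_\mu c^2=-\mu(1-\vartheta_{s,q})\|\hat u_\mu\|_q^q-(1-\vartheta_{s,p})\|\hat u_\mu\|_p^p,
\end{equation*}
which together with a uniform lower bound on $\|\hat u_\mu\|_p$ inherited from the strictly positive mountain-pass level yields the strict negativity $\hat\lambda_\mu\le-\delta<0$ for $\mu$ small.

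By the radial compact embedding $H^s_r(\mathbb{R}^3)\hookrightarrow L^r(\mathbb{R}^3)$, $r\in(2,2_s^*)$, along a subsequence $\hat u_{\mu_n}\rightharpoonup u_*$ weakly in $H^s$, strongly in $L^p\cap L^q$, and $\hat\lambda_{\mu_n}\to\lambda_*\le-\delta<0$. Passing to the limit in the equation shows that $u_*$ solves the $\mu=0$ problem with Lagrange multiplier $\lambda_*$. A Brezis-Lieb splitting at the $\dot H^s$- and $L^2$-levels combined with $\lambda_*<0$ precludes vanishing of the $L^2$-mass and upgrades weak convergence to strong $H^s$-convergence; in particular $\|u_*\|_2=c$, the limiting energy identity gives $E_0(u_*)=\liminf_{\mu\to 0^+}\sigma(c,\mu)\ge m(c,0)$ (closing the two-sided bound), and uniqueness of $u_0$ forces $u_*=u_0$, promoting subsequential convergence to convergence of the whole family.

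The main obstacle will be the passage to the limit in the nonlocal Kirchhoff coefficient $b\|(-\Delta)^{\frac{s}{2}}\hat u_\mu\|_2^2$: weak $H^s$-convergence alone does not imply its convergence to $b\|(-\Delta)^{\frac{s}{2}}u_*\|_2^2$. I plan to overcome this by subtracting the Pohozaev identities of $\hat u_{\mu_n}$ and of $u_*$: the strong $L^p$ and $L^q$ convergences eliminate the right-hand sides and leave a coercive expression in $\|(-\Delta)^{\frac{s}{2}}(\hat u_{\mu_n}-u_*)\|_2$ (thanks to $q\vartheta_{s,q},p\vartheta_{s,p}>4$), which is forced to vanish in the limit and thereby restores the desired convergence of the Kirchhoff coefficient.
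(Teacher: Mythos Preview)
Your overall strategy is the same one the paper uses (the authors simply refer back to the proof of Theorem~\ref{the1-2}-(2)): bound $\sigma(c,\mu)$ from above by $m(c,0)$ via $E_\mu\le E_0$, use $P_\mu(\hat u_\mu)=0$ together with the energy bound to get uniform $H^s$-bounds and negativity of the multiplier, pass to a weak limit in $H^s_r$, upgrade to strong $H^s$-convergence, and conclude by uniqueness of $u_0$. Where your outline deviates is in the treatment of the Kirchhoff coefficient, and there your description is inaccurate in a way that matters.

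When you pass to the limit in the equation you do \emph{not} obtain ``the $\mu=0$ problem'': the weak limit only yields
\[
(a+bB)(-\Delta)^s u_* = \lambda_* u_* + |u_*|^{p-2}u_*,\qquad B:=\lim_n\|(-\Delta)^{s/2}\hat u_{\mu_n}\|_2^2\ge \|(-\Delta)^{s/2}u_*\|_2^2=:A,
\]
i.e.\ a \emph{frozen-coefficient} equation, and the correct Pohozaev identity for $u_*$ reads $(a+bB)A=\vartheta_{s,p}\|u_*\|_p^p$. Subtracting this from the limit of $P_{\mu_n}(\hat u_{\mu_n})=0$ gives $(a+bB)(B-A)=0$, hence $B=A$. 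This is an identity in the norms, not ``a coercive expression in $\|(-\Delta)^{s/2}(\hat u_{\mu_n}-u_*)\|_2$'' as you write; it buys you $\|(-\Delta)^{s/2}\hat u_{\mu_n}\|_2\to\|(-\Delta)^{s/2}u_*\|_2$, which together with weak convergence gives strong $\dot H^s$-convergence. Your outline still leaves the strong $L^2$-convergence unaccounted for; the Brezis--Lieb step you invoke does not by itself deliver it. You can recover it by testing the two equations with $\hat u_{\mu_n}$ and $u_*$ respectively and subtracting: once $B=A$ this yields $-\lambda_*(c^2-\|u_*\|_2^2)=0$, and since $\lambda_*<0$ you get $\|u_*\|_2=c$ and hence strong $L^2$-convergence.

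The paper's route is shorter: it keeps the frozen coefficient $B$ and tests the \emph{difference} of the two equations directly against $\hat u_\mu-\hat u$, obtaining
\[
(a+bB)\|(-\Delta)^{s/2}(\hat u_\mu-\hat u)\|_2^2 \; - \; \hat\lambda\,\|\hat u_\mu-\hat u\|_2^2 \;\to\;0,
\]
which, since $\hat\lambda<0$, forces strong $H^s$-convergence in one stroke (and then $B=A$ follows a posteriori). Your Pohozaev-subtraction variant is legitimate but requires the extra $L^2$-step and the correct identification of the limit equation; you should revise those two points.
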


\begin{remark}\label{the1-5}
We should point out that Luo and Zhang in \cite{LLLL} considered the fractional Schr\"{o}dinger equation with combined nonlinearities and proved the existence and nonexistence of normalized solutions. Compared with the fractional Schr\"{o}dinger equation, the fractional Kirchhoff equation is more difficult since the corresponding fiber map $\mathcal{J}_{u}^{\mu}(\tau)$ has four different terms (see \eqref{equ1-11} below).
\end{remark}
\begin{remark}\label{the1-6}
Very recently, Li, Luo and Yang \cite{LLLLLL} proved the existence and asymptotic properties of solutions to the Kirchhoff equation. Compared to the Laplacian problems, the fractional Laplacian problems are nonlocal and more challenging. In fact, when we consider the fractional Laplacian problem, the corresponding algebraic equation is about fractional order, which is more complicated to deal with than an integer-order algebraic equation.
\end{remark}

To prove our main results, a Pohozaev set needs to be defined by
\begin{equation}\label{equ1-9}
\mathcal{P}_{c, \mu}=\{u\in M_{c}: P_{\mu}(u)=0\},
\end{equation}
where
\begin{equation}\label{equ1-10}
P_{\mu}(u)=a\|(-\Delta)^{\frac{s}{2}}u\|_{2}^{2}+b\|(-\Delta)^{\frac{s}{2}}u\|_{2}^{4}-\mu\vartheta_{s, q}\|u\|_{q}^{q}-\vartheta_{s, p}\|u\|_{p}^{p}.
\end{equation}
Clearly, Lemma \ref{lem2-3} implies that any critical points of $E_{\mu}|_{M_{c}}$ stay in $\mathcal{P}_{c, \mu}$. Moreover, for $u\in M_{c}$ and $\tau\in\mathbb{R}$, let
\begin{equation*}
(\tau\star u)(x)=e^{\frac{3}{2}\tau}u(e^{\tau}x), \quad \hbox{ a.e. in $\mathbb{R}^{3}$,}
\end{equation*}
then $\tau\star u\in M_{c}$. Hence we introduce the fiber map
\begin{equation}\label{equ1-11}
\begin{split}
\mathcal{J}_{u}^{\mu}(\tau):&=E_{\mu}(\tau\star u)
=\frac{ae^{2s\tau}}{2}\|(-\Delta)^{\frac{s}{2}}u\|_{2}^{2}+\frac{be^{4s\tau}}{4}\|(-\Delta)^{\frac{s}{2}}u\|_{2}^{4}\\
&-\mu\frac{e^{3\tau(\frac{q}{2}-1)}}{q}\|u\|_{q}^{q}-\frac{e^{3\tau(\frac{p}{2}-1)}}{p}\|u\|_{p}^{p}.
\end{split}
\end{equation}

It is easy to check that $(\mathcal{J}_{u}^{\mu})'(\tau)=sP_{\mu}(\tau\star u)$, $0<s<1$. So that $\tau$ is a critical point of $\mathcal{J}_{u}^{\mu}$ if and only if $\tau\star u\in \mathcal{P}_{c, \mu}$. Clearly, $u\in \mathcal{P}_{c, \mu}$ if and only if $0$ is a critical point of $\mathcal{J}_{u}^{\mu}$. Thus, we consider to decomposite $\mathcal{P}_{c, \mu}$ into the disjoint union $\mathcal{P}_{c, \mu}=\mathcal{P}_{c, \mu}^{+}\bigcup\mathcal{P}_{c, \mu}^{0}\bigcup\mathcal{P}_{c, \mu}^{-}$, where
\begin{equation}\label{equ1-12}
\begin{split}
\mathcal{P}_{c, \mu}^{+}&=\{u\in \mathcal{P}_{c, \mu}: (\mathcal{J}_{u}^{\mu})''(0)>0\}\\
&=\{u\in \mathcal{P}_{c, \mu}: 2a\|(-\Delta)^{\frac{s}{2}}u\|_{2}^{2}+4b\|(-\Delta)^{\frac{s}{2}}u\|_{2}^{4}>\mu q\vartheta^{2}_{s, q}\|u\|_{q}^{q}+p\vartheta^{2}_{s, p}\|u\|_{p}^{p}\};
\end{split}
\end{equation}
\begin{equation}\label{equ1-13}
\begin{split}
\mathcal{P}_{c, \mu}^{0}&=\{u\in \mathcal{P}_{c, \mu}: (\mathcal{J}_{u}^{\mu})''(0)=0\}\\
&=\{u\in \mathcal{P}_{c, \mu}: 2a\|(-\Delta)^{\frac{s}{2}}u\|_{2}^{2}+4b\|(-\Delta)^{\frac{s}{2}}u\|_{2}^{4}=\mu q\vartheta^{2}_{s, q}\|u\|_{q}^{q}+p\vartheta^{2}_{s, p}\|u\|_{p}^{p}\};
\end{split}
\end{equation}
\begin{equation}\label{equ1-14}
\begin{split}
\mathcal{P}_{c, \mu}^{-}&=\{u\in \mathcal{P}_{c, \mu}: (\mathcal{J}_{u}^{\mu})''(0)<0\}\\
&=\{u\in \mathcal{P}_{c, \mu}: 2a\|(-\Delta)^{\frac{s}{2}}u\|_{2}^{2}+4b\|(-\Delta)^{\frac{s}{2}}u\|_{2}^{4}<\mu q\vartheta^{2}_{s, q}\|u\|_{q}^{q}+p\vartheta^{2}_{s, p}\|u\|_{p}^{p}\}.
\end{split}
\end{equation}

In the sequel, we use the following notations:
{\small

\begin{itemize}

\item
$C$ and $C_{i}$ denotes a universal positive constant.

\item
For $1\leq p<\infty$ and $u\in L^{p}(\mathbb{R}^{3})$, we denote $\|u\|_{p}=(\int_{\mathbb{R}^{3}}|u|^{p}dx)^{\frac{1}{p}}$.

\item
$M_{c, r}:=H^{s}_{r}\cap M_{c}=\{u\in H^{s}_{r}(\mathbb{R}^{3}): \|u\|_{2}^{2}=c^{2}\}$.

\item
Set
\begin{equation*}
D^{s,2}(\mathbb{R}^{3}):=\{u\in L^{2^{\ast}_{s}}(\mathbb{R}^{3}):(-\Delta)^{\frac{s}{2}}u\in L^{2}(\mathbb{R}^{3})\}
\end{equation*}
denoting the homogeneous fractional sobolev space with the norm
\begin{equation*}
\|u\|^{2}_{D^{s,2}}:=\int_{\mathbb{R}^{3}}|(-\Delta)^{\frac{s}{2}}u|^{2}dx.
\end{equation*}

\item
Denote by $^{\ast}$ the symmetric decreasing rearrangement of a $H^{s}(\mathbb{R}^{3})$ function.

\item
The symbol $\rightharpoonup$ denotes weak convergence and the symbol $\rightarrow$ denotes strong convergence.

\item
The symbol $\langle\cdot,\cdot \rangle$ denotes the dual pair for any Banach space and its dual space.

\end{itemize}

The paper is organized as follows. In Section 2, we present some preliminaries results. In Section 3, we establish a compactness lemma. Section 4 is devoted to the mixed critical case and we will prove Theorem \ref{the1-1} and Theorem \ref{the1-2}. In Section 5, we study the purely $L^{2}$-supercritical case and prove Theorem \ref{the1-3} and Theorem \ref{the1-4}.

\section{preliminary lemmas}
In this section, we give some lemmas which will be often used throughout the rest of the paper. First, we give the fractional Gagliardo-Nirenberg inequality.

\begin{lemma}\label{lem2-1}\cite{LLLL}
If $\alpha\in(2, 2_{s}^{\ast})$, there exists an optimal constant $C(s, \alpha)$ such that
\begin{equation}\label{equ2-1}
\|u\|_{\alpha}\leq C(s, \alpha)\|(-\Delta)^{\frac{s}{2}}u\|_{2}^{\vartheta_{s, \alpha}}\|u\|_{2}^{1-\vartheta_{s, \alpha}} \quad \hbox{  $\forall u\in H^{s}(\mathbb{R}^{3})$,}
\end{equation}
where $\vartheta_{s, \alpha}$ is defined by $\vartheta_{s, \alpha}=\frac{3(\alpha-2)}{2s\alpha}$.

\end{lemma}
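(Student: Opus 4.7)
The plan is to split the statement into two parts: first, to establish the inequality with a finite constant and the claimed exponent $\vartheta_{s,\alpha}$; and second, to show that the sharp constant $C(s,\alpha)$ is actually attained. Since Lemma \ref{lem2-1} is cited from \cite{LLLL}, the goal is to outline how one reconstructs the argument in $\mathbb{R}^{3}$.

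For the first part, I would start with a scaling check that pins down the exponent $\vartheta_{s,\alpha}$. Letting $u_{\lambda}(x):=u(\lambda x)$ with $\lambda>0$, a direct computation gives $\|u_{\lambda}\|_{\alpha}=\lambda^{-3/\alpha}\|u\|_{\alpha}$, $\|u_{\lambda}\|_{2}=\lambda^{-3/2}\|u\|_{2}$, and $\|(-\Delta)^{\frac{s}{2}}u_{\lambda}\|_{2}=\lambda^{s-3/2}\|(-\Delta)^{\frac{s}{2}}u\|_{2}$, so any estimate of the form $\|u\|_{\alpha}\le C\|(-\Delta)^{\frac{s}{2}}u\|_{2}^{\theta}\|u\|_{2}^{1-\theta}$ is homogeneous in $\lambda$ only when $\theta=\frac{3(\alpha-2)}{2s\alpha}=\vartheta_{s,\alpha}$. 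With the exponent identified, the inequality itself follows by interpolation. Choose $\theta\in(0,1)$ with $\frac{1}{\alpha}=\frac{1-\theta}{2}+\frac{\theta}{2_{s}^{\ast}}$; a short computation confirms $\theta=\vartheta_{s,\alpha}$, and H\"older gives $\|u\|_{\alpha}\le\|u\|_{2}^{1-\theta}\|u\|_{2_{s}^{\ast}}^{\theta}$. Combining this with the fractional Sobolev embedding $\|u\|_{2_{s}^{\ast}}\le S\|(-\Delta)^{\frac{s}{2}}u\|_{2}$ yields the inequality with constant $S^{\vartheta_{s,\alpha}}$.

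For the second part, I would show that the optimal constant
\[
C(s,\alpha)^{-1}:=\inf_{u\in H^{s}(\mathbb{R}^{3})\setminus\{0\}}\frac{\|(-\Delta)^{\frac{s}{2}}u\|_{2}^{\vartheta_{s,\alpha}}\|u\|_{2}^{1-\vartheta_{s,\alpha}}}{\|u\|_{\alpha}}
\]
is attained. Since the quotient is invariant under scalar multiplication, under the dilations $u\mapsto u(\lambda\cdot)$, and under translations, a minimizing sequence must be treated carefully. The natural reduction is to replace each term of a minimizing sequence by its symmetric decreasing rearrangement $u^{\ast}$: rearrangement preserves $\|u\|_{2}$ and $\|u\|_{\alpha}$, while the fractional P\'olya--Szeg\H{o} inequality ensures $\|(-\Delta)^{\frac{s}{2}}u^{\ast}\|_{2}\le\|(-\Delta)^{\frac{s}{2}}u\|_{2}$, so one obtains a minimizing sequence in $H^{s}_{r}(\mathbb{R}^{3})$. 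After using the scaling invariances to normalize $\|u_{n}\|_{2}=\|(-\Delta)^{\frac{s}{2}}u_{n}\|_{2}=1$, one extracts a weakly convergent subsequence $u_{n}\rightharpoonup u_{0}$ in $H^{s}_{r}(\mathbb{R}^{3})$, passes to the limit in the denominator via the compact embedding $H^{s}_{r}(\mathbb{R}^{3})\hookrightarrow L^{\alpha}(\mathbb{R}^{3})$ (valid for $\alpha\in(2,2_{s}^{\ast})$), and uses weak lower semicontinuity for the numerator terms. This forces $u_{0}\neq 0$ and shows $u_{0}$ realizes the infimum.

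The main obstacle is precisely this compactness step: the embedding $H^{s}(\mathbb{R}^{3})\hookrightarrow L^{\alpha}(\mathbb{R}^{3})$ is not compact on the full space due to translation and dilation invariance, so the symmetrization together with the normalization exhausting the scaling freedom is the essential ingredient to rule out vanishing and recover a nontrivial minimizer. Once this is done, the optimal constant $C(s,\alpha)$ is well-defined and the stated inequality holds with equality on the extremal function.
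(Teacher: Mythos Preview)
Your outline is correct: the scaling computation pins down the exponent $\vartheta_{s,\alpha}$, the H\"older--Sobolev interpolation yields a finite constant, and the symmetrization plus compact radial embedding argument recovers the extremal. The only point to be slightly careful with is the last step: after normalizing $\|u_{n}\|_{2}=\|(-\Delta)^{\frac{s}{2}}u_{n}\|_{2}=1$ and passing to the weak limit, you get $\|u_{0}\|_{2}\le 1$ and $\|(-\Delta)^{\frac{s}{2}}u_{0}\|_{2}\le 1$ while $\|u_{0}\|_{\alpha}=C(s,\alpha)>0$, so the quotient at $u_{0}$ is $\le C(s,\alpha)^{-1}$ and hence equals the infimum; this forces equality in the two lower-semicontinuity inequalities as well, which is worth stating explicitly.

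As for comparison with the paper: there is nothing to compare. The paper does not prove Lemma~\ref{lem2-1}; it is stated with a citation to \cite{LLLL} and used as a black box. Your proposal is a self-contained reconstruction of the classical argument (Weinstein's approach adapted to the fractional setting), and it supplies exactly what the paper omits.
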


Now, we recall the Pohozaev identity for fractional scalar field equations.
\begin{lemma}\label{lem2-2}\cite{LLLL}
Let $a, b>0$ and $u\in H^{s}(\mathbb{R}^{3})$ satisfies the equation
\begin{equation*}
\left(a+b\int_{\mathbb{R}^{3}}|(-\Delta)^{\frac{s}{2}}u|^{2}dx\right)(-\Delta)^{s}u=g(u),
\end{equation*}
then it holds that
\begin{equation*}
a\frac{3-2s}{2}\int_{\mathbb{R}^{3}}|(-\Delta)^{\frac{s}{2}}u|^{2}dx+b\frac{3-2s}{2}\left(\int_{\mathbb{R}^{3}}|(-\Delta)^{\frac{s}{2}}u|^{2}dx\right)^{2}=3\int_{\mathbb{R}^{3}}G(u)dx,
\end{equation*}
where $G(s)=\int_{0}^{s}g(t)dt$.
\end{lemma}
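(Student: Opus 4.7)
The plan is to reduce the Kirchhoff Pohozaev identity to the classical Pohozaev identity for the fractional scalar field equation by exploiting the fact that, for a fixed solution $u$, the Kirchhoff coefficient $a+b\int_{\mathbb{R}^3}|(-\Delta)^{\frac{s}{2}}u|^2\,dx$ is merely a positive number. First I would set $A:=\int_{\mathbb{R}^3}|(-\Delta)^{\frac{s}{2}}u|^2\,dx$ and rewrite the hypothesis as $(-\Delta)^{s}u=\tilde g(u)$ with $\tilde g(u):=g(u)/(a+bA)$ and primitive $\tilde G(u)=G(u)/(a+bA)$. This converts the nonlocal Kirchhoff problem into an ordinary fractional semilinear equation, to which the standard fractional Pohozaev identity applies.

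Next I would invoke (or reprove) the classical identity $\frac{3-2s}{2}\int_{\mathbb{R}^3}|(-\Delta)^{\frac{s}{2}}u|^2\,dx = 3\int_{\mathbb{R}^3}\tilde G(u)\,dx$ for solutions of $(-\Delta)^s u=\tilde g(u)$. This can be obtained in two natural ways. (i) Via the Caffarelli-Silvestre extension: lift $u$ to its $s$-harmonic extension $U$ on $\mathbb{R}^{4}_{+}$, test the degenerate elliptic equation $-\mathrm{div}(y^{1-2s}\nabla U)=0$ against the multiplier $x\cdot\nabla_{x}U+\frac{3-2s}{2}U$ on a half-ball of radius $R$, and let $R\to\infty$, using the boundary condition relating $\lim_{y\to 0^{+}}y^{1-2s}U_{y}$ to $\tilde g(u)$. (ii) Directly in $\mathbb{R}^{3}$ by scaling: setting $u_{\tau}(x):=u(\tau x)$ one has $\|(-\Delta)^{\frac{s}{2}}u_{\tau}\|_{2}^{2}=\tau^{2s-3}A$ and $\int_{\mathbb{R}^3}G(u_{\tau})\,dx=\tau^{-3}\int_{\mathbb{R}^3}G(u)\,dx$, so computing $\frac{d}{d\tau}|_{\tau=1}$ of the associated energy along the orbit $\{u_{\tau}\}$ and using that $u$ solves the Euler-Lagrange equation yields the identity after a one-line algebraic step. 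Finally, multiplying the resulting scalar Pohozaev identity through by $(a+bA)$ restores the nonlocal coefficient and produces exactly the stated equality, with the linear term $a\frac{3-2s}{2}A$ and the quadratic term $b\frac{3-2s}{2}A^2$.

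The main obstacle is justifying the manipulations rigorously: both variants require enough smoothness and decay of $u$ so that all boundary contributions vanish and the multiplier $x\cdot\nabla u$ can be paired with $(-\Delta)^{s}u$. Typically one needs $u\in H^{2s}_{\mathrm{loc}}$, polynomial decay at infinity, and $G(u)\in L^{1}(\mathbb{R}^{3})$. Under the structural nonlinearities that arise in our applications, namely $g(u)=\lambda u+\mu|u|^{q-2}u+|u|^{p-2}u$ with $2<q<p<2^{\ast}_{s}$, these properties follow from a standard bootstrap: weak solutions in $H^{s}(\mathbb{R}^{3})$ belong to $L^{\infty}(\mathbb{R}^{3})$ via Moser iteration on the extension problem, are smooth in regions where $g$ is smooth, and decay at infinity by the Riesz representation of $(-\Delta)^{-s}$ applied to the right-hand side. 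Once these regularity/decay facts are in hand the algebraic reduction above yields the claim immediately, which is why the lemma can be borrowed from \cite{LLLL} with only a cosmetic change (the inclusion of the $bA^{2}$ term).
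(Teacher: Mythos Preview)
Your reduction is correct and is exactly the natural route: for a fixed solution the Kirchhoff coefficient $a+bA$ is a positive constant, so the equation becomes an ordinary fractional scalar field equation and the standard Pohozaev identity $\frac{3-2s}{2}\|(-\Delta)^{s/2}u\|_{2}^{2}=3\int_{\mathbb{R}^{3}}\tilde G(u)\,dx$ applies; multiplying through by $a+bA$ recovers the stated formula. Note, however, that the paper does not supply its own proof of this lemma at all---it simply quotes the result from \cite{LLLL}, where the identity is established for the fractional Schr\"odinger equation (the case $b=0$). Your observation that the extension to $b>0$ is cosmetic is precisely why the authors feel entitled to cite \cite{LLLL} directly. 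So there is nothing to compare against in the present paper; your argument is what the citation implicitly stands for, and your discussion of the regularity and decay needed to justify the multiplier computation is an appropriate addition that the paper omits.
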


\begin{lemma}\label{lem2-3}\cite{LLLL}
Let $s\in(0, 1)$. For any $u\in H^{s}(\mathbb{R}^{3})$, the following inequality holds
\begin{equation*}
\int\int_{\mathbb{R}^{6}}\frac{(u^{\ast}(x)-u^{\ast}(y))^{2}}{|x-y|^{3+2s}}dxdy\leq \int\int_{\mathbb{R}^{6}}\frac{(u(x)-u(y))^{2}}{|x-y|^{3+2s}}dxdy.
\end{equation*}
\end{lemma}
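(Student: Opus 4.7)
This is the classical fractional Pólya--Szegő inequality, and the cleanest route is via the heat semigroup together with the Riesz rearrangement inequality. The plan is to represent the Gagliardo seminorm as a subordinated integral of the heat semigroup and then apply a standard rearrangement inequality to the convolution pairing.

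\textbf{Step 1: heat kernel representation of the seminorm.} Let $p_{t}(x)=(4\pi t)^{-3/2}e^{-|x|^{2}/(4t)}$ be the Gaussian heat kernel on $\mathbb{R}^{3}$ and let $P_{t}u=p_{t}\ast u$. Using $\int p_{t}(x-y)\,dy=1$, one has the symmetric identity
\begin{equation*}
\|u\|_{2}^{2}-\langle u,P_{t}u\rangle=\tfrac{1}{2}\iint_{\mathbb{R}^{6}}p_{t}(x-y)\,(u(x)-u(y))^{2}\,dx\,dy .
\end{equation*}
The Bochner subordination formula gives a constant $c_{s}>0$ such that
\begin{equation*}
|x-y|^{-(3+2s)}=c_{s}\int_{0}^{\infty}t^{-1-s}\,p_{t}(x-y)\,dt .
\end{equation*}
Multiplying the first identity by $c_{s}t^{-1-s}$, integrating in $t\in(0,\infty)$ and using Fubini, I would obtain
\begin{equation*}
\iint_{\mathbb{R}^{6}}\frac{(u(x)-u(y))^{2}}{|x-y|^{3+2s}}\,dx\,dy
=2c_{s}\int_{0}^{\infty}t^{-1-s}\bigl(\|u\|_{2}^{2}-\langle u,P_{t}u\rangle\bigr)\,dt .
\end{equation*}

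\textbf{Step 2: Riesz rearrangement on the convolution pairing.} Since $p_{t}$ is a radially symmetric, strictly decreasing function equal to its own symmetric decreasing rearrangement, the Riesz rearrangement inequality yields, for every $t>0$,
\begin{equation*}
\langle u,P_{t}u\rangle=\iint u(x)\,p_{t}(x-y)\,u(y)\,dx\,dy
\le\iint u^{\ast}(x)\,p_{t}(x-y)\,u^{\ast}(y)\,dx\,dy=\langle u^{\ast},P_{t}u^{\ast}\rangle .
\end{equation*}
Because rearrangement preserves the $L^{2}$ norm, $\|u\|_{2}^{2}=\|u^{\ast}\|_{2}^{2}$, so the integrand $t^{-1-s}(\|v\|_{2}^{2}-\langle v,P_{t}v\rangle)$ pointwise in $t$ decreases when passing from $v=u$ to $v=u^{\ast}$.

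\textbf{Step 3: conclusion.} Integrating the pointwise $t$-inequality and using the representation from Step~1 applied to both $u$ and $u^{\ast}$ gives the desired inequality
\begin{equation*}
\iint_{\mathbb{R}^{6}}\frac{(u^{\ast}(x)-u^{\ast}(y))^{2}}{|x-y|^{3+2s}}\,dx\,dy
\le\iint_{\mathbb{R}^{6}}\frac{(u(x)-u(y))^{2}}{|x-y|^{3+2s}}\,dx\,dy .
\end{equation*}

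\textbf{Main obstacle.} The conceptual content is entirely in Step~2, which is a clean application of Riesz. The only technical nuisance I expect is bookkeeping in Step~1: justifying Fubini (which needs $u\in H^{s}\cap L^{2}$ so that both sides are finite, with a standard approximation by functions in $\mathcal{S}(\mathbb{R}^{3})$) and pinning down the subordination constant $c_{s}$. Neither is serious. Alternatively one could invoke the Caffarelli--Silvestre extension, but proving a rearrangement inequality for the extended function is more delicate, so the heat-semigroup proof is preferable.
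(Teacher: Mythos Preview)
The paper does not supply its own proof of this lemma: it is simply quoted from \cite{LLLL} (and, ultimately, from the classical Pólya--Szeg\H{o} literature). So there is no ``paper's approach'' to compare against; your heat-semigroup argument is a standard and correct way to obtain the fractional Pólya--Szeg\H{o} inequality.

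One small point is worth tightening. In Step~2 you apply the Riesz rearrangement inequality directly to
\[
\langle u,P_{t}u\rangle=\iint u(x)\,p_{t}(x-y)\,u(y)\,dx\,dy,
\]
but Riesz requires the three functions in the triple convolution to be nonnegative. For sign-changing $u\in H^{s}(\mathbb{R}^{3})$ (with $u^{\ast}:=|u|^{\ast}$, as in the paper's notation), you should insert the trivial pointwise bound $u(x)u(y)\le|u(x)||u(y)|$ first, giving
\[
\langle u,P_{t}u\rangle\le\langle |u|,P_{t}|u|\rangle\le\langle |u|^{\ast},P_{t}|u|^{\ast}\rangle=\langle u^{\ast},P_{t}u^{\ast}\rangle,
\]
where only the second inequality uses Riesz. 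Combined with $\|u\|_{2}=\||u|\|_{2}=\|u^{\ast}\|_{2}$, the rest of your Steps~1 and~3 go through unchanged. In the paper's applications the lemma is in any case only invoked for $|v_{n}|$, so the issue is purely cosmetic.
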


\begin{lemma}\label{lem2-4}\cite{LLLL}
Let $N\geq2$, then $H_{r}^{s}(\mathbb{R}^{N})$ is compactly embedding into $L^{p}(\mathbb{R}^{N})$ for $p\in(2, 2_{s}^{\ast})$.
\end{lemma}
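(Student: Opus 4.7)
The plan is to follow the classical Strauss--Lions compactness scheme, with the fractional Rellich--Kondrachov theorem providing local compactness and a Strauss-type pointwise decay estimate for radial fractional Sobolev functions providing tightness at infinity.

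First I would take a bounded sequence $\{u_n\}\subset H^s_r(\mathbb{R}^N)$. Weak compactness of $H^s(\mathbb{R}^N)$ yields, up to a subsequence, $u_n\rightharpoonup u$ in $H^s(\mathbb{R}^N)$; since radial symmetry is preserved under weak limits, $u\in H^s_r(\mathbb{R}^N)$. Next, for each fixed $R>0$, the embedding $H^s(B_R)\hookrightarrow L^p(B_R)$ is compact for $p\in[1,2^{\ast}_s)$ by the fractional Rellich--Kondrachov theorem. A standard diagonal extraction then produces a subsequence, still denoted $\{u_n\}$, with $u_n\to u$ in $L^p(B_R)$ for every $R>0$ and every $p\in[2,2^{\ast}_s)$.

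The crucial step is to establish a Strauss-type decay estimate for radial $H^s$ functions when $N\ge 2$: there exist $\alpha=\alpha(N,s)>0$ and $C=C(N,s)>0$ such that every $v\in H^s_r(\mathbb{R}^N)$ satisfies
\[
|v(x)|\le C\,|x|^{-\alpha}\,\|v\|_{H^s(\mathbb{R}^N)}\qquad\text{for a.e.\ }|x|\ge 1.
\]
One accessible route is via the Caffarelli--Silvestre $s$-harmonic extension $V(x,y)$ recalled in the introduction: $V$ inherits radial symmetry in the $x$-variable, and by restricting to rays in the $(N{+}1)$-dimensional upper half-space with weight $y^{1-2s}$ one obtains a one-dimensional integral representation that yields polynomial decay of $V(\cdot,0)=v$. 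An alternative is a Besov/Littlewood--Paley characterisation of $H^s$ together with the classical Lions radial argument. With the decay estimate in hand, a simple interpolation gives
\[
\int_{|x|\ge R}|u_n|^p\,dx \;\le\; C\,R^{-\alpha(p-2)}\,\|u_n\|_{H^s}^{\,p-2}\,\|u_n\|_2^{2},
\]
which tends to $0$ uniformly in $n$ as $R\to\infty$. Combining this uniform tail estimate with the local strong convergence yields $u_n\to u$ in $L^p(\mathbb{R}^N)$ for every $p\in(2,2^{\ast}_s)$.

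The main obstacle is the third step, the fractional Strauss radial decay estimate: the classical proof for $H^1_r$ uses the fundamental theorem of calculus along rays, and this is unavailable for the nonlocal operator $(-\Delta)^{s/2}$. Either the Caffarelli--Silvestre extension or a harmonic-analytic characterisation of $H^s$ must be invoked to replace that one-dimensional computation. The hypothesis $N\ge 2$ is essential here: in dimension one, radial ($\emph{i.e.}$ even) functions in $H^s$ need not decay pointwise and the compact embedding fails. Once the decay estimate is secured, the remaining pieces (weak compactness, fractional Rellich on balls, tail interpolation) combine in a routine manner to finish the proof.
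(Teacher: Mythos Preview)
The paper does not supply a proof of this lemma at all: it is simply quoted from \cite{LLLL} (Luo--Zhang), so there is no ``paper's proof'' to compare against. Your outline is the standard Strauss--Lions scheme and is correct in spirit; it is, in fact, essentially how the result is established in the literature.

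One caveat worth noting: the pointwise radial decay estimate you state, $|v(x)|\le C|x|^{-\alpha}\|v\|_{H^s}$, is only available when $s>\tfrac12$ (so that radial $H^s$ functions have a continuous representative away from the origin). For $s\le\tfrac12$ there is no pointwise Strauss bound, and neither of your two suggested routes (Caffarelli--Silvestre extension along rays, or Littlewood--Paley) will produce one. In that regime the cleanest argument is the one in Lions' original 1982 paper \emph{Sym\'etrie et compacit\'e dans les espaces de Sobolev}: show the compact embedding first for some $s_0>\tfrac12$ via your Strauss argument, and then interpolate between $H^{s_0}_r\hookrightarrow L^{p_0}$ (compact) and $L^2\hookrightarrow L^2$ (bounded) to obtain compactness of $H^s_r\hookrightarrow L^p$ for all intermediate $s$ and $p$. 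Since you already mention an ``alternative'' harmonic-analytic route, this is a refinement rather than a gap, but you should be explicit that the pointwise-decay step as written covers only $s>\tfrac12$.
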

For $u\in M_{c}$ and $\tau\in\mathbb{R}$, by a simple calculation, we have
\begin{equation*}
\begin{split}
&(\mathcal{J}_{u}^{\mu})'(\tau)=sae^{2s\tau}\|(-\Delta)^{\frac{s}{2}}u\|^{2}_{2}+sbe^{4s\tau}\|(-\Delta)^{\frac{s}{2}}u\|^{4}_{2}-\mu\vartheta_{s, q}se^{q\vartheta_{s, q}s\tau}\|u\|^{q}_{q}-\vartheta_{s, p}se^{p\vartheta_{s, p}s\tau}\|u\|^{p}_{p}\\
&=sa\|(-\Delta)^{\frac{s}{2}}(\tau\star u)\|^{2}_{2}+sb\|(-\Delta)^{\frac{s}{2}}(\tau\star u)\|^{4}_{2}-s\mu\vartheta_{s, q}\|\tau\star u\|^{q}_{q}-s\vartheta_{s, p}\|\tau\star u\|^{p}_{p}=sP_{\mu}(\tau\star u),
\end{split}
\end{equation*}
where $P_{\mu}$ is defined by \eqref{equ1-10}. Hence, we can see that
\begin{lemma}\label{lem2-5}
Let $u\in M_{c}$. Then $\tau\in\mathbb{R}$ is a critical point of $\mathcal{J}_{u}^{\mu}$ if and only if $\tau\star u\in \mathcal{P}_{c, \mu}$.
\end{lemma}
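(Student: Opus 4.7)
The plan is to derive this essentially directly from the scaling computation that already appears immediately before the lemma statement, combined with the scale-invariance of the mass constraint.

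First I would verify that the dilation $\tau\star u$ keeps us on the constraint manifold. A change of variables $y = e^{\tau}x$ gives
\begin{equation*}
\|\tau\star u\|_{2}^{2} = e^{3\tau}\int_{\mathbb{R}^{3}}|u(e^{\tau}x)|^{2}\,dx = \int_{\mathbb{R}^{3}}|u(y)|^{2}\,dy = c^{2},
\end{equation*}
so the map $\tau\mapsto \tau\star u$ sends $\mathbb{R}$ into $M_{c}$. In particular $\tau\star u$ is an admissible competitor for membership in $\mathcal{P}_{c,\mu}$.

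Next I would invoke the scaling identities $\|(-\Delta)^{s/2}(\tau\star u)\|_{2}^{2}=e^{2s\tau}\|(-\Delta)^{s/2}u\|_{2}^{2}$ and $\|\tau\star u\|_{\alpha}^{\alpha}=e^{\alpha\vartheta_{s,\alpha}s\tau}\|u\|_{\alpha}^{\alpha}$ for $\alpha\in\{p,q\}$ (the latter just amounts to $3\tau(\alpha/2-1)=\alpha\vartheta_{s,\alpha}s\tau$, which follows from $\vartheta_{s,\alpha}=3(\alpha-2)/(2s\alpha)$). Differentiating $\mathcal{J}_{u}^{\mu}(\tau)$ termwise and applying these identities yields, exactly as computed immediately above the lemma,
\begin{equation*}
(\mathcal{J}_{u}^{\mu})'(\tau) = s\bigl(a\|(-\Delta)^{s/2}(\tau\star u)\|_{2}^{2}+b\|(-\Delta)^{s/2}(\tau\star u)\|_{2}^{4}-\mu\vartheta_{s,q}\|\tau\star u\|_{q}^{q}-\vartheta_{s,p}\|\tau\star u\|_{p}^{p}\bigr) = sP_{\mu}(\tau\star u).
\end{equation*}

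Finally, since $s\in(0,1)$ is nonzero, $(\mathcal{J}_{u}^{\mu})'(\tau)=0$ is equivalent to $P_{\mu}(\tau\star u)=0$; combined with $\tau\star u\in M_{c}$ from the first step, this is in turn equivalent to $\tau\star u\in \mathcal{P}_{c,\mu}$ by the definition \eqref{equ1-9}. This proves both implications. There is no real obstacle here: the statement is essentially a restatement of the preceding computation, and the only points requiring any care are the scaling exponents and the fact that $s>0$ may be cancelled. The lemma's usefulness lies not in its proof but in its role later on, where it will let us convert the search for constrained critical points of $E_{\mu}$ on $M_{c}$ into the unconstrained analysis of the one-dimensional fiber $\mathcal{J}_{u}^{\mu}$.
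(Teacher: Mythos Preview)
Your proposal is correct and matches the paper's approach exactly: the paper does not give a separate proof but states the lemma as an immediate consequence of the displayed computation $(\mathcal{J}_{u}^{\mu})'(\tau)=sP_{\mu}(\tau\star u)$ that precedes it, together with the earlier observation that $\tau\star u\in M_{c}$. Your write-up simply makes explicit the mass invariance and the cancellation of the nonzero factor $s$, which are the only points implicit in the paper's ``Hence, we can see that''.
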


Next, we recall a version of linking theorem\cite{LLLL}.

\begin{definition}\label{lem2-6}
Let $B$ be a closed subset of $X$. We shall say that a class $\mathcal{F}$ of compact subsets of $X$ is homotopy-stable family with extended boundary $B$ if for any set $A$ in $\mathcal{F}$ and any $\eta\in C([0, 1]\times X; X)$ satisfying $\eta(t, x)=x$ for all $(t, x)\in(\{0\}\times X)\cup([0, 1]\times B)$ we have that $\eta(\{1\}\times A)\in\mathcal{F}$.
\end{definition}

\begin{lemma}\label{lem2-7}
Let $\varphi$ be a $C^{1}$-functional on a complete connected $C^{1}$-Finsler manifold $X$ and consider a homotopy-stable family $\mathcal{F}$ with extended boundary $B$. Set
\begin{equation*}
c=c(\varphi, \mathcal{F})=\inf_{A\in\mathcal{F}}\max_{x\in A}\varphi(x),
\end{equation*}
and let $F$ be a closed subset of $X$ satisfying
\begin{equation}\label{equ2-2}
A\cap F\backslash B\neq\emptyset, \quad \hbox{ for $A\in\mathcal{F}$},
\end{equation}
and
\begin{equation}\label{equ2-3}
\sup_{x\in B}\varphi(x)\leq c\leq\inf_{x\in F}\varphi(x).
\end{equation}
Then, for any sequence of sets $(A_{n})_{n}$ in $\mathcal{F}$ such that $\lim_{n}\sup_{A_{n}}\varphi=c$, there exists a sequence $(x_{n})_{n}$ in $X\backslash B$ such that\\
$(i)$ $\lim_{n}\varphi(x_{n})=c$;\\
$(ii)$ $\lim_{n}\|d\varphi(x_{n})\|=0$;\\
$(iii)$ $\lim_{n}dist(x_{n}, F)=0$;\\
$(iv)$ $\lim_{n}dist(x_{n}, A_{n})=0$.
\end{lemma}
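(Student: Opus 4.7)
The plan is to argue by contradiction via a quantitative deformation lemma on the $C^{1}$-Finsler manifold $X$. Assume the fourfold conclusion fails; then after passing to a subsequence one can find $\varepsilon_{0},\delta_{0}>0$ and sets $A_{n}\in\mathcal{F}$ with $\sup_{A_{n}}\varphi\to c$ such that
\begin{equation*}
\|d\varphi(x)\|\geq\varepsilon_{0}\quad\text{whenever }x\in X\setminus B,\ |\varphi(x)-c|\leq\varepsilon_{0},\ \mathrm{dist}(x,F)\leq\delta_{0},\ \mathrm{dist}(x,A_{n})\leq\delta_{0}.
\end{equation*}
The strategy is to convert this uniform lower bound into a continuous flow that lowers $\sup_{A_{n}}\varphi$ strictly below $c$, contradicting the minimax definition.

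First I would invoke the existence of a locally Lipschitz pseudo-gradient vector field $V$ for $\varphi$ on the open set where $d\varphi\neq 0$, normalized so that $\|V\|\leq 1$ and $\langle d\varphi,V\rangle\leq-\varepsilon_{0}/2$ throughout the "bad" region identified above. Using paracompactness of the Finsler manifold and a locally finite partition of unity, I would multiply $V$ by a Lipschitz cutoff $\chi$ that is identically one on a slightly smaller neighbourhood of the bad region and vanishes outside a slightly larger one, and in particular vanishes on $B$. The fact that a cutoff compatible with $B$ exists is extracted from \eqref{equ2-3}: the inequality $\sup_{B}\varphi\leq c\leq\inf_{F}\varphi$ separates $B$ from the thin level-slab $\{|\varphi-c|\leq\varepsilon_{0}\}\cap N_{\delta_{0}}(F)$, which is where the flow needs to act.

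Next I would integrate the field $\chi V$ to produce a flow $\eta\in C([0,1]\times X;X)$ satisfying $\eta(0,\cdot)=\mathrm{id}$ and $\eta(t,x)=x$ for $x\in B$. The homotopy-stability of $\mathcal{F}$ then yields $\eta(\{1\}\times A_{n})\in\mathcal{F}$. By \eqref{equ2-2}, each $A_{n}$ meets $F\setminus B$, and the points of $A_{n}$ where $\varphi$ sits in $[c-\varepsilon_{0},c+\varepsilon_{0}]$ and $\mathrm{dist}(\cdot,F)\leq\delta_{0}$ are precisely those at which the flow produces descent of at least $\varepsilon_{0}/2$; the remaining points have $\varphi$-value strictly below $c$ for $n$ large, because $\sup_{A_{n}}\varphi\to c$ forces most of $A_{n}$ into the low sublevel set. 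Combining gives $\sup_{\eta(\{1\}\times A_{n})}\varphi<c$, contradicting $c=\inf_{A\in\mathcal{F}}\sup_{A}\varphi$.

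The main obstacle is the simultaneous localization required by conclusions $(i)$--$(iv)$: a single cutoff $\chi$ must kill the vector field on $B$ (to preserve the boundary condition of homotopy-stability), confine the trajectories to the $\delta_{0}$-neighbourhoods of $F$ and of $A_{n}$ (so that the produced Palais--Smale-type sequence lies near both), and still be equal to one on a set large enough to guarantee strict descent of the maximum. Balancing these demands using only $\sup_{B}\varphi\leq c$ rather than a strict inequality is the delicate technical point, and it is exactly this subtlety that motivates Ghoussoub's formulation with an \emph{extended} boundary $B$ and an auxiliary dual set $F$.
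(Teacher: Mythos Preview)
The paper does not prove this lemma at all: it is stated as a recalled result, introduced by ``Next, we recall a version of linking theorem \cite{LLLL}'' and given no proof. There is therefore nothing in the paper to compare your argument against; the authors simply invoke the statement as a black box from the literature (it is Ghoussoub's minimax principle with dual set, originally from his monograph, quoted here via \cite{LLLL}).

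Your sketch is the standard Ghoussoub deformation-by-pseudo-gradient proof, and the overall architecture is right. One point deserves care: you write that \eqref{equ2-3} ``separates $B$ from the thin level-slab $\{|\varphi-c|\leq\varepsilon_{0}\}\cap N_{\delta_{0}}(F)$'', but since only $\sup_{B}\varphi\leq c$ (not strict) is assumed, $B$ may well meet $\{\varphi=c\}$ and hence this slab. The correct mechanism is not metric separation of $B$ from the slab; rather, the cutoff is built so that the flow is the identity on $B$ regardless, and one shows that points of $A_{n}$ lying in $B$ already have $\varphi\leq c$ and are pushed no higher, while the descent happens on $A_{n}\cap(F\setminus B)$, which is nonempty by \eqref{equ2-2}. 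You correctly flag this as ``the delicate technical point'', but the sentence about separation misdescribes how it is resolved. If you intend this as a genuine proof rather than a heuristic, that paragraph would need to be rewritten along these lines.
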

Finally, we give the following two technical results.
\begin{lemma}\label{lem2-8}\cite{LLLLLL}
Let $\hat{a}, \hat{b}, \hat{c}, \hat{d}, \hat{p}, \hat{q}\in(0, +\infty)$ and $f(t)=\hat{a}t^{2}+\hat{b}t^{4}-\hat{c}t^{\hat{p}}-\hat{d}t^{\hat{q}}$ for $t\geq0$. If $\hat{p}\in(4, +\infty)$, $\hat{q}\in(0, 2)$ and $\left[(\frac{8(4-\hat{q})}{\hat{p}(\hat{p}-2)(\hat{p}-\hat{q})})^{\frac{4-\hat{q}}{\hat{p}-4}}-(\frac{8(4-\hat{q})}{\hat{p}(\hat{p}-2)(\hat{p}-\hat{q})})^{\frac{\hat{p}-\hat{q}}{\hat{p}-4}}\right]\left[\frac{\hat{a}}{\hat{d}}(\frac{\hat{b}}{\hat{c}})^{\frac{2-\hat{q}}{\hat{p}-4}}+\frac{1}{\hat{d}}\frac{\hat{b}^{\frac{\hat{p}-\hat{q}}{\hat{p}-4}}}{\hat{c}^{\frac{4-\hat{q}}{\hat{p}-4}}}\right]>1$, then $f(t)$ has a local strict minimum at a negative level and a global strict maximum at a positive level on $[0, +\infty)$.
\end{lemma}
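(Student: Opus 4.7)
The plan is to analyse the graph of $f$ on $(0,\infty)$ by combining boundary information with a careful study of $f'$. First I would record the easy facts: $f(0)=0$; since $\hat{q}<2$, the term $-\hat{d}t^{\hat{q}}$ dominates as $t\to 0^{+}$, so $f(t)<0$ on some interval $(0,\epsilon)$; and $f(t)\to-\infty$ as $t\to\infty$ because $\hat{p}>4$ makes $-\hat{c}t^{\hat{p}}$ dominant.

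Next I would count critical points of $f$ on $(0,\infty)$ by introducing the auxiliary function
\[
g(t):=t^{1-\hat{q}}f'(t)=2\hat{a}t^{2-\hat{q}}+4\hat{b}t^{4-\hat{q}}-\hat{p}\hat{c}t^{\hat{p}-\hat{q}}-\hat{q}\hat{d},
\]
whose zeros on $(0,\infty)$ coincide with critical points of $f$. A direct differentiation gives $g'(t)=t^{1-\hat{q}}\rho(t)$ where $\rho(t)=2\hat{a}(2-\hat{q})+4\hat{b}(4-\hat{q})t^{2}-\hat{p}\hat{c}(\hat{p}-\hat{q})t^{\hat{p}-2}$. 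Since $\rho(0)>0$, $\rho(+\infty)=-\infty$, and $\rho$ has a unique critical point (a maximum) at $t_{0}=(K\hat{b}/\hat{c})^{1/(\hat{p}-4)}$ with $K:=\frac{8(4-\hat{q})}{\hat{p}(\hat{p}-2)(\hat{p}-\hat{q})}$, the function $\rho$ admits a single zero on $(0,\infty)$. Hence $g$ possesses a unique global maximum on $(0,\infty)$, and combined with $g(0^{+})=-\hat{q}\hat{d}<0$ and $g(+\infty)=-\infty$ this forces $g$ to have at most two zeros; equivalently, $f$ has at most two critical points on $(0,\infty)$.

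The crux is to show $\max_{t>0}f(t)>0$. I would evaluate $f$ at the explicit point $t_{0}$. The defining relation $t_{0}^{\hat{p}-4}=K\hat{b}/\hat{c}$ gives $\hat{c}t_{0}^{\hat{p}}=K\hat{b}t_{0}^{4}$, so
\[
f(t_{0})=\hat{a}t_{0}^{2}+(1-K)\hat{b}t_{0}^{4}-\hat{d}t_{0}^{\hat{q}}.
\]
Setting $A=\frac{\hat{a}}{\hat{d}}(\hat{b}/\hat{c})^{(2-\hat{q})/(\hat{p}-4)}$ and $B=\frac{\hat{b}}{\hat{d}}(\hat{b}/\hat{c})^{(4-\hat{q})/(\hat{p}-4)}$, the condition $f(t_{0})>0$ translates, after substituting the powers of $t_{0}$, to
\[
K^{(2-\hat{q})/(\hat{p}-4)}A+K^{(4-\hat{q})/(\hat{p}-4)}(1-K)\,B>1.
\]
Using the algebraic identities $K^{(4-\hat{q})/(\hat{p}-4)}-K^{(\hat{p}-\hat{q})/(\hat{p}-4)}=K^{(4-\hat{q})/(\hat{p}-4)}(1-K)$ and $\hat{b}^{(\hat{p}-\hat{q})/(\hat{p}-4)}/\hat{c}^{(4-\hat{q})/(\hat{p}-4)}=\hat{b}(\hat{b}/\hat{c})^{(4-\hat{q})/(\hat{p}-4)}$, the hypothesis of the lemma rewrites precisely as $K^{(4-\hat{q})/(\hat{p}-4)}(1-K)(A+B)>1$. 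Since $K\in(0,1)$ (which follows from $\hat{p}(\hat{p}-2)(\hat{p}-\hat{q})>8(4-\hat{q})$ when $\hat{p}>4$) and $2-\hat{q}<4-\hat{q}$ together give $K^{(2-\hat{q})/(\hat{p}-4)}>K^{(4-\hat{q})/(\hat{p}-4)}\geq K^{(4-\hat{q})/(\hat{p}-4)}(1-K)$, the hypothesis implies $f(t_{0})>0$.

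To conclude, $f(0)=0$ with $f<0$ near $0$, $f(t_{0})>0$, and $f(+\infty)=-\infty$ force $f'$ to have sign pattern $(-,+,-)$ on $(0,\infty)$; combined with the bound of at most two critical points from the second step, this yields a unique local minimum $\tilde{t}_{1}\in(0,t_{0})$ at strictly negative level and a unique global strict maximum $\tilde{t}_{2}\in(t_{0},+\infty)$ at strictly positive level, as claimed. The main obstacle is the recognition that the correct test point is the maximiser $t_{0}$ of the auxiliary function $\rho$ — rather than a critical point of $f$ itself — and unpacking through the above algebraic identities that the rather opaque hypothesis of the lemma is exactly a sufficient repackaging of the inequality $f(t_{0})>0$.
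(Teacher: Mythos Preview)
Your argument is correct and complete. The paper itself does not prove this lemma; it is quoted without proof from the reference \cite{LLLLLL}, so there is no in-paper argument to compare against. Your approach---reducing the count of critical points of $f$ to the shape of the auxiliary $g(t)=t^{1-\hat q}f'(t)$ via the single-peaked function $\rho$, and then verifying $f(t_0)>0$ at the explicit point $t_0=(K\hat b/\hat c)^{1/(\hat p-4)}$ by showing the lemma's hypothesis is precisely $K^{(4-\hat q)/(\hat p-4)}(1-K)(A+B)>1$ and that this is dominated by the quantity $K^{(2-\hat q)/(\hat p-4)}A+K^{(4-\hat q)/(\hat p-4)}(1-K)B$---is a clean self-contained proof. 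The observation that $K<1$ (hence the bracketed factor in the hypothesis is positive and the comparison of powers goes the right way) is the small but essential point that makes the implication work, and you handle it correctly. One cosmetic remark: the claim $\tilde t_2\in(t_0,+\infty)$ in the last paragraph is not actually needed and is not argued; what you do need, and have, is $f(\tilde t_2)\ge f(t_0)>0$.
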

\begin{lemma}\label{lem2-9}\cite{LLLLLL}
Let $\hat{a}, \hat{b}, \hat{c}, \hat{d}, \hat{p}, \hat{q}\in(0, +\infty)$ and $f(t)=\hat{a}t^{2}+\hat{b}t^{4}-\hat{c}t^{\hat{p}}-\hat{d}t^{\hat{q}}$ for $t\geq0$. If $\hat{p}, \hat{q}\in(4, +\infty)$, then $f(t)$ has a unique maximum point at a positive level on $[0, +\infty)$.
\end{lemma}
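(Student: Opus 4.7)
\smallskip

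The plan is to reduce everything to a single-variable monotonicity analysis, following the classical ``divide by $t$'' trick that isolates the sign of $f'$ on $(0,+\infty)$.

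First, I would record the boundary behavior of $f$: clearly $f(0)=0$, and near $t=0$ the polynomial part $\hat{a}t^{2}+\hat{b}t^{4}$ dominates (both exponents $\hat p,\hat q$ are strictly bigger than $4$), so $f(t)>0$ for all sufficiently small $t>0$; on the other hand, since $\hat p,\hat q>4$, the negative terms $-\hat{c}t^{\hat p}-\hat{d}t^{\hat q}$ dominate as $t\to+\infty$, so $f(t)\to-\infty$. In particular $\sup_{t\ge 0}f(t)>0$ is attained in the interior of $(0,+\infty)$, so existence of a positive-level maximum is automatic.

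Next, for uniqueness, I would show that $f'$ has exactly one zero in $(0,+\infty)$. Writing
\begin{equation*}
\frac{f'(t)}{t}=2\hat{a}+4\hat{b}t^{2}-\hat{p}\hat{c}t^{\hat{p}-2}-\hat{q}\hat{d}t^{\hat{q}-2}=:h(t),
\end{equation*}
it suffices to prove $h$ has a unique zero on $(0,+\infty)$. Observe that $h(0)=2\hat a>0$ and $h(t)\to-\infty$ as $t\to+\infty$. To pin down the shape of $h$, I would compute
\begin{equation*}
\frac{h'(t)}{t}=8\hat{b}-\hat{p}(\hat{p}-2)\hat{c}t^{\hat{p}-4}-\hat{q}(\hat{q}-2)\hat{d}t^{\hat{q}-4}=:H(t),
\end{equation*}
and note that since $\hat p-4,\hat q-4>0$ the function $H$ is strictly decreasing on $[0,+\infty)$ with $H(0)=8\hat b>0$ and $H(+\infty)=-\infty$. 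Thus there is a unique $t_{1}>0$ with $H(t_{1})=0$, $h'>0$ on $(0,t_{1})$ and $h'<0$ on $(t_{1},+\infty)$. Consequently $h$ is strictly increasing on $(0,t_{1})$ and strictly decreasing on $(t_{1},+\infty)$; combined with $h(0)>0$ and $h(+\infty)=-\infty$, this forces $h$ to have exactly one zero $t^{\ast}\in(t_{1},+\infty)$, and moreover $h>0$ on $(0,t^{\ast})$ and $h<0$ on $(t^{\ast},+\infty)$.

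Translating back, this says $f'(t)>0$ on $(0,t^{\ast})$ and $f'(t)<0$ on $(t^{\ast},+\infty)$, so $t^{\ast}$ is the unique critical point of $f$ in $(0,+\infty)$ and it is the global maximum on $[0,+\infty)$. Since $f$ is strictly increasing on $(0,t^{\ast})$ and $f(0)=0$, we obtain $f(t^{\ast})>0$, giving the positive level. The argument contains no real obstacle; the only mildly delicate point is that two distinct exponents $\hat p,\hat q$ appear simultaneously in the negative part, but because both exceed $4$ the functions $H$ and $h$ retain their monotone behavior without any additional structural assumption, which is exactly the feature that distinguishes this lemma from the more intricate Lemma \ref{lem2-8}.
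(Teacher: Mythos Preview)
Your argument is correct. The paper does not supply its own proof of this lemma; it simply quotes the result from \cite{LLLLLL}, so there is nothing in the present text to compare against. Your ``divide $f'$ by $t$, then divide $h'$ by $t$'' reduction is the standard way to handle this: once $\hat p,\hat q>4$, the auxiliary function $H(t)=8\hat b-\hat p(\hat p-2)\hat c\,t^{\hat p-4}-\hat q(\hat q-2)\hat d\,t^{\hat q-4}$ is strictly decreasing with $H(0)>0$ and $H(+\infty)=-\infty$, which forces $h$ to be unimodal and hence $f'$ to have a unique sign change. This is exactly the kind of elementary calculus argument one expects for such a statement, and your write-up contains no gaps.
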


\section{compactness result of Palais-Smale sequences}

In this section, we give a compact result about Palais-Smale sequences (PS sequences for short), which is crucial to prove our main result.

\begin{proposition}\label{lem3-1}
Let $2<q<2+\frac{8s}{3}<p<2_{s}^{\ast}$ or $2+\frac{8s}{3}<q<p<2_{s}^{\ast}$. Let $\{u_{n}\}\subset M_{c, r}$ be a Palais-Smale sequence for $E_{\mu}|_{M_{c}}$ at level $m\neq0$ with $P_{\mu}(u_{n})\rightarrow0$ as $n\rightarrow\infty$. Then, up to a subsequence, $u_{n}\rightarrow u$ in $H^{s}(\mathbb{R}^{3})$, $E_{\mu}(u)=m$ and $u$ is a radial solution to \eqref{equ1-1} for some $\lambda<0$.
\end{proposition}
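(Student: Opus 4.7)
The plan is to proceed in several stages: boundedness, extraction of a Lagrange multiplier, nontriviality of the weak limit, strict negativity of $\lambda$, strong convergence, and identification of the limit as a solution. To prove boundedness of $\{u_n\}$ in $H^s(\mathbb{R}^3)$, I would form the combination $4E_\mu(u_n) - P_\mu(u_n)$; using the definitions of $E_\mu$ and $P_\mu$ the fourth-order Kirchhoff term cancels, leaving
\begin{equation*}
4m + o(1) = a\|(-\Delta)^{s/2}u_n\|_2^2 + \mu\bigl(\vartheta_{s,q} - \tfrac{4}{q}\bigr)\|u_n\|_q^q + \bigl(\vartheta_{s,p} - \tfrac{4}{p}\bigr)\|u_n\|_p^p.
\end{equation*}
In case (b), $q\vartheta_{s,q}, p\vartheta_{s,p} > 4$ so both bracketed coefficients are nonnegative, yielding boundedness of $A_n := \|(-\Delta)^{s/2}u_n\|_2^2$ directly. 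In case (a), only the $L^q$ term is of wrong sign, but by Lemma~\ref{lem2-1} and the mass constraint one has $\|u_n\|_q^q \leq C\,A_n^{q\vartheta_{s,q}/2}$ with $q\vartheta_{s,q}/2 < 1$, so that term is sublinear in $A_n$ and cannot prevent boundedness. Since the sequence is radial, Lemma~\ref{lem2-4} then gives a subsequence with $u_n \rightharpoonup u$ in $H^s(\mathbb{R}^3)$ and $u_n \to u$ in $L^\alpha(\mathbb{R}^3)$ for every $\alpha \in (2, 2_s^{\ast})$.

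Next, standard constrained critical point theory yields $\lambda_n \in \mathbb{R}$ with $E_\mu'(u_n) - \lambda_n u_n \to 0$ in $H^{-s}$; testing with $u_n$ gives
\begin{equation*}
\lambda_n c^2 = aA_n + bA_n^2 - \mu\|u_n\|_q^q - \|u_n\|_p^p + o(1),
\end{equation*}
which is bounded, so $\lambda_n \to \lambda$ up to extraction. To see $u \not\equiv 0$, if $u \equiv 0$ then $\|u_n\|_q, \|u_n\|_p \to 0$, and $P_\mu(u_n) \to 0$ forces $A_n \to 0$, whence $E_\mu(u_n) \to 0$, contradicting $m \neq 0$. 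Substituting $aA_n + bA_n^2 = \mu\vartheta_{s,q}\|u_n\|_q^q + \vartheta_{s,p}\|u_n\|_p^p + o(1)$ from $P_\mu(u_n) \to 0$ into the above identity and passing to the limit using strong $L^q, L^p$ convergence produces
\begin{equation*}
\lambda c^2 = -\mu(1-\vartheta_{s,q})\|u\|_q^q - (1-\vartheta_{s,p})\|u\|_p^p,
\end{equation*}
which is strictly negative because $\vartheta_{s,q}, \vartheta_{s,p} \in (0,1)$ and $u \not\equiv 0$.

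For strong convergence I would pair $E_\mu'(u_n) - \lambda_n u_n = o(1)$ with $u_n - u$. The nonlinear terms vanish by strong $L^q, L^p$ convergence, and writing $A := \lim A_n$ and $A_0 := \|(-\Delta)^{s/2}u\|_2^2$, passing to the limit gives the key identity
\begin{equation*}
(a + bA)(A - A_0) = \lambda\bigl(c^2 - \|u\|_2^2\bigr).
\end{equation*}
By weak lower semicontinuity $A \geq A_0$ and $c^2 \geq \|u\|_2^2$; since $\lambda < 0$ and $a + bA > 0$, the left side is nonnegative while the right side is nonpositive, so both vanish. Hence $A = A_0$ and $\|u\|_2 = c$, which combined with weak convergence in the Hilbert space $H^s(\mathbb{R}^3)$ upgrades to $u_n \to u$ strongly in $H^s(\mathbb{R}^3)$. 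Passing to the limit in $E_\mu'(u_n) - \lambda_n u_n \to 0$ then gives $(a + bA_0)(-\Delta)^s u = \lambda u + \mu|u|^{q-2}u + |u|^{p-2}u$ with $\lambda < 0$, and continuity of $E_\mu$ yields $E_\mu(u) = m$.

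The main obstacle is the nonlocal Kirchhoff coefficient: a priori the weak limit satisfies a distorted equation with coefficient $a + bA$ rather than $a + bA_0$, and without identifying $A = A_0$ one cannot claim $u$ solves \eqref{equ1-1}. The fix must reconcile two a priori inequalities (for $A$ and for $\|u\|_2$) in a single identity, and the above scheme achieves this by exploiting the strict sign $\lambda < 0$; proving $\lambda < 0$ in turn crucially uses the Pohoz\v{a}ev constraint $P_\mu(u_n) \to 0$ together with the nontriviality of $u$. This chain of implications is the delicate part of the argument.
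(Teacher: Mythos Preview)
Your overall strategy matches the paper's proof closely: boundedness via a linear combination of $E_\mu$ and $P_\mu$, compactness of the radial embedding, extraction of $\lambda_n$, nontriviality of the weak limit from $m\neq 0$, the sign $\lambda<0$ from the Pohozaev relation, and strong convergence from the sign of $\lambda$. Your strong-convergence step, deriving the single identity $(a+bA)(A-A_0)=\lambda(c^2-\|u\|_2^2)$ and reading off both $A=A_0$ and $\|u\|_2=c$ from the opposite signs, is essentially a repackaging of the paper's argument (the paper first passes to the weak limit equation with coefficient $a+bB$ and then subtracts, arriving at $(a+bB)\|(-\Delta)^{s/2}(u_n-u)\|_2^2-\lambda\|u_n-u\|_2^2\to 0$); both use $\lambda<0$ in the same way.

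There is, however, a genuine gap in your boundedness argument for case~(a). Your combination $4E_\mu-P_\mu$ kills the quartic Kirchhoff term, leaving $aA_n$ as the only positive power of $A_n$ to absorb the $L^q$ term. You then claim $q\vartheta_{s,q}/2<1$, but this is equivalent to $q<2+\tfrac{4s}{3}$, whereas the proposition is stated for the full range $2<q<2+\tfrac{8s}{3}$, i.e.\ $q\vartheta_{s,q}\in(0,4)$. When $q\vartheta_{s,q}\in[2,4)$ the Gagliardo--Nirenberg bound $\|u_n\|_q^q\le C A_n^{q\vartheta_{s,q}/2}$ has exponent $\ge 1$, so it can dominate $aA_n$ and your inequality no longer forces boundedness. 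The paper avoids this by using $E_\mu-\tfrac{1}{p\vartheta_{s,p}}P_\mu$, which retains a positive $A_n^2$ term; since $q\vartheta_{s,q}/2<2$, that quartic term controls the $L^q$ contribution for all $q<2+\tfrac{8s}{3}$. Replacing your linear combination by this one (or by any combination keeping a positive $A_n^2$ coefficient) repairs the argument.
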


\begin{proof}
Firstly, we need to show that $\{u_{n}\}$ is bounded in $H^{s}(\mathbb{R}^{3})$. Indeed, if $2<q<2+\frac{8s}{3}<p<2_{s}^{\ast}$, then $q\vartheta_{s, q}<4<p\vartheta_{s, p}$. By $P_{\mu}(u_{n})\rightarrow0$, we have that
\begin{equation*}
\begin{split}
E_{\mu}(u_{n})&=(\frac{a}{2}-\frac{a}{p\vartheta_{s, p}})\|(-\Delta)^{\frac{s}{2}}u_{n}\|_{2}^{2}+(\frac{b}{4}-\frac{b}{p\vartheta_{s, p}})\|(-\Delta)^{\frac{s}{2}}u_{n}\|_{2}^{4}\\
&-\frac{\mu}{q}(1-\frac{q\vartheta_{s, q}}{p\vartheta_{s, p}})\|u_{n}\|_{q}^{q}+o_{n}(1).
\end{split}
\end{equation*}
Using $E_{\mu}(u_{n})\leq m+1$ for $n$ large and Lemma \ref{lem2-1}, we can see that
\begin{equation*}
\begin{split}
&(\frac{a}{2}-\frac{a}{p\vartheta_{s, p}})\|(-\Delta)^{\frac{s}{2}}u_{n}\|_{2}^{2}+(\frac{b}{4}-\frac{b}{p\vartheta_{s, p}})\|(-\Delta)^{\frac{s}{2}}u_{n}\|_{2}^{4}\\
&\leq m+1+\frac{\mu}{q}(1-\frac{q\vartheta_{s, q}}{p\vartheta_{s, p}})C^{q}(s, q)\|(-\Delta)^{\frac{s}{2}}u\|_{2}^{q\vartheta_{s, q}}\|u\|_{2}^{q(1-\vartheta_{s, q})},
\end{split}
\end{equation*}
which implies that $\{u_{n}\}$ is bounded in $H^{s}(\mathbb{R}^{3})$.

If $2+\frac{8s}{3}<q<p<2_{s}^{\ast}$, then $4<q\vartheta_{s, q}<p\vartheta_{s, p}$. By using $P_{\mu}(u_{n})\rightarrow0$, we get that
\begin{equation*}
\begin{split}
E_{\mu}(u_{n})&=\frac{a}{4}\|(-\Delta)^{\frac{s}{2}}u_{n}\|_{2}^{2}+\mu(\frac{\vartheta_{s, q}}{4}-\frac{1}{q})\|u_{n}\|_{q}^{q}+(\frac{\vartheta_{s, p}}{4}-\frac{1}{p})\|u_{n}\|_{p}^{p}+o_{n}(1)\\
&=\frac{a}{4}\|(-\Delta)^{\frac{s}{2}}u_{n}\|_{2}^{2}+\mu\frac{q\vartheta_{s, q}-4}{4q}\|u_{n}\|_{q}^{q}+\frac{p\vartheta_{s, p}-4}{4p}\|u_{n}\|_{p}^{p}+o_{n}(1)\\
&\leq m+1,
\end{split}
\end{equation*}
for $n$ large, which implies that $\{u_{n}\}$ is bounded in $H^{s}(\mathbb{R}^{3})$.

Now, since $\{u_{n}\}$ is a bounded sequence of radial functions, using Lemma \ref{lem2-4}, up to a subsequence, we may assume that $u_{n}\rightharpoonup u$ in $H^{s}(\mathbb{R}^{3})$, $u_{n}\rightarrow u$ in $L^{r}(\mathbb{R}^{3})$, for any $r\in(2, 2_{s}^{\ast})$ and $u_{n}\rightarrow u$ a.e. in $\mathbb{R}^{3}$. By applying $\{u_{n}\}$ is a bounded PS sequence for $E_{\mu}|_{M_{c}}$ and the Lagrange multipliers rule, there exists $\{\lambda_{n}\}\subset \mathbb{R}$ such that
\begin{equation}\label{equ3-1}
\begin{split}
&(a+b\|(-\Delta)^{\frac{s}{2}}u_{n}\|_{2}^{2})\int_{\mathbb{R}^{3}}(-\Delta)^{\frac{s}{2}}u_{n}(-\Delta)^{\frac{s}{2}}\varphi dx-\lambda_{n}\int_{\mathbb{R}^{3}}u_{n}\varphi dx\\
&-\mu\int_{\mathbb{R}^{3}}|u_{n}|^{q-2}u_{n}\varphi dx-\int_{\mathbb{R}^{3}}|u_{n}|^{p-2}u_{n}\varphi dx=o(1)\|\varphi\|,\quad \hbox{  $\varphi\in H^{s}(\mathbb{R}^{3})$},
\end{split}
\end{equation}
as $n\rightarrow\infty$. Letting $\varphi=u_{n}$, from the boundedness of $\{u_{n}\}$ in $H^{s}(\mathbb{R}^{3})\cap L^{p}(\mathbb{R}^{3})\cap L^{q}(\mathbb{R}^{3})$, we get that $\{\lambda_{n}\}$ is bounded, and hence up to a subsequence, we may assume that $\lambda_{n}\rightarrow\lambda\in\mathbb{R}$. By using $P_{\mu}(u_{n})\rightarrow0$ and $\vartheta_{s, q}<1$, we can see that
\begin{equation*}
\begin{split}
\lambda c^{2}&=\lim_{n\rightarrow\infty}\lambda_{n}\|u_{n}\|_{2}^{2}=\lim_{n\rightarrow\infty}\left(\mu(\vartheta_{s, q}-1)\|u_{n}\|_{q}^{q}+(\vartheta_{s, p}-1)\|u_{n}\|_{p}^{p}+o_{n}(1)\right)\\
&=\mu(\vartheta_{s, q}-1)\|u\|_{q}^{q}+(\vartheta_{s, p}-1)\|u\|_{p}^{p}\leq0.
\end{split}
\end{equation*}
Now, we claim that $u$ does not vanish. Assume by contradiction that $u\equiv0$, we have $\lim_{n\rightarrow\infty}\|u_{n}\|_{q}^{q}=\lim_{n\rightarrow\infty}\|u_{n}\|_{p}^{p}=0$. Using $P_{\mu}(u_{n})\rightarrow0$, we can see that $E_{\mu}(u_{n})\rightarrow0$, which is a contradiction with $E_{\mu}(u_{n})\rightarrow m\neq0$. Thus $\lambda_{n}\rightarrow\lambda<0$ and $u\not\equiv 0$.

Next, we show that $u_{n}\rightarrow u$ in $H^{s}(\mathbb{R}^{3})$. Indeed, since $u_{n}\rightharpoonup u\not\equiv 0$ in $H^{s}(\mathbb{R}^{3})$, we can see that $B:=\lim_{n\rightarrow\infty}\|(-\Delta)^{\frac{s}{2}}u_{n}\|_{2}^{2}\geq\|(-\Delta)^{\frac{s}{2}}u\|_{2}^{2}>0$. Thus, from \eqref{equ3-1}, we get
\begin{equation}\label{equ3-2}
\begin{split}
&(a+bB)\int_{\mathbb{R}^{3}}(-\Delta)^{\frac{s}{2}}u(-\Delta)^{\frac{s}{2}}\varphi dx-\lambda\int_{\mathbb{R}^{3}}u\varphi dx\\
&-\mu\int_{\mathbb{R}^{3}}|u|^{q-2}u\varphi dx-\int_{\mathbb{R}^{3}}|u|^{p-2}u\varphi dx=0,\quad \hbox{  $\varphi\in H^{s}(\mathbb{R}^{3})$}.
\end{split}
\end{equation}
Test \eqref{equ3-1}-\eqref{equ3-2} with $\varphi=u_{n}-u$, we can see that
\begin{equation*}
(a+bB)\int_{\mathbb{R}^{3}}|(-\Delta)^{\frac{s}{2}}(u_{n}-u)|^{2}dx-\lambda\int_{\mathbb{R}^{3}}|u_{n}-u|^{2}dx\rightarrow0.
\end{equation*}

\end{proof}

\section{mixed critical case}
In this section, we always assume that $2<q<2+\frac{4s}{3}$ and $2+\frac{8s}{3}<p<2_{s}^{\ast}$, then $q\vartheta_{s, q}<2$ and $p\vartheta_{s, p}>4$. We will show Theorems \ref{the1-1}-\ref{the1-2}. From $\mathcal{P}_{c, \mu}=\mathcal{P}_{c, \mu}^{+}\bigcup\mathcal{P}_{c, \mu}^{0}\bigcup\mathcal{P}_{c, \mu}^{-}$, we have next result.

\begin{lemma}\label{lem4-1}
Let $2<q<2+\frac{4s}{3}$, $2+\frac{8s}{3}<p<2_{s}^{\ast}$ and $0<\mu<\mu_{1}$. Then $\mathcal{P}_{c, \mu}^{0}=\emptyset$ and $\mathcal{P}_{c, \mu}$ is a smooth manifold of codimension $2$ in $M_{c}$. Here $\mu_{1}$ was defined in \eqref{equ1-7}.
\end{lemma}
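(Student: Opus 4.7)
The statement has two parts: (i) $\mathcal{P}_{c,\mu}^{0}=\emptyset$, and (ii) $\mathcal{P}_{c,\mu}$ is a smooth manifold. My plan is to dispatch (i) by an algebraic/Gagliardo--Nirenberg argument, and then deduce (ii) from (i) by the implicit function theorem.

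\textbf{Step 1: Setting up the algebraic system.} Suppose for contradiction that some $u\in\mathcal{P}_{c,\mu}^{0}$ exists. Writing $A:=\|(-\Delta)^{s/2}u\|_{2}^{2}$, $B:=\|u\|_{q}^{q}$, $C:=\|u\|_{p}^{p}$, the conditions $P_\mu(u)=0$ and $(\mathcal{J}_u^\mu)''(0)=0$ give
\begin{equation*}
aA+bA^{2}=\mu\vartheta_{s,q}B+\vartheta_{s,p}C, \qquad 2aA+4bA^{2}=\mu q\vartheta_{s,q}^{2}B+p\vartheta_{s,p}^{2}C.
\end{equation*}
Multiplying the first by $p\vartheta_{s,p}$ and subtracting the second (and similarly with $q\vartheta_{s,q}$) eliminates $C$ (resp.\ $B$) and yields, using $q\vartheta_{s,q}<2<4<p\vartheta_{s,p}$ so that all coefficients are positive,
\begin{equation*}
(p\vartheta_{s,p}-2)aA+(p\vartheta_{s,p}-4)bA^{2}=\mu\vartheta_{s,q}(p\vartheta_{s,p}-q\vartheta_{s,q})B,
\end{equation*}
\begin{equation*}
(2-q\vartheta_{s,q})aA+(4-q\vartheta_{s,q})bA^{2}=\vartheta_{s,p}(p\vartheta_{s,p}-q\vartheta_{s,q})C.
\end{equation*}

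\textbf{Step 2: Squeezing $A$ via Gagliardo--Nirenberg.} Dropping the nonnegative $a$-terms in the two identities above and applying Lemma \ref{lem2-1} (so that $B\leq C^{q}(s,q)A^{q\vartheta_{s,q}/2}c^{q(1-\vartheta_{s,q})}$ and $C\leq C^{p}(s,p)A^{p\vartheta_{s,p}/2}c^{p(1-\vartheta_{s,p})}$), I will obtain
\begin{equation*}
A^{(4-q\vartheta_{s,q})/2}\leq \frac{\mu\vartheta_{s,q}(p\vartheta_{s,p}-q\vartheta_{s,q})C^{q}(s,q)c^{q(1-\vartheta_{s,q})}}{(p\vartheta_{s,p}-4)b},
\end{equation*}
\begin{equation*}
A^{(p\vartheta_{s,p}-4)/2}\geq \frac{(4-q\vartheta_{s,q})b}{\vartheta_{s,p}(p\vartheta_{s,p}-q\vartheta_{s,q})C^{p}(s,p)c^{p(1-\vartheta_{s,p})}}.
\end{equation*}
Raising the lower bound to the power $(4-q\vartheta_{s,q})/(p\vartheta_{s,p}-4)$ and combining with the upper one forces a lower bound on $\mu$ depending only on $a,b,c,p,q,s$. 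A short computation (tracking the $c$-exponent as $q(1-\vartheta_{s,q})+\frac{p(1-\vartheta_{s,p})(4-q\vartheta_{s,q})}{p\vartheta_{s,p}-4}$) identifies this threshold with the constant $\mu_{1}$ defined in \eqref{equ1-7}. Hence $\mu<\mu_{1}$ contradicts the existence of $u$, proving $\mathcal{P}_{c,\mu}^{0}=\emptyset$.

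\textbf{Step 3: Smoothness via the implicit function theorem.} Since $M_{c}$ is a smooth codimension-one submanifold of $H^{s}(\mathbb{R}^{3})$, to conclude that $\mathcal{P}_{c,\mu}=\{u\in M_{c}:P_\mu(u)=0\}$ is a smooth submanifold of the required codimension I only need $dP_\mu|_{M_{c}}(u)\neq 0$ for every $u\in\mathcal{P}_{c,\mu}$. Suppose, on the contrary, that $dP_\mu|_{M_{c}}(u)=0$ at some $u\in\mathcal{P}_{c,\mu}$. Because the scaling $\tau\mapsto \tau\star u$ preserves the $L^{2}$-norm, the tangent vector $v:=\frac{d}{d\tau}(\tau\star u)|_{\tau=0}$ lies in $T_{u}M_{c}$, so $dP_\mu(u)\cdot v=0$. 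But $(\mathcal{J}_u^\mu)'(\tau)=sP_\mu(\tau\star u)$ implies $(\mathcal{J}_u^\mu)''(0)=s\,dP_\mu(u)\cdot v=0$, hence $u\in\mathcal{P}_{c,\mu}^{0}$, contradicting Step~2. The conclusion then follows from the implicit function theorem applied to the map $u\mapsto(\|u\|_{2}^{2}-c^{2},P_\mu(u))$.

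\textbf{Expected obstacle.} The substantive work is purely the algebra in Step 2: tracking the exponents of $c$ carefully so that the combined inequality produces exactly the constant $\mu_{1}$ in \eqref{equ1-7}. Step 1 is bookkeeping and Step 3 is a standard dilation/IFT argument enabled by Step 2.
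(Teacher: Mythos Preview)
Your approach is essentially identical to the paper's: the same elimination in Step~1, the same Gagliardo--Nirenberg bounds in Step~2, and the standard dilation/IFT argument in Step~3 (which the paper defers to Soave). One small inaccuracy to correct: the ``short computation'' in Step~2 does \emph{not} produce exactly $\mu_{1}$. Carrying out your two inequalities yields
\[
\mu \ \ge\ \frac{(p\vartheta_{s,p}-4)b}{\vartheta_{s,q}(p\vartheta_{s,p}-q\vartheta_{s,q})C^{q}(s,q)}
\left[\frac{(4-q\vartheta_{s,q})b}{\vartheta_{s,p}(p\vartheta_{s,p}-q\vartheta_{s,q})C^{p}(s,p)}\right]^{\frac{4-q\vartheta_{s,q}}{p\vartheta_{s,p}-4}}
\cdot\frac{1}{c^{\,q(1-\vartheta_{s,q})+\frac{p(1-\vartheta_{s,p})(4-q\vartheta_{s,q})}{p\vartheta_{s,p}-4}}},
\]
which differs from $\mu_{1}$ by the factor $\dfrac{4}{q\vartheta_{s,q}}\Bigl(\dfrac{4}{p\vartheta_{s,p}}\Bigr)^{\frac{4-q\vartheta_{s,q}}{p\vartheta_{s,p}-4}}$. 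The paper then invokes the elementary inequality $\bigl(\tfrac{q\vartheta_{s,q}}{4}\bigr)^{p\vartheta_{s,p}-4}\bigl(\tfrac{p\vartheta_{s,p}}{4}\bigr)^{4-q\vartheta_{s,q}}<1$ (which follows from the strict concavity of $t\mapsto \ln(t/4)/(t-4)$) to conclude that this factor exceeds $1$, hence $\mu>\mu_{1}$ and the contradiction goes through. So your plan works, but you should not expect the algebra to land on $\mu_{1}$ on the nose; you need this extra comparison step.
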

\begin{proof}
If $u\in\mathcal{P}_{c, \mu}^{0}$, from $P_{\mu}(u)=0$, we have
\begin{equation*}
a\|(-\Delta)^{\frac{s}{2}}u\|_{2}^{2}+b\|(-\Delta)^{\frac{s}{2}}u\|_{2}^{4}=\mu\vartheta_{s, q}\|u\|_{q}^{q}+\vartheta_{s, p}\|u\|_{p}^{p},
\end{equation*}
and
\begin{equation*}
2a\|(-\Delta)^{\frac{s}{2}}u\|_{2}^{2}+4b\|(-\Delta)^{\frac{s}{2}}u\|_{2}^{4}=\mu q\vartheta^{2}_{s, q}\|u\|_{q}^{q}+p\vartheta^{2}_{s, p}\|u\|_{p}^{p}.
\end{equation*}
From Lemma \ref{lem2-1}, we have
\begin{equation}\label{equ4-1}
\begin{split}
&(2-q\vartheta_{s, q})a\|(-\Delta)^{\frac{s}{2}}u\|_{2}^{2}+(4-q\vartheta_{s, q})b\|(-\Delta)^{\frac{s}{2}}u\|_{2}^{4}\\
&\leq\vartheta_{s, p}(p\vartheta_{s, p}-q\vartheta_{s, q})C^{p}(s, p)\|(-\Delta)^{\frac{s}{2}}u\|_{2}^{p\vartheta_{s, p}}c^{(1-\vartheta_{s, p})p},
\end{split}
\end{equation}
and
\begin{equation}\label{equ4-2}
\begin{split}
&(p\vartheta_{s, p}-2)a\|(-\Delta)^{\frac{s}{2}}u\|_{2}^{2}+(p\vartheta_{s, p}-4)b\|(-\Delta)^{\frac{s}{2}}u\|_{2}^{4}\\
&\leq\mu\vartheta_{s, q}(p\vartheta_{s, p}-q\vartheta_{s, q})C^{q}(s, q)\|(-\Delta)^{\frac{s}{2}}u\|_{2}^{q\vartheta_{s, q}}c^{(1-\vartheta_{s, q})q}.
\end{split}
\end{equation}
By \eqref{equ4-1}-\eqref{equ4-2}, we can get
\begin{equation*}
\mu\geq\frac{1}{c^{q(1-\vartheta_{s, q})+\frac{p(1-\vartheta_{s, p})(4-q\vartheta_{s, q})}{p\vartheta_{s, p}-4}}}\left[\frac{(p\vartheta_{s, p}-4)b}{\vartheta_{s, q}(p\vartheta_{s, p}-q\vartheta_{s, q})C^{q}(s, q)}\right]\left[\frac{(4-q\vartheta_{s, q})b}{\vartheta_{s, p}(p\vartheta_{s, p}-q\vartheta_{s, q})C^{p}(s, p)}\right]^{\frac{4-q\vartheta_{s, q}}{p\vartheta_{s, p}-4}},
\end{equation*}
which implies that $\mu>\mu_{1}$ by the fact that $(\frac{q\vartheta_{s, q}}{4})^{p\vartheta_{s, p}-4}(\frac{p\vartheta_{s, p}}{4})^{4-q\vartheta_{s, q}}<1$. This is a contradiction with $0<\mu<\mu_{1}$. Therefore, $\mathcal{P}_{c, \mu}^{0}=\emptyset$. The rest of the proof is very similar to the Lemma 5.2 in \cite{SSSSSS}, we omit it.

\end{proof}

\begin{lemma}\label{lem4-2}
Let $2<q<2+\frac{4s}{3}$, $2+\frac{8s}{3}<p<2_{s}^{\ast}$ and $0<\mu<\mu_{1}$. If $u\in \mathcal{P}_{c, \mu}$ is a critical point for $E_{\mu}|_{\mathcal{P}_{c, \mu}}$, then $u$ is a critical point for $E_{\mu}|_{M_{c}}$. Here $\mu_{1}$ was defined in \eqref{equ1-7}.
\end{lemma}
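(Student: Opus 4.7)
The natural approach is the \emph{natural constraint} argument. By Lemma \ref{lem4-1}, under the assumption $0<\mu<\mu_1$, the set $\mathcal{P}_{c,\mu}$ is a smooth submanifold of $H^s(\mathbb{R}^3)$ of codimension $2$, cut out by $G_1(u):=\|u\|_2^2-c^2=0$ and $G_2(u):=P_\mu(u)=0$, so the differentials $dG_1(u)$ and $dG_2(u)$ are linearly independent. The Lagrange multiplier rule then produces $\lambda,\nu\in\mathbb{R}$ with
\begin{equation*}
dE_\mu(u)=\lambda\, dG_1(u)+\nu\, dG_2(u)\quad\text{in } (H^s(\mathbb{R}^3))^*.
\end{equation*}
Once I establish $\nu=0$, this collapses to $dE_\mu(u)=2\lambda u$, which is exactly the Euler--Lagrange equation for $E_\mu|_{M_c}$, and the lemma follows.

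To show $\nu=0$, I would test the identity above against the infinitesimal generator of the scaling, namely $w:=\tfrac{d}{d\tau}(\tau\star u)\big|_{\tau=0}$. Three cancellations occur. First, since $\|\tau\star u\|_2^2=\|u\|_2^2$ for every $\tau$, we obtain $\langle dG_1(u),w\rangle=0$. Second, because $u\in\mathcal{P}_{c,\mu}$,
\begin{equation*}
\langle dE_\mu(u),w\rangle=(\mathcal{J}_u^\mu)'(0)=sP_\mu(u)=0.
\end{equation*}
Third, differentiating the identity $(\mathcal{J}_u^\mu)'(\tau)=sP_\mu(\tau\star u)$ (recorded just before Lemma \ref{lem2-5}) at $\tau=0$ yields $\langle dG_2(u),w\rangle=\tfrac{1}{s}(\mathcal{J}_u^\mu)''(0)$. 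Substituting these into the Lagrange identity reduces it to $0=\nu\cdot\tfrac{1}{s}(\mathcal{J}_u^\mu)''(0)$.

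The conclusion $\nu=0$ is then immediate from the hypothesis $\mu<\mu_1$. Indeed, $u\in\mathcal{P}_{c,\mu}$ together with $\mathcal{P}_{c,\mu}^0=\emptyset$ (Lemma \ref{lem4-1}) force $u\in\mathcal{P}_{c,\mu}^+\cup\mathcal{P}_{c,\mu}^-$, so $(\mathcal{J}_u^\mu)''(0)$ is strictly positive or strictly negative, and in particular nonzero. Hence $\nu=0$ and the proof is complete.

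The only real obstacle is the applicability of the Lagrange multiplier rule at the beginning, which requires the linear independence of $dG_1(u)$ and $dG_2(u)$ and the fact that $\mathcal{P}_{c,\mu}$ is a smooth manifold of the expected codimension. Both of these facts are supplied cleanly by Lemma \ref{lem4-1}, so the argument reduces to the short scaling computation above and no genuinely new ingredient is required.
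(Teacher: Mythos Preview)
Your proof is correct and follows essentially the same strategy as the paper: apply the Lagrange multiplier rule on $\mathcal{P}_{c,\mu}$ and show the Pohozaev multiplier $\nu$ vanishes because $\mathcal{P}_{c,\mu}^{0}=\emptyset$. The only cosmetic difference is that the paper derives the key identity $\nu\cdot(\mathcal{J}_u^{\mu})''(0)=0$ by writing out the perturbed Euler--Lagrange equation explicitly and applying the Pohozaev identity (Lemma~\ref{lem2-2}), whereas you obtain it more directly by testing against the scaling generator $w=\tfrac{d}{d\tau}(\tau\star u)\big|_{\tau=0}$; these are equivalent computations.
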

\begin{proof}
If $u\in \mathcal{P}_{c, \mu}$ is a critical point for $E_{\mu}|_{\mathcal{P}_{c, \mu}}$, by the Lagrange multipliers rule, we know that there exist $\lambda, \nu\in\mathbb{R}$ such that
\begin{equation*}
\langle E'_{\mu}(u), \varphi\rangle-\lambda\int_{\mathbb{R}^{3}}u\varphi dx-\langle P'_{\mu}(u), \varphi\rangle=0,\quad \hbox{  $\forall\varphi\in H^{s}(\mathbb{R}^{3})$,}
\end{equation*}
that is
\begin{equation}\label{equ4-3}
\begin{split}
&\left[(1-2\nu)a+(1-4\nu)b\int_{\mathbb{R}^{3}}|(-\Delta)^{\frac{s}{2}}u|^{2}dx\right](-\Delta)^{s}u\\
&=\lambda u+\mu(1-\nu q\vartheta_{s, q})|u|^{q-2}u+(1-\nu p\vartheta_{s, p})|u|^{p-2}u  \quad \hbox{in $\mathbb{R}^3$.}
\end{split}
\end{equation}
By Lemma \ref{lem2-2}, we can get that
\begin{equation}\label{equ4-4}
\begin{split}
&\frac{3-2s}{2}(1-2\nu)a\int_{\mathbb{R}^{3}}|(-\Delta)^{\frac{s}{2}}u|^{2}dx+\frac{3-2s}{2}(1-4\nu)b\left(\int_{\mathbb{R}^{3}}|(-\Delta)^{\frac{s}{2}}u|^{2}dx\right)^{2}\\
&-\frac{3}{2}\lambda \int_{\mathbb{R}^{3}}u^{2}dx+\frac{3\mu(\nu q\vartheta_{s, q}-1)}{q}\int_{\mathbb{R}^{3}}|u|^{q}dx+\frac{3(\nu p\vartheta_{s, p}-1)}{q}\int_{\mathbb{R}^{3}}|u|^{p}dx=0 \quad \hbox{in $\mathbb{R}^3$.}
\end{split}
\end{equation}
By \eqref{equ4-3} and \eqref{equ4-4}, we have
\begin{equation}\label{equ4-5}
\begin{split}
&(1-2\nu)a\|(-\Delta)^{\frac{s}{2}}u\|_{2}^{2}+(1-4\nu)b\|(-\Delta)^{\frac{s}{2}}u\|_{2}^{4}\\
&+\mu\vartheta_{s, q}(\nu q\vartheta_{s, q}-1)\|u\|_{q}^{q}+\vartheta_{s, p}(\nu p\vartheta_{s, p}-1)\|u\|_{p}^{p}=0.
\end{split}
\end{equation}
By \eqref{equ4-5} and $P_{\mu}(u)=0$, we can get that
\begin{equation*}
\nu(2a\|(-\Delta)^{\frac{s}{2}}u\|_{2}^{2}+4b\|(-\Delta)^{\frac{s}{2}}u\|_{2}^{4}-\mu q\vartheta^{2}_{s, q}\|u\|_{q}^{q}-p\vartheta^{2}_{s, q}\|u\|_{p}^{p})=0,
\end{equation*}
which implies that $\nu=0$ since $u\not\in\mathcal{P}_{c, \mu}^{0}$.

\end{proof}

Note that by Lemma \ref{lem2-1}, we can get that
\begin{equation}\label{equ4-6}
\begin{split}
E_{\mu}(u)&\geq\frac{a}{2}\|(-\Delta)^{\frac{s}{2}}u\|_{2}^{2}+\frac{b}{4}\|(-\Delta)^{\frac{s}{2}}u\|_{2}^{4}
-\frac{\mu}{q}C^{q}(s, q)\|(-\Delta)^{\frac{s}{2}}u\|_{2}^{q\vartheta_{s, q}}c^{q(1-\vartheta_{s, q})}\\
&-\frac{1}{p}C^{p}(s, p)\|(-\Delta)^{\frac{s}{2}}u\|_{2}^{p\vartheta_{s, p}}c^{p(1-\vartheta_{s, p})}, \quad \hbox{  $\forall u\in M_{c}$,}
\end{split}
\end{equation}
for any $u\in S_{a}$. Therefore, in order to understand the geometry of the functional $E_{\mu}|_{M_{c}}$, we need to set the function $h: \mathbb{R}^{+}\rightarrow\mathbb{R}$ by
\begin{equation}\label{equ4-7}
h(t):=\frac{a}{2}t^{2}+\frac{b}{4}t^{4}-\frac{C^{p}(s, p)}{p}c^{p(1-\vartheta_{s, p})}t^{p\vartheta_{s, p}}-\frac{\mu C^{q}(s, q)}{q}c^{q(1-\vartheta_{s, q})}t^{q\vartheta_{s, q}}.
\end{equation}
Since $\mu>0$, $q\vartheta_{s, q}<2$ and $p\vartheta_{s, p}>4$, we can see that $h(0^{+})=0^{-}$ and $h(+\infty)=-\infty$.

\begin{lemma}\label{lem4-3}
Let $2<q<2+\frac{4s}{3}$, $2+\frac{8s}{3}<p<2_{s}^{\ast}$ and $0<\mu<\mu_{2}$. Then the function $h$ has a local strict minimum at negative level, a global strict maximum at positive level, and no other critical points, and there exist $0<R_{0}<R_{1}$, both depending on $c$ and $\mu$, such that $h(R_{0})=0=h(R_{1})$ and $h(t)>0$ iff $t\in(R_{0}, R_{1})$. Here $\mu_{2}$ is defined in \eqref{equ1-8}.
\end{lemma}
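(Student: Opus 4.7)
The plan is to recognize that $h$ fits the template of the auxiliary function $f$ studied in Lemma~\ref{lem2-8}, and to translate the abstract smallness condition of that lemma into the explicit bound $\mu<\mu_{2}$. Concretely, with the identification
$$\hat a=\tfrac{a}{2},\quad \hat b=\tfrac{b}{4},\quad \hat c=\tfrac{C^{p}(s,p)}{p}c^{p(1-\vartheta_{s,p})},\quad \hat d=\tfrac{\mu C^{q}(s,q)}{q}c^{q(1-\vartheta_{s,q})},\quad \hat p=p\vartheta_{s,p},\quad \hat q=q\vartheta_{s,q},$$
the definition \eqref{equ4-7} becomes exactly $h(t)=\hat a t^{2}+\hat b t^{4}-\hat c t^{\hat p}-\hat d t^{\hat q}$. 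The standing restrictions $p>2+\tfrac{8s}{3}$ and $q<2+\tfrac{4s}{3}$ translate to $\hat p\in(4,+\infty)$ and $\hat q\in(0,2)$, so the exponent hypotheses of Lemma~\ref{lem2-8} are fulfilled.

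The core step is to check that the bracket-product condition
$$\Bigl[\bigl(\tfrac{8(4-\hat q)}{\hat p(\hat p-2)(\hat p-\hat q)}\bigr)^{\frac{4-\hat q}{\hat p-4}}-\bigl(\tfrac{8(4-\hat q)}{\hat p(\hat p-2)(\hat p-\hat q)}\bigr)^{\frac{\hat p-\hat q}{\hat p-4}}\Bigr]\Bigl[\tfrac{\hat a}{\hat d}\bigl(\tfrac{\hat b}{\hat c}\bigr)^{\frac{2-\hat q}{\hat p-4}}+\tfrac{1}{\hat d}\tfrac{\hat b^{\frac{\hat p-\hat q}{\hat p-4}}}{\hat c^{\frac{4-\hat q}{\hat p-4}}}\Bigr]>1$$
of Lemma~\ref{lem2-8} is equivalent to $\mu<\mu_{2}$. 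I would factor $1/\hat d=q/(\mu C^{q}(s,q)c^{q(1-\vartheta_{s,q})})$ out of the second bracket, collect powers of $b$, $p$, $C^{p}(s,p)$, and $c^{p(1-\vartheta_{s,p})}$ inside, and observe that the remaining purely $p,q,s$-dependent factor in the first bracket is precisely $A(p,q,s)$. Matching the result against \eqref{equ1-8} turns the inequality above into $\mu<\mu_{2}$.

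Once Lemma~\ref{lem2-8} is activated, $h$ possesses on $(0,+\infty)$ a unique local minimum at some $t_{1}>0$ with $h(t_{1})<0$, a unique global maximum at some $t_{2}>t_{1}$ with $h(t_{2})>0$, and no further critical points. Combining this with the endpoint behavior $h(0^{+})=0^{-}$ (the $-\hat d t^{\hat q}$ term dominates near zero because $\hat q<2$ and $\hat d>0$) and $h(+\infty)=-\infty$ (the $-\hat c t^{\hat p}$ term dominates at infinity because $\hat p>4$), strict monotonicity of $h$ on $(t_{1},t_{2})$ and on $(t_{2},+\infty)$ forces exactly one zero $R_{0}\in(t_{1},t_{2})$ and exactly one zero $R_{1}\in(t_{2},+\infty)$. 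This gives $0<R_{0}<R_{1}$ with $h>0$ on $(R_{0},R_{1})$ and $h<0$ on $(0,R_{0})\cup(R_{1},+\infty)$.

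The main obstacle I anticipate is purely bookkeeping: rewriting the symmetric-looking bracket-product condition of Lemma~\ref{lem2-8} in the concrete $c$- and $\mu$-dependent form \eqref{equ1-8} requires careful tracking of the three exponents $\tfrac{2-q\vartheta_{s,q}}{p\vartheta_{s,p}-4}$, $\tfrac{4-q\vartheta_{s,q}}{p\vartheta_{s,p}-4}$, and $\tfrac{p\vartheta_{s,p}-q\vartheta_{s,q}}{p\vartheta_{s,p}-4}$ acting multiplicatively on $b$, $p$, $C^{p}(s,p)$, and $c^{p(1-\vartheta_{s,p})}$. Once that algebraic identification is in place, the shape analysis of $h$ and the counting of its zeros are immediate consequences of the endpoint behavior and the uniqueness of the two critical points.
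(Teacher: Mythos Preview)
Your proposal is correct and follows essentially the same route as the paper: both identify $h$ with the template function $f$ of Lemma~\ref{lem2-8} via the same substitutions $\hat a=\tfrac{a}{2}$, $\hat b=\tfrac{b}{4}$, $\hat c=\tfrac{C^{p}(s,p)}{p}c^{p(1-\vartheta_{s,p})}$, $\hat d=\tfrac{\mu C^{q}(s,q)}{q}c^{q(1-\vartheta_{s,q})}$, $\hat p=p\vartheta_{s,p}$, $\hat q=q\vartheta_{s,q}$, and both reduce the bracket-product hypothesis of that lemma to $\mu<\mu_2$. Your write-up is in fact slightly more explicit than the paper's, spelling out the endpoint behavior and the intermediate-value argument that produces the two zeros $R_0<R_1$, which the paper leaves implicit in ``the conclusion follows''.
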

\begin{proof}
Letting $\hat{a}=\frac{a}{2}$, $\hat{b}=\frac{b}{4}$, $\hat{c}=\frac{C^{p}(s, p)}{p}c^{p(1-\vartheta_{s, p})}$, $\hat{d}=\frac{\mu C^{q}(s, q)}{q}c^{q(1-\vartheta_{s, q})}$, $\hat{q}=q\vartheta_{s, q}$ and $\hat{p}=p\vartheta_{s, p}$ in Lemma \ref{lem2-8}, then we only prove that
\begin{equation*}
\left[(\frac{8(4-\hat{q})}{\hat{p}(\hat{p}-2)(\hat{p}-\hat{q})})^{\frac{4-\hat{q}}{\hat{p}-4}}-(\frac{8(4-\hat{q})}{\hat{p}(\hat{p}-2)(\hat{p}-\hat{q})})^{\frac{\hat{p}-\hat{q}}{\hat{p}-4}}\right]\left[\frac{\hat{a}}{\hat{d}}(\frac{\hat{b}}{\hat{c}})^{\frac{2-\hat{q}}{\hat{p}-4}}+\frac{1}{\hat{d}}\frac{\hat{b}^{\frac{\hat{p}-\hat{q}}{\hat{p}-4}}}{\hat{c}^{\frac{4-\hat{q}}{\hat{p}-4}}}\right]>1
\end{equation*}
holds. In fact, by simple calculations, we can see that the above inequality is equivalent to
\begin{equation*}
\mu<\frac{qA(p, q, s)}{C^{q}(s, q)}\left[\frac{\frac{a}{2}(\frac{bp}{4C^{p}(s, p)})^{\frac{2-q\vartheta_{s, q}}{p\vartheta_{s, p}-4}}}{c^{q(1-\vartheta_{s, q})+\frac{p(1-\vartheta_{s, p})(2-q\vartheta_{s, q})}{p\vartheta_{s, p}-4}}}+\frac{(\frac{b}{4})^{\frac{p\vartheta_{s, p}-q\vartheta_{s, q}}{p\vartheta_{s, p}-4}}(\frac{p}{C^{p}(s, p)})^{\frac{4-q\vartheta_{s, q}}{p\vartheta_{s, p}-4}}}{c^{q(1-\vartheta_{s, q})+\frac{p(1-\vartheta_{s, p})(4-q\vartheta_{s, q})}{p\vartheta_{s, p}-4}}}\right]=\mu_{2},
\end{equation*}
hence the conclusion follows provided $0<\mu<\mu_{2}$.
\end{proof}

\begin{lemma}\label{lem4-4}
Let $2<q<2+\frac{4s}{3}$, $2+\frac{8s}{3}<p<2_{s}^{\ast}$ and $0<\mu<\min\{\mu_{1}, \mu_{2}\}$. For any $u\in M_{c}$, the function $\mathcal{J}_{u}^{\mu}$ has exactly two critical points $s_{u}<t_{u}\in\mathbb{R}$ and two zeros $c_{u}<d_{u}\in\mathbb{R}$ with $s_{u}<c_{u}<t_{u}<d_{u}$. Moreover:\\
$(i)$ $s_{u}\star u\in\mathcal{P}_{c, \mu}^{+}$ and $t_{u}\star u\in\mathcal{P}_{c, \mu}^{-}$, and if $\tau\star u\in\mathcal{P}_{c, \mu}$, then either $\tau=s_{u}$ or $\tau=t_{u}$.\\
$(ii)$ $\|(-\Delta)^{\frac{s}{2}}(\tau\star u)\|_{2}\leq R_{0}$ for any $\tau\leq c_{u}$ and
\begin{equation}\label{equ4-8}
E_{\mu}(s_{u}\star u)=\min\{E_{\mu}(\tau\star u): \tau\in\mathbb{R} \quad and \quad \|(-\Delta)^{\frac{s}{2}}(\tau\star u)\|_{2}<R_{0}\}<0.
\end{equation}
$(iii)$ We have
\begin{equation}\label{equ4-9}
E_{\mu}(t_{u}\star u)=\max\{E_{\mu}(\tau\star u): \tau\in\mathbb{R}\}>0,
\end{equation}
and $\mathcal{J}_{u}^{\mu}$ is strictly decreasing on $(t_{u}, +\infty)$.\\
$(iv)$ The maps $u\in M_{c, r}\mapsto s_{u}\in\mathbb{R}$ and $u\in M_{c, r}\mapsto t_{u}\in\mathbb{R}$ are of class $C^{1}$.
\end{lemma}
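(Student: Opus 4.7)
The plan is to analyze the fiber map $\mathcal{J}_u^\mu$ by combining the scalar lower bound given by Lemma~\ref{lem4-3} with a direct analysis of the critical-point equation rewritten in a convenient auxiliary variable.

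First, I would set $A := \|(-\Delta)^{\frac{s}{2}}u\|_{2}$ and $B_r := \|u\|_r^r$ for $r \in \{q,p\}$. Since $q\vartheta_{s,q} < 2 < 4 < p\vartheta_{s,p}$, the four terms of $\mathcal{J}_u^\mu$ have distinct exponents in $e^{s\tau}$. As $\tau \to -\infty$ the smallest exponent $q\vartheta_{s,q}s$ dominates and has negative coefficient, so $\mathcal{J}_u^\mu(\tau) \to 0^-$ and $(\mathcal{J}_u^\mu)'(\tau) < 0$ for $\tau$ very negative; as $\tau \to +\infty$ the largest exponent dominates, again with negative coefficient, giving $\mathcal{J}_u^\mu(\tau) \to -\infty$. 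The fractional Gagliardo--Nirenberg inequality applied to $\tau\star u$ (which satisfies $\|\tau\star u\|_{2} = c$) yields
\begin{equation*}
\mathcal{J}_u^\mu(\tau) = E_\mu(\tau\star u) \geq h(e^{s\tau}A),
\end{equation*}
where $h$ is the function from Lemma~\ref{lem4-3}. Choosing any $\tau_0$ with $e^{s\tau_0}A \in (R_0,R_1)$ gives $\mathcal{J}_u^\mu(\tau_0) > 0$, so by continuity $\mathcal{J}_u^\mu$ has at least one local maximum at strictly positive level and, owing to its decreasing behavior near $-\infty$, also at least one local minimum at strictly negative level.

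Next I would show that $(\mathcal{J}_u^\mu)'$ has at most two zeros. Setting $z = e^{s\tau}$, $\alpha := q\vartheta_{s,q}$, $\beta := p\vartheta_{s,p}$, dividing the critical equation by $z^\alpha$ transforms it into $\psi(z) = \mu\vartheta_{s,q}B_q$ with
\begin{equation*}
\psi(z) := aA^2 z^{2-\alpha} + bA^4 z^{4-\alpha} - \vartheta_{s,p}B_p z^{\beta-\alpha}.
\end{equation*}
A direct differentiation gives
\begin{equation*}
\frac{\psi'(z)}{z^{1-\alpha}} = (2-\alpha)aA^2 + (4-\alpha)bA^4 z^2 - (\beta-\alpha)\vartheta_{s,p}B_p z^{\beta-2},
\end{equation*}
and, viewed as a function of $w = z^2$, this expression is strictly concave on $(0,\infty)$ because $(\beta-2)/2 > 1$, strictly positive at $w=0$, and tends to $-\infty$ as $w\to\infty$; hence it has exactly one zero on $(0,\infty)$. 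Consequently $\psi$ is unimodal and the equation $\psi = \mu\vartheta_{s,q}B_q$ has at most two solutions. Combined with the previous paragraph this gives exactly two critical points $s_u < t_u$ of $\mathcal{J}_u^\mu$. Because $\mathcal{P}_{c,\mu}^0 = \emptyset$ by Lemma~\ref{lem4-1}, both are non-degenerate, and the monotonicity picture forces $(\mathcal{J}_u^\mu)''(s_u)>0$ and $(\mathcal{J}_u^\mu)''(t_u)<0$; the translation identity $\mathcal{J}_{\sigma\star u}^\mu(\cdot) = \mathcal{J}_u^\mu(\sigma+\cdot)$ then yields $s_u\star u\in\mathcal{P}_{c,\mu}^+$ and $t_u\star u\in\mathcal{P}_{c,\mu}^-$, proving item (i).

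For item (ii), since $\mathcal{J}_u^\mu$ decreases on $(-\infty,s_u)$ from $0^-$, increases on $(s_u,t_u)$, and decreases on $(t_u,+\infty)$ to $-\infty$, and the values at the extrema have opposite signs, the intermediate value theorem produces a unique zero $c_u\in(s_u,t_u)$ and a unique zero $d_u\in(t_u,+\infty)$. The bound $e^{s\tau}A \leq R_0$ for $\tau\leq c_u$ follows by contradiction: if $e^{s\tau_1}A > R_0$ for some $\tau_1\leq c_u$, the strict monotonicity of $\tau\mapsto e^{s\tau}A$ forces this quantity to traverse the interval $(R_0,R_1)$, where $h > 0$, giving $\mathcal{J}_u^\mu > 0$ at that point and contradicting $\mathcal{J}_u^\mu \leq 0$ on $(-\infty,c_u]$. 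Then $\{\tau: e^{s\tau}A < R_0\}$ is a half-line $(-\infty,\tau_c)$ containing $s_u$; the same $h$-comparison forces $\mathcal{J}_u^\mu(\tau_c) \geq h(R_0) = 0$, and the monotonicity picture yields $\mathcal{J}_u^\mu(s_u) = \min_{\tau < \tau_c} \mathcal{J}_u^\mu(\tau) < 0$, which is \eqref{equ4-8}. Item (iii) is immediate from the monotonicity picture. Finally, item (iv) follows from the implicit function theorem applied to the $C^1$ map $(u,\tau)\mapsto (\mathcal{J}_u^\mu)'(\tau)$ on $M_{c,r}\times\mathbb{R}$, since $(\mathcal{J}_u^\mu)''(s_u)$ and $(\mathcal{J}_u^\mu)''(t_u)$ are nonzero by non-degeneracy. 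The main obstacle is the uniqueness count: the Kirchhoff term adds a fourth monomial $bA^4 z^{4-\alpha}$ to $\psi$, so one cannot copy the three-term argument from the Schrödinger setting; the key technical input replacing it is the strict concavity in the auxiliary variable $w = z^2$, which crucially exploits the exponent hierarchy $\alpha<2<4<\beta$ as highlighted in Remark~\ref{the1-5}.
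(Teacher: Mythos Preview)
Your proof is correct and follows essentially the same architecture as the paper's: both use the lower bound $\mathcal{J}_u^\mu(\tau)\ge h(e^{s\tau}\|(-\Delta)^{s/2}u\|_2)$ from Lemma~\ref{lem4-3} to produce at least two critical points, invoke $\mathcal{P}_{c,\mu}^0=\emptyset$ from Lemma~\ref{lem4-1} for non-degeneracy, read off the monotonicity picture for (ii)--(iii), and apply the implicit function theorem for (iv).

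The one substantive difference is the uniqueness count. The paper does not argue this directly; it simply says the step is ``similar to the argument of Lemma~6.6 in \cite{LLLL}'', which treats the three-term fractional Schr\"odinger fiber map. You instead give a self-contained argument tailored to the four-term Kirchhoff map: after the change of variable $z=e^{s\tau}$ and division by $z^{q\vartheta_{s,q}}$, the critical-point equation becomes $\psi(z)=\mathrm{const}$, and you show $\psi$ is unimodal by observing that $\psi'(z)/z^{1-q\vartheta_{s,q}}$, written in $w=z^2$, is strictly concave (the $-w^{(p\vartheta_{s,p}-2)/2}$ term dominates since $p\vartheta_{s,p}>4$), positive at $w=0$, and tends to $-\infty$. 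This is a clean way to absorb the extra Kirchhoff monomial that the reference \cite{LLLL} did not need to handle, and it makes the proof of Lemma~\ref{lem4-4} independent of that citation.
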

\begin{proof}
For any $u\in M_{c, r}$, by Lemma \ref{lem2-5}, we know that $\tau\star u\in\mathcal{P}_{c, \mu}$ if and only if $\tau$ is a critical point of $\mathcal{J}_{u}^{\mu}$. Now, we prove that $\mathcal{J}_{u}^{\mu}$ has at least two critical points. Indeed, by \eqref{equ4-6}-\eqref{equ4-7}, we can see that
\begin{equation*}
\begin{split}
\mathcal{J}_{u}^{\mu}(\tau)=E_{\mu}(\tau\star u)\geq h(\|(-\Delta)^{\frac{s}{2}}(\tau\star u)\|_{2})=h(e^{s\tau}\|(-\Delta)^{\frac{s}{2}}u\|_{2})
\end{split}
\end{equation*}
By Lemma \ref{lem4-3}, we have that $h>0$ on an open interval $(R_{0}, R_{1})$. Hence, $\mathcal{J}_{u}^{\mu}$ is positive on $(C(R_{0}), C(R_{1}))$, with $(C(R_{0}), C(R_{1})):=(\frac{1}{s}\ln(R_{0}/\|(-\Delta)^{\frac{s}{2}}u\|_{2}), \frac{1}{s}\ln(R_{1}/\|(-\Delta)^{\frac{s}{2}}u\|_{2}))$. Moreover, it is easy to see that $\mathcal{J}_{u}^{\mu}(-\infty)=0^{-}$, $\mathcal{J}_{u}^{\mu}(+\infty)=-\infty$. Then, we can see that $\mathcal{J}_{u}^{\mu}$ has at least two critical
points $s_{u}<t_{u}$, more precisely, $s_{u}\in(0, C(R_{0}))$ is a local minimum point at the negative level and $t_{u}>s_{u}$ is a global maximum point at the positive level. Further, similar to the argument of Lemma 6.6 in \cite{LLLL}, we can see that $\mathcal{J}_{u}^{\mu}$ has no other critical points. Next, let's verify that $(i)-(iv)$ holds.

$(iii)$ According to the above analysis, we know that $\mathcal{J}_{u}^{\mu}$ has exactly two critical points: $s_{u}$ is local minimum on $(-\infty, C(R_{0}))$ at the negative level and $t_{u}$ is a global maximum at the positive level, which give \eqref{equ4-9}. Moreover, by monotonicity of $\mathcal{J}_{u}^{\mu}$, $\mathcal{J}_{u}^{\mu}(-\infty)=0^{-}$ and $\mathcal{J}_{u}^{\mu}(+\infty)=-\infty$, we can get that $\mathcal{J}_{u}^{\mu}$ has exactly two zeros $c_{u}<d_{u}$, with $s_{u}<c_{u}<t_{u}<d_{u}$ and $\mathcal{J}_{u}^{\mu}$ is strictly decreasing on $(t_{u}, +\infty)$.

$(ii)$ Since $s_{u}<C(R_{0})$, by directly computation, we deduce that
\begin{equation*}
\begin{split}
&\|(-\Delta)^{\frac{s}{2}}(s_{u}\star u)\|_{2}=e^{ss_{u}}\|(-\Delta)^{\frac{s}{2}}u\|_{2}\\
&<e^{sC(R_{0})}\|(-\Delta)^{\frac{s}{2}}u\|_{2}=R_{0},
\end{split}
\end{equation*}
which implies that \eqref{equ4-8} holds.

$(i)$ By Lemma \ref{lem2-5}, $(\mathcal{J}_{u}^{\mu})'(s_{u})=(\mathcal{J}_{u}^{\mu})'(t_{u})=0$, we can get $s_{u}\star u\in\mathcal{P}_{c, \mu}$ and  $t_{u}\star u\in\mathcal{P}_{c, \mu}$, since $\mathcal{J}_{u}^{\mu}$ has exactly two critical points, so, if $\tau\star u\in\mathcal{P}_{c, \mu}$, then $\tau\in\{s_{u}, t_{u}\}$. Moreover, since $\mathcal{P}_{c, \mu}^{0}=\emptyset$ by Lemma \ref{lem4-1}, then $(\mathcal{J}_{u}^{\mu})''(s_{u})>0$, thai is $s_{u}\star u\in\mathcal{P}_{c, \mu}^{+}$. In the same way, we can get $t_{u}\star u\in\mathcal{P}_{c, \mu}^{-}$.

$(iv)$ We apply the implicit function theorem on the $C^{1}$ function $\Phi(\tau, u)=(\mathcal{J}_{u}^{\mu})'(\tau)$. We use that $\Phi(\tau, u)=0$, that $\partial_{s}\Phi(s_{u}, u)=(\mathcal{J}_{u}^{\mu})''(s_{u})>0$ and the fact that it is not possible to pass with continuity from $\mathcal{P}_{c, \mu}^{+}$ to $\mathcal{P}_{c, \mu}^{-}$ since $\mathcal{P}_{c, \mu}^{0}=\emptyset$. Hence, $u\mapsto s_{u}$ is of class $C^{1}$. The same argument proves that $u\mapsto t_{u}$ is of class $C^{1}$.
\end{proof}

For $k>0$, set
\begin{equation*}
A_{k}:=\{u\in M_{c}: \|(-\Delta)^{\frac{s}{2}}u\|_{2}<k\} \quad and \quad m(c, \mu):=\inf_{u\in A_{R_{0}}}E_{\mu}(u).
\end{equation*}

From Lemma \ref{lem4-4}, we have
\begin{lemma}\label{lem4-5}
Let $2<q<2+\frac{4s}{3}$, $2+\frac{8s}{3}<p<2_{s}^{\ast}$ and $0<\mu<\min\{\mu_{1}, \mu_{2}\}$. Then $\mathcal{P}_{c, \mu}^{+}\subseteq\{u\in M_{c}: \|(-\Delta)^{\frac{s}{2}}u\|_{2}<R_{0}\}$ and $\sup_{\mathcal{P}_{c, \mu}^{+}}E_{\mu}\leq0\leq\inf_{\mathcal{P}_{c, \mu}^{-}}E_{\mu}$.
\end{lemma}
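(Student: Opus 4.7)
The entire lemma is a direct corollary of the complete description of the fiber map $\mathcal{J}_u^\mu$ obtained in Lemma~\ref{lem4-4}, combined with $\mathcal{P}_{c,\mu}^0=\emptyset$ from Lemma~\ref{lem4-1}. The organizing observation is that, via Lemma~\ref{lem2-5}, membership of $u$ in $\mathcal{P}_{c,\mu}^+$ (resp.\ $\mathcal{P}_{c,\mu}^-$) is equivalent to $\tau=0$ being the distinguished critical point $s_u$ (resp.\ $t_u$) of $\mathcal{J}_u^\mu$. Once that identification is made, both assertions follow by reading off properties already catalogued in Lemma~\ref{lem4-4}.

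More precisely, first I would take $u\in\mathcal{P}_{c,\mu}^+$. By Lemma~\ref{lem2-5}, $0$ is a critical point of $\mathcal{J}_u^\mu$, and by Lemma~\ref{lem4-4} this function has exactly the two critical points $s_u<t_u$, with $s_u\star u\in\mathcal{P}_{c,\mu}^+$ and $t_u\star u\in\mathcal{P}_{c,\mu}^-$. Since $\mathcal{P}_{c,\mu}^+\cap\mathcal{P}_{c,\mu}^-=\emptyset$ (a consequence of $\mathcal{P}_{c,\mu}^0=\emptyset$ from Lemma~\ref{lem4-1} and the disjoint union $\mathcal{P}_{c,\mu}=\mathcal{P}_{c,\mu}^+\cup\mathcal{P}_{c,\mu}^0\cup\mathcal{P}_{c,\mu}^-$), this forces $s_u=0$. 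Now, because $s_u<c_u$ and the map $\tau\mapsto\|(-\Delta)^{s/2}(\tau\star u)\|_2=e^{s\tau}\|(-\Delta)^{s/2}u\|_2$ is strictly increasing, Lemma~\ref{lem4-4}(ii) gives
\[
\|(-\Delta)^{s/2}u\|_2=\|(-\Delta)^{s/2}(s_u\star u)\|_2<R_0,
\]
proving the inclusion $\mathcal{P}_{c,\mu}^+\subseteq A_{R_0}$. Moreover, the identification $s_u=0$ together with \eqref{equ4-8} gives $E_\mu(u)=E_\mu(s_u\star u)<0$, hence $\sup_{\mathcal{P}_{c,\mu}^+}E_\mu\leq 0$.

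The second inequality is handled by the symmetric argument. For $u\in\mathcal{P}_{c,\mu}^-$, repeating the reasoning forces $t_u=0$, and then \eqref{equ4-9} yields $E_\mu(u)=E_\mu(t_u\star u)>0$, so $\inf_{\mathcal{P}_{c,\mu}^-}E_\mu\geq 0$. There is essentially no substantive obstacle: every step is bookkeeping on top of Lemma~\ref{lem4-4}, and the assumption $0<\mu<\min\{\mu_1,\mu_2\}$ enters only implicitly through the hypotheses of Lemmas~\ref{lem4-1}, \ref{lem4-3}, and \ref{lem4-4}. The only mild point of care is to note that parts (i)--(iii) of Lemma~\ref{lem4-4} hold for arbitrary $u\in M_c$ (the radial restriction is required only for the $C^1$-regularity in (iv)), so no passage to $M_{c,r}$ is necessary in this lemma.
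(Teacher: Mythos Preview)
Your proposal is correct and follows exactly the approach the paper intends: the paper gives no separate proof for Lemma~\ref{lem4-5} but merely prefaces it with ``From Lemma~\ref{lem4-4}, we have'', and your argument is precisely the bookkeeping needed to extract the statement from Lemma~\ref{lem4-4} via the identification $s_u=0$ (resp.\ $t_u=0$) for $u\in\mathcal{P}_{c,\mu}^+$ (resp.\ $\mathcal{P}_{c,\mu}^-$). Your remark that the strict inequality $\|(-\Delta)^{s/2}u\|_2<R_0$ follows from $s_u<c_u$ together with the monotonicity of $\tau\mapsto e^{s\tau}\|(-\Delta)^{s/2}u\|_2$ is exactly the point needed, and your observation about parts (i)--(iii) of Lemma~\ref{lem4-4} holding on all of $M_c$ is accurate.
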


Moreover, we have
\begin{lemma}\label{lem4-6}
Let $2<q<2+\frac{4s}{3}$, $2+\frac{8s}{3}<p<2_{s}^{\ast}$ and $0<\mu<\min\{\mu_{1}, \mu_{2}\}$. It results that $m(c, \mu)\in(-\infty, 0)$ and
\begin{equation*}
m(c, \mu)=\inf_{\mathcal{P}_{c, \mu}}E_{\mu}=\inf_{\mathcal{P}_{c, \mu}^{+}}E_{\mu} \quad and \quad m(c, \mu)<\inf_{\overline{A_{R_{0}}}\backslash A_{R_{0}-\rho}}E_{\mu}(u).
\end{equation*}
for $\rho>0$ small enough.
\end{lemma}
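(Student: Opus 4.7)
\medskip

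\noindent\textbf{Proof proposal.}
The plan is to verify the three assertions of the lemma in sequence, relying on the fiber‐map analysis of Lemma \ref{lem4-4}, the level inequality of Lemma \ref{lem4-5}, and the bound $E_{\mu}(u)\ge h(\|(-\Delta)^{s/2}u\|_{2})$ coming from \eqref{equ4-6}--\eqref{equ4-7}. Throughout, $h$ has the shape described in Lemma \ref{lem4-3}: $h<0$ on $(0,R_{0})$, $h(R_{0})=0$, $h>0$ on $(R_{0},R_{1})$.

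First I would show $m(c,\mu)\in(-\infty,0)$. For the lower bound, every $u\in A_{R_{0}}$ satisfies $E_{\mu}(u)\ge h(\|(-\Delta)^{s/2}u\|_{2})$; since $h$ is continuous on the compact interval $[0,R_{0}]$, it is bounded below there, hence so is $E_{\mu}$ on $A_{R_{0}}$. For the strict upper bound $m(c,\mu)<0$, I pick any $u\in M_{c,r}$ and apply Lemma \ref{lem4-4}(ii): $s_{u}\star u\in\mathcal{P}_{c,\mu}^{+}\subseteq A_{R_{0}}$ (the inclusion is Lemma \ref{lem4-5}) and $E_{\mu}(s_{u}\star u)<0$. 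Therefore $m(c,\mu)\le E_{\mu}(s_{u}\star u)<0$.

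Next I would establish the double identity $m(c,\mu)=\inf_{\mathcal{P}_{c,\mu}}E_{\mu}=\inf_{\mathcal{P}_{c,\mu}^{+}}E_{\mu}$. The inequality $\inf_{\mathcal{P}_{c,\mu}^{+}}E_{\mu}\ge m(c,\mu)$ is immediate from $\mathcal{P}_{c,\mu}^{+}\subseteq A_{R_{0}}$. Conversely, for $u\in A_{R_{0}}$ the value $\tau=0$ lies in the admissible set in \eqref{equ4-8}, so $E_{\mu}(s_{u}\star u)\le E_{\mu}(u)$; since $s_{u}\star u\in\mathcal{P}_{c,\mu}^{+}$, taking the infimum over $u\in A_{R_{0}}$ yields $\inf_{\mathcal{P}_{c,\mu}^{+}}E_{\mu}\le m(c,\mu)$. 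Using the decomposition $\mathcal{P}_{c,\mu}=\mathcal{P}_{c,\mu}^{+}\cup\mathcal{P}_{c,\mu}^{-}$ (since $\mathcal{P}_{c,\mu}^{0}=\emptyset$ by Lemma \ref{lem4-1}) together with Lemma \ref{lem4-5}, which gives $\inf_{\mathcal{P}_{c,\mu}^{-}}E_{\mu}\ge 0$, and the fact that $\inf_{\mathcal{P}_{c,\mu}^{+}}E_{\mu}=m(c,\mu)<0$, the global infimum over $\mathcal{P}_{c,\mu}$ is attained on $\mathcal{P}_{c,\mu}^{+}$.

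For the last assertion I would use continuity of $h$ near $R_{0}$. Since $h(R_{0})=0>m(c,\mu)$, there exists $\rho>0$ such that $h(t)\ge \frac{1}{2}m(c,\mu)>m(c,\mu)$ for every $t\in[R_{0}-\rho,R_{0}]$. For any $u\in\overline{A_{R_{0}}}\setminus A_{R_{0}-\rho}$ one has $R_{0}-\rho\le\|(-\Delta)^{s/2}u\|_{2}\le R_{0}$, so by \eqref{equ4-6}
\begin{equation*}
E_{\mu}(u)\ge h\bigl(\|(-\Delta)^{s/2}u\|_{2}\bigr)\ge \tfrac{1}{2}m(c,\mu)>m(c,\mu),
\end{equation*}
and taking the infimum over this annular region yields the desired strict separation. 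The only delicate point is the negativity of $m(c,\mu)$, which is what makes the continuity argument on $h$ near $R_{0}$ produce a strict gap; everything else is a direct rearrangement of what was already proved in Lemmas \ref{lem4-1}--\ref{lem4-5}.
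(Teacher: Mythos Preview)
Your proof is correct and follows essentially the same route as the paper's: the lower bound via $E_{\mu}\ge h$, the negativity of $m(c,\mu)$, the two-sided inequality between $m(c,\mu)$ and $\inf_{\mathcal{P}_{c,\mu}^{+}}E_{\mu}$ through Lemma~\ref{lem4-4}(ii) and Lemma~\ref{lem4-5}, and the continuity argument on $h$ near $R_{0}$ for the annular estimate are all exactly what the paper does. The only cosmetic difference is that for $m(c,\mu)<0$ the paper recomputes $E_{\mu}(\tau\star u)$ directly as $\tau\to -\infty$, whereas you invoke the strict negativity already recorded in \eqref{equ4-8}; both are equivalent.
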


\begin{proof}
For $u\in A_{R_{0}}$, we have
\begin{equation*}
E_{\mu}(u)\geq h(\|(-\Delta)^{\frac{s}{2}}u\|_{2})\geq\min_{t\in[0, R_{0}]}h(t)>-\infty,
\end{equation*}
which implies that $m(c, \mu)>-\infty$. Note that $u\in M_{c}$, then $\|(-\Delta)^{\frac{s}{2}}(\tau\star u)\|_{2}=e^{s\tau}\|(-\Delta)^{\frac{s}{2}}u\|_{2}<R_{0}$ for $\tau\ll-1$. Hence
\begin{equation*}
\begin{split}
E_{\mu}(\tau\star u)
&=\frac{ae^{2s\tau}}{2}\|(-\Delta)^{\frac{s}{2}}u\|_{2}^{2}+\frac{be^{4s\tau}}{4}\|(-\Delta)^{\frac{s}{2}}u\|_{2}^{4}\\
&-\mu\frac{e^{3\tau(\frac{q}{2}-1)}}{q}\|u\|_{q}^{q}-\frac{e^{3\tau(\frac{p}{2}-1)}}{p}\|u\|_{p}^{p}<0, \quad \hbox{ $\tau\ll-1$,}
\end{split}
\end{equation*}
which implies that $m(c, \mu)<0$.

Since $\mathcal{P}_{c, \mu}^{+}\subseteq A_{R_{0}}$ by Lemma \ref{lem4-5}, then $m(c, \mu)\leq\inf_{\mathcal{P}_{a, \mu}^{+}}E_{\mu}$. In addition, from Lemma \ref{lem4-4}, if $u\in A_{R_{0}}$, then $s_{u}\star u\in\mathcal{P}_{c, \mu}^{+}\subseteq A_{R_{0}}$ and
\begin{equation*}
E_{\mu}(s_{u}\star u)=\min\{E_{\mu}(\tau\star u): \tau\in\mathbb{R} \quad and \quad \|(-\Delta)^{\frac{s}{2}}(\tau\star u)\|_{2}<R_{0}\}\leq E_{\mu}(u),
\end{equation*}
which implies that $\inf_{\mathcal{P}_{c, \mu}^{+}}E_{\mu}\leq m(c, \mu)$. On the other hand, since $E_{\mu}(u)>0$ on $\mathcal{P}_{c, \mu}^{-}$, so it is easy to check that $\inf_{\mathcal{P}_{c, \mu}^{+}}E_{\mu}=\inf_{\mathcal{P}_{c, \mu}}E_{\mu}$.

Now, according to the continuity of $h$, there exists $\rho>0$ such that $h(t)\geq\frac{m(c, \mu)}{2}$ if $t\in[R_{0}-\rho, R_{0}]$. Therefore, we can see that
\begin{equation*}
E_{\mu}(u)\geq h(\|(-\Delta)^{\frac{s}{2}}u\|_{2})\geq\frac{m(c, \mu)}{2}>m(c, \mu),
\end{equation*}
for $u\in M_{c}$ with $R_{0}-\rho\leq\|(-\Delta)^{\frac{s}{2}}u\|_{2}\leq R_{0}$, that is $m(c, \mu)<\inf_{\overline{A_{R_{0}}}\backslash A_{R_{0}-\rho}}E_{\mu}(u)$.
\end{proof}

\begin{lemma}\label{lem4-7}
Let $2<q<2+\frac{4s}{3}$, $2+\frac{8s}{3}<p<2_{s}^{\ast}$ and $0<\mu<\min\{\mu_{1}, \mu_{2}\}$. Suppose that $E_{\mu}(u)<m(c, \mu)$. Then the value $t_{u}$ defined by Lemma \ref{lem4-4} is negative.
\end{lemma}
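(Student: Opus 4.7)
The plan is to argue by contradiction: assume $t_u \geq 0$, and derive $E_\mu(u) \geq m(c, \mu)$, which contradicts the hypothesis $E_\mu(u) < m(c, \mu)$. The proof will be short and will draw entirely on the monotonicity structure of the fiber map $\mathcal{J}_u^\mu$ established in Lemma \ref{lem4-4} together with the identifications of $m(c, \mu)$ given in Lemmas \ref{lem4-5} and \ref{lem4-6}.

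First I would record the ingredients I need. From Lemma \ref{lem4-4}, $\mathcal{J}_u^\mu$ has only the two critical points $s_u < t_u$, with $s_u$ a strict local minimum and $t_u$ the global maximum; combined with $\mathcal{J}_u^\mu(-\infty)=0^-$ and $\mathcal{J}_u^\mu(+\infty)=-\infty$, this forces $\mathcal{J}_u^\mu$ to be strictly increasing on $[s_u, t_u]$. Lemma \ref{lem4-5} gives $s_u \star u \in \mathcal{P}_{c, \mu}^+ \subseteq A_{R_0}$, so in particular $\|(-\Delta)^{s/2}(s_u \star u)\|_2 < R_0$. Lemma \ref{lem4-6} gives $E_\mu(s_u \star u) \geq \inf_{\mathcal{P}_{c, \mu}^+} E_\mu = m(c, \mu)$.

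Next I would split on the sign of $s_u$. If $s_u \geq 0$, then
\begin{equation*}
\|(-\Delta)^{s/2}u\|_2 \leq e^{s s_u}\|(-\Delta)^{s/2}u\|_2 = \|(-\Delta)^{s/2}(s_u \star u)\|_2 < R_0,
\end{equation*}
so $u \in A_{R_0}$ and hence $E_\mu(u) \geq m(c, \mu)$, contradicting the hypothesis. If instead $s_u < 0 \leq t_u$, then $0 \in [s_u, t_u]$, and because $\mathcal{J}_u^\mu$ is increasing on that interval,
\begin{equation*}
E_\mu(u) = \mathcal{J}_u^\mu(0) \geq \mathcal{J}_u^\mu(s_u) = E_\mu(s_u \star u) \geq m(c, \mu),
\end{equation*}
again a contradiction. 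In either case we reach the desired contradiction, so $t_u < 0$.

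There is no serious obstacle here: the only subtle point is to observe that the strict monotonicity of $\mathcal{J}_u^\mu$ on $[s_u, t_u]$ (which is not stated explicitly in Lemma \ref{lem4-4} but follows from having only two critical points, with the left one a local minimum and the right one the global maximum) lets us compare $\mathcal{J}_u^\mu(0)$ with $\mathcal{J}_u^\mu(s_u)$ whenever $0 \in [s_u, t_u]$. Once this is noted, the argument reduces to the two quick cases above.
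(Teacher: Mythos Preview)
Your proof is correct and follows essentially the same approach as the paper, which simply refers to Lemma~6.10 in \cite{LLLL}; that argument is precisely the contradiction via the monotonicity of $\mathcal{J}_u^\mu$ on $[s_u,t_u]$ together with $m(c,\mu)=\inf_{\mathcal{P}_{c,\mu}^{+}}E_\mu$ that you have written out.
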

\begin{proof}
The proof is similar to that of Lemma 6.10 in \cite{LLLL}.

\end{proof}

\begin{lemma}\label{lem4-8}
Let $2<q<2+\frac{4s}{3}$, $2+\frac{8s}{3}<p<2_{s}^{\ast}$ and $0<\mu<\min\{\mu_{1}, \mu_{2}\}$. It holds that
\begin{equation*}
\tilde{\sigma}(c, \mu):=\inf_{\mathcal{P}_{c, \mu}^{-}}E_{\mu}>0.
\end{equation*}
\end{lemma}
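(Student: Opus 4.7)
The plan is to reduce the bound to the profile of the auxiliary function $h$ from \eqref{equ4-7}. By Lemma \ref{lem4-3}, under $0<\mu<\mu_2$, the function $h$ has a global strict maximum at some point $T_\ast\in(R_0,R_1)$ with $h(T_\ast)>0$. The crucial point is that $T_\ast$ and $h(T_\ast)$ depend only on $c$ and $\mu$, not on any particular $u\in\mathcal{P}_{c,\mu}^-$, so they furnish a uniform positive floor.

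The key observation is a two-step chain. First, by Lemma \ref{lem4-4}$(i)$--$(iii)$, for every $u\in\mathcal{P}_{c,\mu}^-$ one has $t_u=0$ (since $0\star u=u\in\mathcal{P}_{c,\mu}^-$ and $\mathcal{P}_{c,\mu}^+\cap\mathcal{P}_{c,\mu}^-=\emptyset$ by Lemma \ref{lem4-1}), and hence
$$E_\mu(u)=E_\mu(t_u\star u)=\max_{\tau\in\mathbb{R}}E_\mu(\tau\star u).$$
Second, since $\tau\mapsto\|(-\Delta)^{s/2}(\tau\star u)\|_2=e^{s\tau}\|(-\Delta)^{s/2}u\|_2$ is a bijection of $\mathbb{R}$ onto $(0,+\infty)$, I choose
$$\tilde\tau:=\tfrac{1}{s}\ln\bigl(T_\ast/\|(-\Delta)^{s/2}u\|_2\bigr),$$
so that $\|(-\Delta)^{s/2}(\tilde\tau\star u)\|_2=T_\ast$. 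Because $\tilde\tau\star u\in M_c$, the fractional Gagliardo--Nirenberg bound leading to \eqref{equ4-6} applies to $\tilde\tau\star u$ and yields
$$E_\mu(u)\;\ge\;E_\mu(\tilde\tau\star u)\;\ge\;h\bigl(\|(-\Delta)^{s/2}(\tilde\tau\star u)\|_2\bigr)\;=\;h(T_\ast)\;>\;0.$$
Passing to the infimum over $u\in\mathcal{P}_{c,\mu}^-$ then gives $\tilde\sigma(c,\mu)\ge h(T_\ast)>0$, which is the claim.

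I do not anticipate any substantive obstacle: once the fiber-map analysis of Lemma \ref{lem4-4} pins down $t_u=0$ as the global maximizer on $\mathcal{P}_{c,\mu}^-$, the argument only needs a single cleverly chosen dilation parameter $\tilde\tau$, and the universal height $h(T_\ast)$ provided by Lemma \ref{lem4-3} does the rest. The one point that requires a line of justification is verifying that the same $T_\ast$ works for every $u\in\mathcal{P}_{c,\mu}^-$, which is immediate because $h$ depends only on $c$ and $\mu$ while the dilation $\tilde\tau$ is tuned to $u$.
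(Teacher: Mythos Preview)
Your proposal is correct and follows essentially the same approach as the paper (which simply refers to Lemma~6.11 in \cite{LLLL}): for $u\in\mathcal{P}_{c,\mu}^{-}$ one has $t_u=0$ by Lemma~\ref{lem4-4}, hence $E_\mu(u)=\max_{\tau\in\mathbb{R}}E_\mu(\tau\star u)\ge \max_{t>0}h(t)=h(T_\ast)>0$ via \eqref{equ4-6} and Lemma~\ref{lem4-3}. Your explicit choice of $\tilde\tau$ just makes the last inequality concrete.
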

\begin{proof}
The proof is similar to that of Lemma 6.11 in \cite{LLLL}.

\end{proof}

Now, we define the minimax class
\begin{equation*}
\Gamma:=\{\gamma\in C([0, 1], M_{c, r}): \gamma(0)\in\mathcal{P}_{c, \mu}^{+}, E_{\mu}(\gamma(1))\leq2m(c, \mu)\}.
\end{equation*}
Clearly, $\Gamma\neq\emptyset$. In fact, for any $u\in M_{c, r}$, we have $s_{u}\star u\in\mathcal{P}_{c, \mu}^{+}$ by Lemma \ref{lem4-4}, $E_{\mu}(\tau\star u)\rightarrow-\infty$ as $\tau\rightarrow+\infty$ and $\tau\mapsto\tau\star u$ is continuous. Then, we only need to let $\gamma(t)=\tau_{t}\star u$.

Next, we can define the minimax value
\begin{equation*}
\sigma(c, \mu)=\inf_{\gamma\in\Gamma}\max_{t\in[0, 1]}E_{\mu}(\gamma(t)).
\end{equation*}

\text{\bf Proof of Theorem \ref{the1-1}-(1).}Let $\{v_{n}\}$ be a minimizing sequence for $m(c, \mu):=\inf_{u\in A_{R_{0}}}E_{\mu}(u)$. By Lemma \ref{lem2-3}, we get another function in $A_{R_{0}}$ with $E_{\mu}(|v_{n}|^{\ast})\leq E_{\mu}(v_{n})$. Thus, we can assume that $v_{n}\in M_{c}$ is  nonnegative and radially decreasing for every $n$. By Lemma \ref{lem4-4}, there exists a sequence $s_{v_{n}}$ such that $s_{v_{n}}\star v_{n}\in\mathcal{P}_{a, \mu}^{+}$ and
\begin{equation*}
E_{\mu}(s_{v_{n}}\star v_{n})=\min\{E_{\mu}(\tau\star v_{n}): \tau\in\mathbb{R} \quad and \quad \|(-\Delta)^{\frac{s}{2}}(\tau\star v_{n})\|_{2}<R_{0}\}<E_{\mu}(v_{n}),
\end{equation*}
and
\begin{equation*}
\|(-\Delta)^{\frac{s}{2}}(s_{v_{n}}\star v_{n})\|_{2}< R_{0}.
\end{equation*}

Let $u_{n}=s_{v_{n}}\star v_{n}\subset A_{R_{0}}$. By Lemma \ref{lem4-6}, we have $\|(-\Delta)^{\frac{s}{2}}(s_{v_{n}}\star v_{n})\|_{2}< R_{0}-\rho$. Using the Ekeland's variational principle, we may assume that $\{u_{n}\}$ is a Palais-Smale sequence for $E_{\mu}|_{M_{c, r}}$ and $P_{\mu}(u_{n})=0$. So, $\{u_{n}\}$ satisfies all the assumptions of Proposition \ref{lem3-1}. Therefore, up to a subsequence, $u_{n}\rightarrow\tilde{u}_{\mu}$ in $H^{s}(\mathbb{R}^{3})$, where $\tilde{u}_{\mu}$ is an interior local minimizer for $E_{\mu}|_{A_{R_{0}}}$ and $\tilde{u}_{\mu}$ is a radial solution to \eqref{equ1-1} for some $\tilde{\lambda}_{\mu}<0$. It is easy to know that $\tilde{u}_{\mu}$ is nonnegative and radially deceasing. Assume that there exists $x_{0}\in\mathbb{R}^{3}$ such that $\tilde{u}_{\mu}(x_{0})=0$, then from $\tilde{u}_{\mu}\geq0$ and $\tilde{u}_{\mu}\not\equiv0$, we get
\begin{equation*}
(-\Delta)^{s}\tilde{u}_{\mu}(x_{0})=-\frac{1}{2}C(3, s)\int_{\mathbb{R}^{3}}\frac{\tilde{u}_{\mu}((x_{0})+y)+\tilde{u}_{\mu}((x_{0})-y)}{|x_{0}-y|^{3+2s}}dxdy<0.
\end{equation*}
However, it is easy to see that
\begin{equation*}
(-\Delta)^{s}\tilde{u}_{\mu}(x_{0})=\frac{(\lambda \tilde{u}_{\mu}(x_{0})+\mu(\tilde{u}_{\mu}(x_{0}))^{q-2}\tilde{u}_{\mu}(x_{0})+(\tilde{u}_{\mu}(x_{0}))^{p-2}\tilde{u}_{\mu}(x_{0}))}{a+b\int_{\mathbb{R}^{3}}|(-\Delta)^{\frac{s}{2}}\tilde{u}_{\mu}(x_{0})|^{2}dx}=0,
\end{equation*}
which gives a contradiction. Thus, $\tilde{u}_{\mu}>0$. By Lemma \ref{lem4-6}, we know that $\tilde{u}_{\mu}$ is a ground state.

Finally, we need to show that any other ground state is a local minimizer for $E_{\mu}$ on $A_{R_{0}}$. In fact, let $u$ be a critical point of $E_{\mu}|_{M_{c}}$ with $E_{\mu}(u)=m(c, \mu)=\inf_{\mathcal{P}_{c, \mu}}E_{\mu}$. Since $E_{\mu}(u)<0<\inf_{\mathcal{P}_{c, \mu}^{-}}E_{\mu}$, necessarily $u\in\mathcal{P}_{c, \mu}^{+}$. Then we can get that $\mathcal{P}_{c, \mu}^{+}\subset A_{R_{0}}$ by Lemma \ref{lem4-5}. Therefore, $\|(-\Delta)^{\frac{s}{2}}u\|_{2}\leq R_{0}$, and as a consequence $u$ is a local minimizer for $E_{\mu}|_{A_{R_{0}}}$.

\text{\bf Proof of Theorem \ref{the1-1}-(2).}We divide the proof into five steps.

{\bf Step 1.}\ In order to use Lemma \ref{lem2-7}, let us set
\begin{equation*}
\mathcal{F}:=\Gamma, \quad A:=\gamma([0, 1]), \quad F:=\mathcal{P}^{-}_{c, \mu}, \quad B:=\mathcal{P}^{+}_{c, \mu}\cup E_{\mu}^{2m(c, \mu)},
\end{equation*}
where $E_{\mu}^{2m(c, \mu)}:=\{u\in M_{c, r}: E_{\mu}(u)\leq2m(c, \mu)\}$. Now, according to Definition \ref{lem2-6}, we show that $\mathcal{F}$ is homotopy-stable family with extended boundary $B$: for any $\gamma\in\Gamma$ and $\eta\in C([0, 1]\times M_{c, r}, M_{c, r})$ satisfying $\eta(t, u)=u$, $(t, u)\in(0\times M_{c, r})\cup([0, 1]\times B)$, we want to get $\eta(1, \gamma(t))\in\Gamma$. In fact, let $\tilde{\gamma}(t)=\eta(1, \gamma(t))$, then $\tilde{\gamma}(0)=\eta(1, \gamma(0))=\gamma(0)\in \mathcal{P}_{c, \mu}^{+}$, $\tilde{\gamma}(1)=\eta(1, \gamma(1))=\gamma(1)\in E_{\mu}^{2m(c, \mu)}$. So, we get $\eta(1, \gamma(t))\in\Gamma$.

{\bf Step 2.}\ We prove that the condition \eqref{equ2-2} in Lemma \ref{lem2-7} holds.\\
By Lemma \ref{lem4-5} and Lemma \ref{lem4-8}, we can see that $F\cap B=\emptyset$, that is $F\backslash B=F$. Next, we show that
\begin{equation*}
A\cap(F\backslash B)=A\cap F=\gamma([0, 1])\cap\mathcal{P}^{-}_{c, \mu}\neq\emptyset.
\end{equation*}
Indeed, since $\gamma\in\Gamma$, then $\gamma(0)\in\mathcal{P}_{c, \mu}^{+}$, by Lemma \ref{lem4-4}, we know that $s_{\gamma(0)}=0$. Moreover, $E_{\mu}(\gamma(1))\leq2m(c, \mu)<m(c, \mu)$ since Lemma \ref{lem4-6}, then Lemma \ref{lem4-7} implies that $t_{\gamma(1)}<0$.  Furthermore, by Lemma \ref{lem4-4}, we know that $t_{\gamma(\tau)}$ is continuous in $\tau$. It follows that for every $\gamma\in\Gamma$ there exists $\gamma(\tau)\in(0, 1)$ such that $t_{\gamma(\tau)}=0$, that is, $\gamma(\tau)\in\mathcal{P}_{c, \mu}^{-}$. Therefore, $A\cap(F\backslash B)\neq\emptyset$.

{\bf Step 3.}\ We prove that the condition \eqref{equ2-3} in Lemma \ref{lem2-7} holds.\\
In fact, we need to prove that
\begin{equation*}
\inf_{\mathcal{P}^{-}_{c, \mu}}E_{\mu}\geq\sigma(c, \mu)\geq\sup_{\mathcal{P}^{+}_{c, \mu}\cup E_{\mu}^{2m(c, \mu)}}E_{\mu}.
\end{equation*}
By step 2, we know that $\gamma([0, 1])\cap\mathcal{P}^{-}_{c, \mu}\neq\emptyset$, then we have
\begin{equation*}
\max_{t\in[0, 1]}E_{\mu}(\gamma(t))\geq\inf_{\mathcal{P}^{-}_{c, \mu}}E_{\mu},
\end{equation*}
which implies that $\sigma(c, \mu)\geq \tilde{\sigma}(c, \mu)$. Moreover, if $u\in\mathcal{P}^{-}_{c, \mu}$, then for $s_{1}\gg1$,
\begin{equation*}
\gamma_{u}: \tau\in[0, 1]\mapsto((1-\tau)s_{u}+\tau s_{1})\star u\in M_{c, r}
\end{equation*}
is a path in $\Gamma$. Since $u\in\mathcal{P}^{-}_{c, \mu}$, by Lemma \ref{lem4-4}, we have $t_{u}=0$ is a global maximum point for $\mathcal{J}_{\mu}^{u}$, hence
\begin{equation*}
E_{\mu}(u)\geq\max_{t\in[0, 1]}E_{\mu}(\gamma_{u}(t))\geq\sigma(c, \mu),
\end{equation*}
which implies that $\tilde{\sigma}(c, \mu)\geq\sigma(c, \mu)$. So, we have $\tilde{\sigma}(c, \mu)=\sigma(c, \mu)>0$. In addition, by Lemma \ref{lem4-5}, we can see that $E_{\mu}(u)\leq 0$ for any $u\in\mathcal{P}^{+}_{c, \mu}\cup E_{\mu}^{2m(c, \mu)}$. Hence, we get \eqref{equ2-3} holds.

{\bf Step 4.}\ By Step 1, Step 2 and Step 3, we can see that all the conditions in Lemma \ref{lem2-7} holds. Therefore, by Lemma \ref{lem2-7}, we obtain a Palais-Smale sequence $\{u_{n}\}$ for $E_{\mu}|_{M_{c, r}}$ at the level $\sigma(c, \mu)>0$ and dist$(u_{n}, \mathcal{P}^{-}_{c, \mu})\rightarrow0$, i.e. $P_{\mu}(u_{n})\rightarrow0$.

{\bf Step 5.}\ By Step 4 and Proposition \ref{lem3-1}, up to a subsequence, $u_{n}\rightarrow\hat{u}_{\mu}$ in $H^{s}(\mathbb{R}^{3})$, where $\hat{u}_{\mu}\in M_{c, r}$ is a nonnegative radial solution to \eqref{equ1-1} for some $\hat{\lambda}_{\mu}<0$. Similar to the arguments of Theorem \ref{the1-1}-$(1)$, we know that $\hat{u}_{\mu}>0$.

Next,we are concerned with the behavior of the ground states in Theorem \ref{the1-1} as $\mu\rightarrow0^{+}$. The following two lemmas can be obtain as the proof of Lemma \ref{lem4-1} and Lemma 6.14 in \cite{LLLL}.
\begin{lemma}\label{lem4-9}
Let $2+\frac{8s}{3}<p<2_{s}^{\ast}$ and $\mu=0$. Then $\mathcal{P}_{c, \mu}^{0}=\emptyset$ and $\mathcal{P}_{c, \mu}$ is a smooth manifold of codimension $2$ in $M_{c}$.
\end{lemma}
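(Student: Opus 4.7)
The strategy closely mirrors Lemma \ref{lem4-1}, but becomes considerably simpler since the absence of the subcritical $L^q$ term allows us to combine the two defining identities of $\mathcal{P}_{c,\mu}^{0}$ directly, bypassing the Gagliardo--Nirenberg inequality and any smallness condition on $\mu$.

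First, I show $\mathcal{P}_{c,0}^{0}=\emptyset$. Assume by contradiction that some $u\in\mathcal{P}_{c,0}^{0}$ exists. Combining $P_{0}(u)=0$ with the definition \eqref{equ1-13} at $\mu=0$ yields the two identities
\begin{equation*}
a\|(-\Delta)^{\frac{s}{2}}u\|_{2}^{2}+b\|(-\Delta)^{\frac{s}{2}}u\|_{2}^{4}=\vartheta_{s,p}\|u\|_{p}^{p},
\end{equation*}
\begin{equation*}
2a\|(-\Delta)^{\frac{s}{2}}u\|_{2}^{2}+4b\|(-\Delta)^{\frac{s}{2}}u\|_{2}^{4}=p\vartheta_{s,p}^{2}\|u\|_{p}^{p}.
\end{equation*}
Multiplying the first by $p\vartheta_{s,p}$ and subtracting the second gives
\begin{equation*}
(p\vartheta_{s,p}-2)\,a\|(-\Delta)^{\frac{s}{2}}u\|_{2}^{2}+(p\vartheta_{s,p}-4)\,b\|(-\Delta)^{\frac{s}{2}}u\|_{2}^{4}=0.
\end{equation*}
Since $p>2+\tfrac{8s}{3}$ we have $p\vartheta_{s,p}>4$, so both coefficients are strictly positive and both terms are nonnegative. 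Therefore $\|(-\Delta)^{\frac{s}{2}}u\|_{2}=0$, whence $u\equiv 0$ (as $u\in L^{2}(\mathbb{R}^{3})$), contradicting $u\in M_{c}$ with $c>0$.

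For the manifold claim, since $M_{c}$ is a smooth codimension one submanifold of $H^{s}(\mathbb{R}^{3})$ cut out by $G(u):=\|u\|_{2}^{2}-c^{2}$, it suffices to show that the restriction $dP_{0}(u)|_{T_{u}M_{c}}$ is nonzero at every $u\in\mathcal{P}_{c,0}$; equivalently, that the map $(G,P_{0}):H^{s}(\mathbb{R}^{3})\to\mathbb{R}^{2}$ has surjective differential along $\mathcal{P}_{c,0}$. To exhibit a nontrivial tangent direction, I use the fiber map: for $u\in M_{c}$ the curve $\tau\mapsto\tau\star u$ lies in $M_{c}$, so its velocity $\xi:=\frac{d}{d\tau}\big|_{\tau=0}(\tau\star u)$ belongs to $T_{u}M_{c}$. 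Differentiating the identity $(\mathcal{J}_{u}^{0})'(\tau)=sP_{0}(\tau\star u)$ once more and evaluating at $\tau=0$ yields
\begin{equation*}
(\mathcal{J}_{u}^{0})''(0)=s\,\langle P_{0}'(u),\xi\rangle.
\end{equation*}
Since $u\in\mathcal{P}_{c,0}\setminus\mathcal{P}_{c,0}^{0}=\mathcal{P}_{c,0}$ (by the first part), we have $(\mathcal{J}_{u}^{0})''(0)\neq 0$, hence $\langle P_{0}'(u),\xi\rangle\neq 0$. Thus $dP_{0}(u)|_{T_{u}M_{c}}\not\equiv 0$, the implicit function theorem applies, and $\mathcal{P}_{c,0}$ is a smooth submanifold of $M_{c}$ of codimension one, i.e.\ of codimension two in $H^{s}(\mathbb{R}^{3})$.

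The only real subtlety is the sign analysis in the first step; everything else is a verbatim adaptation of Lemma \ref{lem4-1} and of Lemma 5.2 in \cite{SSSSSS}, which is why the full details can be omitted.
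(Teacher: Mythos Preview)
Your proof is correct and matches the paper's intended approach: the paper does not spell out a proof of Lemma \ref{lem4-9} but simply indicates it follows the template of Lemma \ref{lem4-1} (together with Soave's Lemma 5.2), which is exactly what you do. Your observation that for $\mu=0$ the two identities combine algebraically to force $\|(-\Delta)^{s/2}u\|_{2}=0$---so that the Gagliardo--Nirenberg step and the smallness condition on $\mu$ become unnecessary---is precisely the simplification one expects in this special case, and your fiber-map argument for the smooth manifold structure is an equivalent (constructive) variant of the Lagrange-multiplier contradiction used in \cite{SSSSSS}.
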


\begin{lemma}\label{lem4-10}
Let $2+\frac{8s}{3}<p<2_{s}^{\ast}$ and $\mu=0$. For every $u\in M_{c}$, there exists a unique $t_{u}\in\mathbb{R}$ such that $t_{u}\star u\in\mathcal{P}_{c, \mu}$. $t_{u}$ is the unique critical point of the function $\mathcal{J}_{u}^{\mu}$ and is a strict maximum point at positive level. Moreover,\\
$(1)$ $\mathcal{P}_{c, \mu}=\mathcal{P}_{c, \mu}^{-}$.\\
$(2)$ $\mathcal{J}_{u}^{\mu}$ is strictly decreasing and concave on $(t_{u}, \infty)$.\\
$(3)$ The map $u\in M_{c}\mapsto t_{u}\in\mathbb{R}$ is of class $C^{1}$.\\
$(4)$ If $P_{\mu}(u)<0$, then $t_{u}<0$.
\end{lemma}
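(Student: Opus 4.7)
The plan is to analyze the fiber map
\begin{equation*}
\mathcal{J}_u^0(\tau) = \frac{a}{2}e^{2s\tau}\|(-\Delta)^{\frac{s}{2}}u\|_{2}^{2} + \frac{b}{4}e^{4s\tau}\|(-\Delta)^{\frac{s}{2}}u\|_{2}^{4} - \frac{1}{p}e^{p\vartheta_{s,p}s\tau}\|u\|_{p}^{p},
\end{equation*}
whose shape, with $\mu=0$, is governed by the three exponential rates $2s<4s<p\vartheta_{s,p}s$ (the last inequality uses $p>2+\frac{8s}{3}$, so $p\vartheta_{s,p}>4$). As $\tau\to-\infty$ the slowest-decaying term is positive, giving $\mathcal{J}_u^0(\tau)\to 0^+$, while as $\tau\to+\infty$ the fastest-growing term is negative, giving $\mathcal{J}_u^0(\tau)\to -\infty$. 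Putting $t=e^{s\tau}$, the critical-point equation $(\mathcal{J}_u^0)'(\tau)=0$ reduces to $a\alpha + b\alpha^2 t^2 = \vartheta_{s,p}\beta t^{p\vartheta_{s,p}-2}$ with $\alpha:=\|(-\Delta)^{\frac{s}{2}}u\|_{2}^{2}$ and $\beta:=\|u\|_{p}^{p}$; since $p\vartheta_{s,p}-2>2$, an elementary monotonicity analysis (the fiber analogue of Lemma~\ref{lem2-9} with the $t^{\hat{q}}$-term absent) yields a unique positive solution $t_u$, which is then a global strict maximum of $\mathcal{J}_u^0$ at positive level.

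For (1), by Lemma~\ref{lem2-5} each $u\in\mathcal{P}_{c,0}$ satisfies $t_u=0$, and substituting the relation $(\mathcal{J}_u^0)'(t_u)=0$ into the second-derivative expression yields
\begin{equation*}
(\mathcal{J}_u^0)''(t_u) = s^2 a\alpha\, e^{2st_u}(2-p\vartheta_{s,p}) + s^2 b\alpha^2 e^{4st_u}(4-p\vartheta_{s,p}) < 0,
\end{equation*}
so $u\in\mathcal{P}_{c,0}^-$ (consistent with Lemma~\ref{lem4-9}). For the concavity assertion in (2), I would write
\begin{equation*}
(\mathcal{J}_u^0)''(\tau) = s^2 e^{2s\tau}\,\psi(e^{2s\tau}),\qquad \psi(w):=2a\alpha+4b\alpha^2 w - p\vartheta_{s,p}^{2}\beta\, w^{(p\vartheta_{s,p}-2)/2},
\end{equation*}
and note that since $(p\vartheta_{s,p}-2)/2>1$, $\psi$ is strictly concave on $(0,\infty)$, with $\psi(0^+)>0$ and $\psi(+\infty)=-\infty$; hence $\psi$ has a unique positive zero $w_0$. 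The inequality $(\mathcal{J}_u^0)''(t_u)<0$ gives $e^{2st_u}>w_0$, so $(\mathcal{J}_u^0)''(\tau)<0$ for all $\tau>t_u$, proving concavity; strict monotonic decrease on $(t_u,\infty)$ then follows because $t_u$ is the only critical point and $\mathcal{J}_u^0(+\infty)=-\infty$.

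For (3), I would apply the implicit function theorem to $\Phi(\tau,u):=(\mathcal{J}_u^0)'(\tau)$ at $(t_u,u)$; the $C^1$ regularity of $u\mapsto t_u$ follows from $\partial_\tau\Phi(t_u,u)=(\mathcal{J}_u^0)''(t_u)<0$. For (4), the identity $(\mathcal{J}_u^0)'(\tau)=sP_0(\tau\star u)$ means that $P_0(u)<0$ is equivalent to $(\mathcal{J}_u^0)'(0)<0$; since $\mathcal{J}_u^0$ is strictly increasing on $(-\infty,t_u)$ and strictly decreasing on $(t_u,+\infty)$, this forces $t_u<0$. The main obstacle is the concavity claim in (2): with three competing exponentials the sign of $(\mathcal{J}_u^0)''(\tau)$ beyond the critical point is not transparent from the strict-max property alone, and one genuinely needs the strict concavity of the auxiliary function $\psi$ to propagate the negativity at $t_u$ to the whole half-line $(t_u,\infty)$.
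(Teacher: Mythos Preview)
Your argument is correct. The paper does not give a self-contained proof of Lemma~\ref{lem4-10}; it simply remarks that the result ``can be obtained as the proof of Lemma~\ref{lem4-1} and Lemma~6.14 in \cite{LLLL}'', and the closely related Lemma~\ref{lem5-1} is handled in the same spirit by invoking Lemma~\ref{lem2-9} and then deferring the remaining points to \cite{SSSSSSS}. Your approach is the same in outline---study the shape of the fiber map $\mathcal{J}_u^0$ via the substitution $t=e^{s\tau}$, use $p\vartheta_{s,p}>4$ to get a unique critical point that is a strict global maximum at positive level, and then read off (1)--(4)---but you supply the details the paper omits.

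The one place where you genuinely add something is the concavity claim in (2). The references the paper cites typically obtain the strict decrease on $(t_u,\infty)$ directly from uniqueness of the critical point, and either leave concavity implicit or deduce it from a short second-derivative computation. Your route via the auxiliary function $\psi(w)=2a\alpha+4b\alpha^2 w - p\vartheta_{s,p}^{2}\beta\, w^{(p\vartheta_{s,p}-2)/2}$, using its strict concavity (since $(p\vartheta_{s,p}-2)/2>1$) and the sign $\psi(e^{2st_u})<0$ to propagate negativity of $(\mathcal{J}_u^0)''$ to all of $(t_u,\infty)$, is a clean and correct way to close that gap. The implicit-function-theorem step for (3) and the monotonicity argument for (4) match exactly what is done in Lemma~\ref{lem4-4}(iv) and in the cited references.
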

\begin{lemma}\label{lem4-11}
Let $2+\frac{8s}{3}<p<2_{s}^{\ast}$ and $\mu=0$. Then $m(c, 0):=\inf_{u\in\mathcal{P}_{c, 0}}E_{0}(u)>0$.
\end{lemma}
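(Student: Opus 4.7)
My plan is to derive a uniform positive lower bound for $\|(-\Delta)^{s/2}u\|_{2}$ on $\mathcal{P}_{c,0}$, and then use the Pohozaev constraint to rewrite $E_{0}$ as a sum of positive terms.

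First, I would fix $u\in\mathcal{P}_{c,0}$ and set $t:=\|(-\Delta)^{s/2}u\|_{2}$. The relation $P_{0}(u)=0$ reads
\begin{equation*}
a t^{2}+b t^{4}=\vartheta_{s,p}\|u\|_{p}^{p}.
\end{equation*}
By Lemma \ref{lem2-1}, $\|u\|_{p}^{p}\leq C^{p}(s,p)\, t^{p\vartheta_{s,p}} c^{p(1-\vartheta_{s,p})}$, so
\begin{equation*}
a t^{2}+b t^{4}\leq \vartheta_{s,p}C^{p}(s,p)c^{p(1-\vartheta_{s,p})}\, t^{p\vartheta_{s,p}}.
\end{equation*}
Since $u\in M_{c}$ forces $t>0$, dividing by $t^{2}$ and using the hypothesis $p\vartheta_{s,p}>4>2$ yields
\begin{equation*}
a\leq a+bt^{2}\leq \vartheta_{s,p}C^{p}(s,p)c^{p(1-\vartheta_{s,p})}\, t^{p\vartheta_{s,p}-2},
\end{equation*}
which gives a uniform lower bound $t\geq t_{0}:=\bigl(a/[\vartheta_{s,p}C^{p}(s,p)c^{p(1-\vartheta_{s,p})}]\bigr)^{1/(p\vartheta_{s,p}-2)}>0$.

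Next, I use $P_{0}(u)=0$ to eliminate $\|u\|_{p}^{p}$ in the energy:
\begin{equation*}
E_{0}(u)=a\Bigl(\tfrac{1}{2}-\tfrac{1}{p\vartheta_{s,p}}\Bigr)t^{2}+b\Bigl(\tfrac{1}{4}-\tfrac{1}{p\vartheta_{s,p}}\Bigr)t^{4}.
\end{equation*}
Because $p\vartheta_{s,p}>4$, both coefficients are strictly positive. Combining with $t\geq t_{0}$, I conclude
\begin{equation*}
E_{0}(u)\geq a\Bigl(\tfrac{1}{2}-\tfrac{1}{p\vartheta_{s,p}}\Bigr)t_{0}^{2}>0 \qquad\text{for every } u\in\mathcal{P}_{c,0},
\end{equation*}
and taking the infimum gives $m(c,0)>0$.

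No step looks delicate: the only point worth double-checking is the strict positivity of both coefficients $\tfrac{1}{2}-\tfrac{1}{p\vartheta_{s,p}}$ and $\tfrac{1}{4}-\tfrac{1}{p\vartheta_{s,p}}$, which is precisely where the $L^{2}$-supercritical hypothesis $p>2+\tfrac{8s}{3}$ (equivalently $p\vartheta_{s,p}>4$) is used; without it the $b$-term could spoil the sign. Everything else is a direct combination of the Pohozaev identity on $\mathcal{P}_{c,0}$ and the fractional Gagliardo--Nirenberg inequality from Lemma \ref{lem2-1}.
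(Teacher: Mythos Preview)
Your proof is correct and follows essentially the same route as the paper: use $P_{0}(u)=0$ together with the fractional Gagliardo--Nirenberg inequality to get a uniform positive lower bound on $\|(-\Delta)^{s/2}u\|_{2}$, then rewrite $E_{0}$ on $\mathcal{P}_{c,0}$ as $a\bigl(\tfrac12-\tfrac{1}{p\vartheta_{s,p}}\bigr)t^{2}+b\bigl(\tfrac14-\tfrac{1}{p\vartheta_{s,p}}\bigr)t^{4}$ and use $p\vartheta_{s,p}>4$. Your version is slightly more explicit about the constant $t_{0}$, but the argument is identical in structure.
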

\begin{proof}
By Lemma \ref{lem2-1} and $P_{0}=0$, we can see that
\begin{equation*}
a\|(-\Delta)^{\frac{s}{2}}u\|_{2}^{2}+b\|(-\Delta)^{\frac{s}{2}}u\|_{2}^{4}=\vartheta_{s, p}\|u\|_{p}^{p}\leq\vartheta_{s, p}C^{p}(s, p)\|(-\Delta)^{\frac{s}{2}}u\|_{2}^{p\vartheta_{s, p}}c^{p(1-\vartheta_{s, p})},
\end{equation*}
which implies that $\inf_{\mathcal{P}_{c, 0}}\|(-\Delta)^{\frac{s}{2}}u\|_{2}\geq C>0$ since $p\vartheta_{s, p}>4$. Then, using $P_{0}=0$, we have
\begin{equation*}
\inf_{\mathcal{P}_{c, 0}}E_{0}(u)=\inf_{\mathcal{P}_{c, 0}}\left\{(\frac{a}{2}-\frac{a}{p\vartheta_{s, p}})\|(-\Delta)^{\frac{s}{2}}u\|_{2}^{2}+(\frac{b}{4}-\frac{b}{p\vartheta_{s, p}})\|(-\Delta)^{\frac{s}{2}}u\|_{2}^{4}\right\}\geq C>0.
\end{equation*}

\end{proof}

\begin{lemma}\label{lem4-12}
Let $2+\frac{8s}{3}<p<2_{s}^{\ast}$ and $\mu=0$. There exists $k>0$ sufficiently small such
\begin{equation*}
0<\sup_{\overline{A_{k}}}E_{0}<m(c, 0) \quad and \quad u\in\overline{A_{k}}\Rightarrow E_{0}(u), P_{0}(u)>0,
\end{equation*}
where $A_{k}=\{u\in M_{c}: \|(-\Delta)^{\frac{s}{2}}u\|_{2}<k\}$.
\end{lemma}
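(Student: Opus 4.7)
The plan is to exploit the mismatch of exponents: since $p\vartheta_{s,p}>4>2$, the fractional Gagliardo--Nirenberg inequality (Lemma \ref{lem2-1}) shows that the $L^p$-term is of strictly higher order in $\|(-\Delta)^{s/2}u\|_2$ than the kinetic terms when this seminorm is small. Combined with the fact that $m(c,0)>0$ (Lemma \ref{lem4-11}), this will yield all three claims by taking $k$ sufficiently small.

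First, I would fix $u\in \overline{A_k}\subset M_c$ and apply Lemma \ref{lem2-1} to estimate
\begin{equation*}
\|u\|_p^p\le C^p(s,p)\,c^{p(1-\vartheta_{s,p})}\|(-\Delta)^{s/2}u\|_2^{p\vartheta_{s,p}}.
\end{equation*}
Substituting this into the definitions of $E_0$ and $P_0$ (with $\mu=0$) yields
\begin{equation*}
E_0(u)\ge \tfrac{a}{2}\|(-\Delta)^{s/2}u\|_2^{2}+\tfrac{b}{4}\|(-\Delta)^{s/2}u\|_2^{4}-\tfrac{C^p(s,p)}{p}c^{p(1-\vartheta_{s,p})}\|(-\Delta)^{s/2}u\|_2^{p\vartheta_{s,p}},
\end{equation*}
\begin{equation*}
P_0(u)\ge a\|(-\Delta)^{s/2}u\|_2^{2}+b\|(-\Delta)^{s/2}u\|_2^{4}-\vartheta_{s,p}\,C^p(s,p)\,c^{p(1-\vartheta_{s,p})}\|(-\Delta)^{s/2}u\|_2^{p\vartheta_{s,p}}.
\end{equation*}
Since $u\in M_c$ forces $u\not\equiv 0$ (hence $\|(-\Delta)^{s/2}u\|_2>0$) and $p\vartheta_{s,p}>2$, one can factor out $\|(-\Delta)^{s/2}u\|_2^{2}$ and pick $k_1>0$ so small that the remaining bracket is positive for all $\|(-\Delta)^{s/2}u\|_2\le k_1$. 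This gives $E_0(u),P_0(u)>0$ on $\overline{A_{k_1}}$.

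For the strict inequality with $m(c,0)$, I would drop the negative term and use the trivial bound
\begin{equation*}
\sup_{u\in\overline{A_k}}E_0(u)\le \tfrac{a}{2}k^{2}+\tfrac{b}{4}k^{4}.
\end{equation*}
By Lemma \ref{lem4-11}, $m(c,0)>0$, so one can pick $k_2>0$ with $\tfrac{a}{2}k_2^{2}+\tfrac{b}{4}k_2^{4}<m(c,0)$. Setting $k:=\min\{k_1,k_2\}$ delivers all the required properties simultaneously. There is no real obstacle here; the only point to watch is that the two smallness requirements on $k$ (one ensuring positivity of $E_0$ and $P_0$, the other ensuring the upper bound lies below the positive level $m(c,0)$) are compatible, which they are since each is satisfied on a neighborhood of $0$.
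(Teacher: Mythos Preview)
Your proof is correct and follows essentially the same route as the paper: use the fractional Gagliardo--Nirenberg inequality to get lower bounds on $E_0$ and $P_0$ in which the $L^p$-term is of higher order in $\|(-\Delta)^{s/2}u\|_2$, ensure positivity for small $k$, and then bound $E_0$ from above by the kinetic part $\tfrac{a}{2}k^2+\tfrac{b}{4}k^4$ to get under $m(c,0)>0$. The only cosmetic difference is that the paper drops the $a$-term in the lower bound and compares the quartic to the $p\vartheta_{s,p}$-power (using $p\vartheta_{s,p}>4$), whereas you keep the quadratic term and use $p\vartheta_{s,p}>2$; both work equally well here.
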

\begin{proof}
For $u\in\overline{A_{k}}$, by Lemma \ref{lem2-1}, we have
\begin{equation*}
E_{0}(u)\geq\frac{b}{4}\|(-\Delta)^{\frac{s}{2}}u\|_{2}^{4}-\frac{C^{p}(s, p)}{p}c^{p(1-\vartheta_{s, p})}\|(-\Delta)^{\frac{s}{2}}u\|_{2}^{p\vartheta_{s, p}}>0,
\end{equation*}
\begin{equation*}
P_{0}(u)\geq b\|(-\Delta)^{\frac{s}{2}}u\|_{2}^{4}-\vartheta_{s, p}C^{p}(s, p)c^{p(1-\vartheta_{s, p})}\|(-\Delta)^{\frac{s}{2}}u\|_{2}^{p\vartheta_{s, p}}>0,
\end{equation*}
with $k$ small enough since $p\vartheta_{s, p}>4$. Now if replacing $k$ with a smaller quantity, recalling that $m(c, 0)>0$ by Lemma \ref{lem4-11}, we also have
\begin{equation*}
E_{0}(u)\leq\frac{a}{2}\|(-\Delta)^{\frac{s}{2}}u\|_{2}^{2}+\frac{b}{4}\|(-\Delta)^{\frac{s}{2}}u\|_{2}^{4}<m(c, 0).
\end{equation*}
\end{proof}

Now, by Lemmas \ref{lem4-9}-\ref{lem4-12} and using the same arguments in Section 7 in \cite{SSSSSS}, we have an immediate result:
\begin{lemma}\label{lem4-13}
Let $2+\frac{8s}{3}<p<2_{s}^{\ast}$ and $\mu=0$. There exists a positive radial critical point $u_{0}$ for $E_{0}|_{M_{c}}$ at a positive level
\begin{equation*}
m_{r}(c, 0)=m(c, 0)=\inf_{\mathcal{P}_{c, 0}}E_{0}=E_{0}(u_{0}),
\end{equation*}
and as a result $u_{0}$ is the unique ground state of $E_{0}|_{M_{c}}$. Here $m_{r}(c, 0):=\inf_{\mathcal{P}_{c, 0}\cap M_{c, r}}E_{0}$.
\end{lemma}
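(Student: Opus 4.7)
\textbf{Proof proposal for Lemma \ref{lem4-13}.} The plan is to mimic the Mountain Pass construction from Section 7 of \cite{SSSSSS}, adapted to the fractional Kirchhoff setting, and then combine it with the compactness result Proposition \ref{lem3-1} and Lemma \ref{lem2-3} (Schwarz symmetrization). The structural ingredients are already in place: by Lemma \ref{lem4-10} the fiber map $\mathcal{J}_u^{0}$ has a unique critical point $t_u$ which is a strict global maximum, so that $\mathcal{P}_{c,0}=\mathcal{P}_{c,0}^{-}$; by Lemma \ref{lem4-12} there is a small ball $\overline{A_k}\subset M_c$ on which $E_0,P_0>0$ and $\sup_{\overline{A_k}}E_0<m(c,0)$; and by Lemma \ref{lem4-11} one has $m(c,0)>0$.

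First I would work on the radial manifold $M_{c,r}$ (so as to recover compactness in subcritical $L^r$-norms via Lemma \ref{lem2-4}) and introduce the minimax class
\begin{equation*}
\Gamma_0:=\bigl\{\gamma\in C([0,1],M_{c,r}):\ \gamma(0)\in \overline{A_k}\cap M_{c,r},\ E_0(\gamma(1))<0\bigr\},
\end{equation*}
which is non-empty because for any $u\in M_{c,r}\cap\overline{A_k}$ the path $\tau\mapsto (\tau\star u)$ eventually enters $\{E_0<0\}$ as $\tau\to+\infty$ by Lemma \ref{lem4-10}. Define
\begin{equation*}
m_r^{MP}(c,0):=\inf_{\gamma\in\Gamma_0}\max_{t\in[0,1]}E_0(\gamma(t)).
\end{equation*}
I would then apply the linking theorem Lemma \ref{lem2-7} with $\mathcal{F}=\Gamma_0$, $F=\mathcal{P}_{c,0}\cap M_{c,r}$, and $B=(\overline{A_k}\cap M_{c,r})\cup\{u\in M_{c,r}: E_0(u)\leq 0\}$. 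The homotopy-stability of $\Gamma_0$ is routine. For the intersection condition \eqref{equ2-2}, observe that any $\gamma\in\Gamma_0$ starts at $\gamma(0)\in\overline{A_k}$ where $P_0(\gamma(0))>0$ by Lemma \ref{lem4-12}, so $t_{\gamma(0)}>0$, while $E_0(\gamma(1))<0<m(c,0)=\inf_{\mathcal{P}_{c,0}}E_0$ forces $t_{\gamma(1)}<0$ by Lemma \ref{lem4-10}(4); continuity of $\tau\mapsto t_{\gamma(\tau)}$ then yields a $\tau_*\in(0,1)$ with $t_{\gamma(\tau_*)}=0$, i.e.\ $\gamma(\tau_*)\in\mathcal{P}_{c,0}\cap M_{c,r}$. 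For the energy condition \eqref{equ2-3}, Lemma \ref{lem4-12} gives $\sup_B E_0\leq m(c,0)\leq\inf_F E_0$, and the standard argument using the canonical path $\tau\mapsto(\tau\star u)$ around any $u\in\mathcal{P}_{c,0}\cap M_{c,r}$ shows $m_r^{MP}(c,0)=m_r(c,0)$.

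Lemma \ref{lem2-7} then produces a Palais--Smale sequence $\{u_n\}\subset M_{c,r}$ for $E_0|_{M_c}$ at level $m_r(c,0)>0$ with $\mathrm{dist}(u_n,\mathcal{P}_{c,0})\to 0$, hence $P_0(u_n)\to 0$. Since $m_r(c,0)\neq 0$, Proposition \ref{lem3-1} applies and yields (up to a subsequence) $u_n\to u_0$ strongly in $H^s(\mathbb{R}^3)$, with $u_0$ a radial solution of \eqref{equ1-1} (at $\mu=0$) for some Lagrange multiplier $\lambda_0<0$ and $E_0(u_0)=m_r(c,0)$. Positivity of $u_0$ follows exactly as in the proof of Theorem \ref{the1-1}-(1): by Lemma \ref{lem2-3} one may take $u_0\geq 0$ radially decreasing, and if $u_0(x_0)=0$ at some point the pointwise singular-integral representation of $(-\Delta)^s u_0(x_0)$ would be strictly negative, contradicting the PDE.

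Next I would verify that $m_r(c,0)=m(c,0)$. For any $u\in\mathcal{P}_{c,0}$ the symmetric decreasing rearrangement $|u|^*\in M_{c,r}$ satisfies $\|(-\Delta)^{s/2}|u|^*\|_2\leq\|(-\Delta)^{s/2}u\|_2$ by Lemma \ref{lem2-3} and $\||u|^*\|_p=\|u\|_p$; then projecting onto $\mathcal{P}_{c,0}$ via the unique fiber maximum from Lemma \ref{lem4-10} (which is monotone in the $\dot H^s$-norm in the right way) one obtains $t_{|u|^*}\star|u|^*\in\mathcal{P}_{c,0}\cap M_{c,r}$ with $E_0(t_{|u|^*}\star|u|^*)\leq E_0(u)$, giving $m_r(c,0)\leq m(c,0)$; the reverse inequality is trivial. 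Hence $u_0$ is a ground state at level $m(c,0)$. Finally, uniqueness of $u_0$ is the main delicate point: I would reduce it, as in Section 7 of \cite{SSSSSS}, to the uniqueness (up to translations) of positive radial solutions of the limiting fractional scalar equation $(a+bA)(-\Delta)^s v=\lambda_0 v+|v|^{p-2}v$ with a prescribed mass, using Lemma \ref{lem2-2} to fix the Kirchhoff coefficient $A=\|(-\Delta)^{s/2}v\|_2^2$ and invoking the known uniqueness results for fractional Schr\"odinger ground states cited in \cite{LLLL}. The principal obstacle I anticipate is precisely this uniqueness step, since it requires transferring the rigidity result from the pure fractional Schr\"odinger setting to the Kirchhoff-perturbed one, which must be done by treating $a+bA$ as a fixed constant determined a posteriori by the Pohozaev identity.
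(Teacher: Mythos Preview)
Your proposal is correct and follows essentially the same route the paper takes: the paper's proof is the single line ``by Lemmas \ref{lem4-9}--\ref{lem4-12} and using the same arguments in Section 7 in \cite{SSSSSS}'', and what you have written is precisely a faithful reconstruction of that Mountain-Pass-on-$M_{c,r}$ argument, combined with Proposition \ref{lem3-1} for compactness and Lemma \ref{lem2-3} for the passage $m_r(c,0)=m(c,0)$. Your identification of the uniqueness step as the delicate point is apt; the paper handles it exactly as you suggest, by freezing the Kirchhoff coefficient and invoking the Frank--Lenzmann--Silvestre uniqueness theorem (see the end of the proof of Theorem \ref{the1-2}-(2), where Theorem 3.4 of \cite{FF} is cited).
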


\begin{lemma}\label{lem4-14}
Let $2<q<2+\frac{4s}{3}$, $2+\frac{8s}{3}<p<2_{s}^{\ast}$ and $0<\mu<\min\{\mu_{1}, \mu_{2}\}$. Then
\begin{equation*}
\inf_{\mathcal{P}_{c, \mu}^{-}\cap M_{c, r}}E_{\mu}=\inf_{u\in M_{c, r}}\max_{\tau\in\mathbb{R}}E_{\mu}(\tau\star u),
\end{equation*}
and
\begin{equation*}
\inf_{\mathcal{P}_{c, 0}^{-}\cap M_{c, r}}E_{0}=\inf_{u\in M_{c, r}}\max_{\tau\in\mathbb{R}}E_{0}(\tau\star u).
\end{equation*}
\end{lemma}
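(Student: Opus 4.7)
\medskip

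\noindent\textbf{Proof proposal for Lemma \ref{lem4-14}.} The plan is to establish each identity by a pair of opposite inequalities, using the structural information about the fiber map $\mathcal{J}_{u}^{\mu}$ provided by Lemma \ref{lem4-4} (for $\mu>0$ small) and Lemma \ref{lem4-10} (for $\mu=0$). Throughout the argument I will use the elementary observation that the scaling action $u\mapsto \tau\star u$ preserves both the $L^{2}$-norm and the radial symmetry of $u$, so $\tau\star u\in M_{c,r}$ whenever $u\in M_{c,r}$.

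\medskip

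\noindent\textbf{Step 1 (the inequality $\leq$).} Let $u\in\mathcal{P}_{c,\mu}^{-}\cap M_{c,r}$. Since $u=0\star u$ belongs to $\mathcal{P}_{c,\mu}$, the value $\tau=0$ is a critical point of $\mathcal{J}_{u}^{\mu}$ by Lemma \ref{lem2-5}. Because $u\in\mathcal{P}_{c,\mu}^{-}$, it cannot coincide with $s_{u}\star u\in\mathcal{P}_{c,\mu}^{+}$ (the sets $\mathcal{P}_{c,\mu}^{\pm}$ are disjoint and $\mathcal{P}_{c,\mu}^{0}=\emptyset$ by Lemma \ref{lem4-1}), hence $0=t_{u}$ and by Lemma \ref{lem4-4}(iii)
\begin{equation*}
\max_{\tau\in\mathbb{R}}E_{\mu}(\tau\star u)=E_{\mu}(t_{u}\star u)=E_{\mu}(u).
\end{equation*}
Taking the infimum over $u\in\mathcal{P}_{c,\mu}^{-}\cap M_{c,r}$ then yields
\begin{equation*}
\inf_{u\in M_{c,r}}\max_{\tau\in\mathbb{R}}E_{\mu}(\tau\star u)\leq \inf_{\mathcal{P}_{c,\mu}^{-}\cap M_{c,r}}E_{\mu}.
\end{equation*}

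\medskip

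\noindent\textbf{Step 2 (the inequality $\geq$).} Conversely, fix any $u\in M_{c,r}$. By Lemma \ref{lem4-4} there exists $t_{u}\in\mathbb{R}$ with $t_{u}\star u\in\mathcal{P}_{c,\mu}^{-}$, and since scaling preserves radial symmetry, $t_{u}\star u\in\mathcal{P}_{c,\mu}^{-}\cap M_{c,r}$. Using Lemma \ref{lem4-4}(iii) once more,
\begin{equation*}
\max_{\tau\in\mathbb{R}}E_{\mu}(\tau\star u)=E_{\mu}(t_{u}\star u)\geq \inf_{\mathcal{P}_{c,\mu}^{-}\cap M_{c,r}}E_{\mu}.
\end{equation*}
Taking the infimum over $u\in M_{c,r}$ gives the reverse inequality, and the first identity follows.

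\medskip

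\noindent\textbf{Step 3 (the case $\mu=0$).} The argument is identical, with Lemma \ref{lem4-10} playing the role of Lemma \ref{lem4-4}: for every $u\in M_{c,r}$ there is a unique $t_{u}\in\mathbb{R}$ with $t_{u}\star u\in\mathcal{P}_{c,0}=\mathcal{P}_{c,0}^{-}$, and this $t_{u}$ is the global maximizer of $\mathcal{J}_{u}^{0}$. One then repeats Steps 1 and 2 verbatim. There is no genuine obstacle in this lemma; the only subtlety worth double-checking is the preservation of radial symmetry under the scaling $(\tau\star u)(x)=e^{3\tau/2}u(e^{\tau}x)$, which is immediate, and the disjointness of $\mathcal{P}_{c,\mu}^{+}$ and $\mathcal{P}_{c,\mu}^{-}$, which is guaranteed by $\mathcal{P}_{c,\mu}^{0}=\emptyset$ (Lemma \ref{lem4-1}) in the regime $0<\mu<\min\{\mu_{1},\mu_{2}\}$.
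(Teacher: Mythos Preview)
Your proof is correct and follows essentially the same two-inequality argument as the paper: for $u\in\mathcal{P}_{c,\mu}^{-}\cap M_{c,r}$ one has $t_{u}=0$ so $E_{\mu}(u)=\max_{\tau}E_{\mu}(\tau\star u)$, while for arbitrary $u\in M_{c,r}$ the element $t_{u}\star u$ lies in $\mathcal{P}_{c,\mu}^{-}\cap M_{c,r}$, and the case $\mu=0$ is handled identically via Lemma~\ref{lem4-10}. Your write-up is in fact slightly more careful than the paper's in justifying why $0=t_{u}$ (rather than $s_{u}$) when $u\in\mathcal{P}_{c,\mu}^{-}$.
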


\begin{proof}
For $u\in\mathcal{P}_{c, 0}^{-}\cap M_{c, r}$, by Lemma \ref{lem4-4}, we can get that $t_{u}=0$ and
\begin{equation*}
E_{\mu}(u)=\max_{\tau\in\mathbb{R}}E_{\mu}(\tau\star u)\geq \inf_{v\in M_{c, r}}\max_{\tau\in\mathbb{R}}E_{\mu}(\tau\star v).
\end{equation*}
Moreover, if $u\in M_{c, r}$, then $t_{u}\star u\in\mathcal{P}_{c, \mu}^{-}\cap M_{c, r}$, so
\begin{equation*}
\max_{\tau\in\mathbb{R}}E_{\mu}(\tau\star u)=E_{\mu}(t_{u}\star u)\geq\inf_{\mathcal{P}_{c, \mu}^{-}\cap M_{c, r}}E_{\mu}.
\end{equation*}
Similarly, we can prove $\inf_{\mathcal{P}_{c, 0}^{-}\cap M_{c, r}}E_{0}=\inf_{u\in M_{c, r}}\max_{\tau\in\mathbb{R}}E_{0}(\tau\star u)$ by Lemma \ref{lem4-10}.

\end{proof}

\begin{lemma}\label{lem4-15}
Let $2<q<2+\frac{4s}{3}$, $2+\frac{8s}{3}<p<2_{s}^{\ast}$ and $0\leq\mu_{\alpha}<\mu_{\beta}<\min\{\mu_{1}, \mu_{2}\}$. Then it holds that $\sigma(c, \mu_{\beta})\leq\sigma(c, \mu_{\alpha})\leq m(c, 0)$.

\end{lemma}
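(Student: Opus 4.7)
The plan is to prove both inequalities as direct consequences of the $\inf$-$\max$ characterizations of $\sigma(c,\mu)$ and $m(c,0)$ established earlier, together with the monotonicity of $E_\mu$ in the parameter $\mu$. The key structural observation is that, reading off the formula \eqref{equ1-6}, the only $\mu$-dependent term in $E_\mu$ is $-\frac{\mu}{q}\|u\|_q^q$, which is nonpositive. Hence the map $\mu\mapsto E_\mu(u)$ is nonincreasing for every fixed $u\in H^s(\mathbb{R}^3)$, and in particular, whenever $0\leq\mu_\alpha<\mu_\beta$, one has the pointwise bound
\begin{equation*}
E_{\mu_\beta}(\tau\star u)\leq E_{\mu_\alpha}(\tau\star u)\leq E_0(\tau\star u)\qquad\text{for every }u\in M_{c,r},\ \tau\in\mathbb{R}.
\end{equation*}

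For the first inequality $\sigma(c,\mu_\beta)\leq\sigma(c,\mu_\alpha)$ (in the case $\mu_\alpha>0$), I would invoke Lemma \ref{lem4-14} to rewrite
\begin{equation*}
\sigma(c,\mu)=\inf_{u\in M_{c,r}}\max_{\tau\in\mathbb{R}}E_\mu(\tau\star u)\qquad\text{for }\mu\in\{\mu_\alpha,\mu_\beta\}.
\end{equation*}
Taking the supremum over $\tau\in\mathbb{R}$ preserves the pointwise inequality above, and then passing to the infimum over $u\in M_{c,r}$ yields the desired bound. For the second inequality $\sigma(c,\mu_\alpha)\leq m(c,0)$, I would combine Lemma \ref{lem4-13} (which identifies $m(c,0)=m_r(c,0)=\inf_{\mathcal{P}_{c,0}\cap M_{c,r}}E_0$), Lemma \ref{lem4-10} (which gives $\mathcal{P}_{c,0}=\mathcal{P}_{c,0}^-$), and the second identity in Lemma \ref{lem4-14} to deduce
\begin{equation*}
m(c,0)=\inf_{u\in M_{c,r}}\max_{\tau\in\mathbb{R}}E_0(\tau\star u).
\end{equation*}
Applying the same max-then-inf comparison to $E_{\mu_\alpha}\leq E_0$ along scaled orbits concludes the argument. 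When $\mu_\alpha=0$ the first inequality is then absorbed into the second.

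The argument is essentially a short monotonicity comparison once the minimax formulas are in place, so no substantive obstacle is expected. The only point of care is ensuring that the representation of Lemma \ref{lem4-14} is used correctly in both the $\mu>0$ and $\mu=0$ regimes, the latter relying on the identification $\mathcal{P}_{c,0}=\mathcal{P}_{c,0}^-$ from Lemma \ref{lem4-10} and on the fact that the global maximum point $t_u$ provided by Lemma \ref{lem4-10} realizes $\max_\tau E_0(\tau\star u)$ along every orbit.
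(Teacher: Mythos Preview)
Your proposal is correct and follows essentially the same route as the paper: both arguments rest on the $\inf$-$\max$ characterization from Lemma~\ref{lem4-14} (together with the identification $\sigma(c,\mu)=\inf_{\mathcal{P}_{c,\mu}^{-}\cap M_{c,r}}E_{\mu}$ established in Step~3 of the proof of Theorem~\ref{the1-1}-(2), which you should cite explicitly rather than attributing to Lemma~\ref{lem4-14} alone) and on the pointwise monotonicity of $\mu\mapsto E_{\mu}(u)$. The only cosmetic difference is that for the inequality $\sigma(c,\mu_{\beta})\leq\sigma(c,\mu_{\alpha})$ the paper evaluates the $\inf$-$\max$ at the specific realizer $\hat{u}_{\mu_{\alpha}}$ from Theorem~\ref{the1-1}-(2), whereas you compare the two $\inf$-$\max$ values directly without invoking existence of a minimizer; your version is slightly more self-contained but otherwise equivalent.
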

\begin{proof}
Using the step $3$ in the proof of Theorem \ref{the1-1}-$(2)$, we know that $\sigma(c, \mu)=\inf_{\mathcal{P}_{c, \mu}^{-}\cap M_{c, r}}E_{\mu}$.
By Lemmas \ref{lem4-13}-\ref{lem4-14}, we can see that
\begin{equation*}
\sigma(c, \mu_{\alpha})=\inf_{u\in M_{c, r}}\max_{\tau\in\mathbb{R}}E_{\mu_{\alpha}}(\tau\star u)\leq\inf_{u\in M_{c, r}}\max_{\tau\in\mathbb{R}}E_{0}(\tau\star u)=m_{r}(c, 0)=m(c, 0),
\end{equation*}
and
\begin{equation*}
\sigma(c, \mu_{\beta})\leq\max_{\tau\in\mathbb{R}}E_{\mu_{\beta}}(\tau\star \hat{u}_{\mu_{\alpha}})\leq\max_{\tau\in\mathbb{R}}E_{\mu_{\alpha}}(\tau\star \hat{u}_{\mu_{\alpha}})=E_{\mu_{\alpha}}(\hat{u}_{\mu_{\alpha}})=\sigma(c, \mu_{\alpha}).
\end{equation*}
\end{proof}

\text{\bf Proof of Theorem \ref{the1-2}-(1).}From Lemma \ref{lem4-3}, it is easy to check that $R_{0}=R_{0}(c, \mu)\rightarrow0$ as $\mu\rightarrow0^{+}$. Then $\|(-\Delta)^{\frac{s}{2}}\tilde{u}_{\mu}\|_{2}\rightarrow0$ and
\begin{equation*}
\begin{split}
&0>m(c, \mu)\geq\frac{a}{2}\|(-\Delta)^{\frac{s}{2}}\tilde{u}_{\mu}\|_{2}^{2}+\frac{b}{4}\|(-\Delta)^{\frac{s}{2}}\tilde{u}_{\mu}\|_{2}^{4}\\
&-\frac{\mu}{q}C^{q}(s, q)\|(-\Delta)^{\frac{s}{2}}\tilde{u}_{\mu}\|_{2}^{q\vartheta_{s, q}}c^{q(1-\vartheta_{s, q})}
-\frac{1}{p}C^{p}(s, p)\|(-\Delta)^{\frac{s}{2}}\tilde{u}_{\mu}\|_{2}^{p\vartheta_{s, p}}c^{p(1-\vartheta_{s, p})}\rightarrow0,
\end{split}
\end{equation*}
which implies that $m(c, \mu)\rightarrow0$.

\text{\bf Proof of Theorem \ref{the1-2}-(2).}Firstly, we prove that the family $\{\hat{u}_{\mu}: 0<\mu<\tilde{\mu}\}$ is bounded in $H^{s}(\mathbb{R}^{3})$. In fact, from Lemma \ref{lem2-1}, Lemma \ref{lem4-15} and $P_{\mu}(\hat{u}_{\mu})=0$, we can see that
\begin{equation*}
\begin{split}
m(c, 0)&\geq \sigma(c, \mu)=E_{\mu}(\hat{u}_{\mu})\\
&\geq(\frac{a}{2}-\frac{a}{p\vartheta_{s, p}})\|(-\Delta)^{\frac{s}{2}}\hat{u}_{\mu}\|_{2}^{2}+(\frac{b}{4}-\frac{b}{p\vartheta_{s, p}})\|(-\Delta)^{\frac{s}{2}}\hat{u}_{\mu}\|_{2}^{4}\\
&-\frac{\mu}{q}(1-\frac{q\vartheta_{s, q}}{p\vartheta_{s, p}})C^{q}(s, q)c^{q(1-\vartheta_{s, q})}\|(-\Delta)^{\frac{s}{2}}\hat{u}_{\mu}\|_{2}^{q\vartheta_{s, q}},
\end{split}
\end{equation*}
which implies that $\{\hat{u}_{\mu}\}$ is bounded in $H^{s}(\mathbb{R}^{3})$. Hence, up to a subsequence, we may assume that $\hat{u}_{\mu}\rightharpoonup \hat{u}\geq0$ in $H^{s}(\mathbb{R}^{3})$, $\hat{u}_{\mu}\rightarrow \hat{u}$ in $L^{r}(\mathbb{R}^{3})$, for any $r\in(2, 2_{s}^{\ast})$ and $\hat{u}_{\mu}\rightarrow \hat{u}$ a.e. in $\mathbb{R}^{3}$. Note that $\hat{u}_{\mu}$ solves
\begin{equation}\label{equ4-10}
\left(a+b\int_{\mathbb{R}^{3}}|(-\Delta)^{\frac{s}{2}}\hat{u}_{\mu}|^{2}dx\right)(-\Delta)^{s}\hat{u}_{\mu}=\hat{\lambda}_{\mu} \hat{u}_{\mu}+\mu|\hat{u}_{\mu}|^{q-2}\hat{u}_{\mu}+|\hat{u}_{\mu}|^{p-2}\hat{u}_{\mu} \quad \hbox{in $\mathbb{R}^3$,}
\end{equation}
with $\hat{\lambda}_{\mu}<0$. Then by $P_{\mu}(\hat{u}_{\mu})=0$, we have
\begin{equation*}
\hat{\lambda}_{\mu}c^{2}=\mu(\vartheta_{s, q}-1)\|\hat{u}_{\mu}\|_{q}^{q}+(\vartheta_{s, p}-1)\|\hat{u}_{\mu}\|_{p}^{p}.
\end{equation*}
So, by $\mu>0$ and $\vartheta_{s, p}, \vartheta_{s, q}\in(0, 1)$, up to a subsequence, we may assume that $\hat{\lambda}_{\mu}\rightarrow \hat{\lambda}\leq0$ satisfying $\hat{\lambda}c^{2}=(\vartheta_{s, p}-1)\|\hat{u}\|_{p}^{p}$ with $\hat{\lambda}=0$ if and only if $\hat{u}\equiv0$. Now, we show that $\hat{\lambda}<0$. Indeed, since $\hat{u}$ is a weak radial solution to
\begin{equation}\label{equ4-11}
(a+bB)(-\Delta)^{s}\hat{u}=\hat{\lambda} \hat{u}+|\hat{u}|^{p-2}\hat{u} \quad \hbox{in $\mathbb{R}^3$,}
\end{equation}
where $B:=\lim_{\mu\rightarrow0^{+}}\|(-\Delta)^{\frac{s}{2}}\hat{u}_{\mu}\|_{2}^{2}\geq\|(-\Delta)^{\frac{s}{2}}\hat{u}\|_{2}^{2}$. By Lemma \ref{lem4-15}, we can get
\begin{equation*}
\begin{split}
&-\frac{b}{4}\|(-\Delta)^{\frac{s}{2}}\hat{u}\|_{2}^{4}+(\frac{\vartheta_{s, p}}{2}-\frac{1}{p})\|\hat{u}\|_{p}^{p}\\
&\geq\lim_{\mu\rightarrow0^{+}}\left[-\frac{b}{4}\|(-\Delta)^{\frac{s}{2}}\hat{u}_{\mu}\|_{2}^{4}+(\frac{\vartheta_{s, p}}{2}-\frac{1}{p})\|\hat{u}_{\mu}\|_{p}^{p}-\mu(\frac{1}{q}-\frac{\vartheta_{s, q}}{2})\|\hat{u}_{\mu}\|_{q}^{q}\right]\\
&=\lim_{\mu\rightarrow0^{+}}E_{\mu}(\hat{u}_{\mu})=\lim_{\mu\rightarrow0^{+}}\sigma(c, \mu)\geq\sigma(c, \bar{\mu})>0,
\end{split}
\end{equation*}
which implies that $\hat{u}\not\equiv0$. Hence, $\hat{\lambda}<0$ and $B>0$. Similar to the arguments of Theorem \ref{the1-1}-$(1)$, we know that $\hat{u}>0$. Testing \eqref{equ4-10}-\eqref{equ4-11} with $\hat{u}_{\mu}-\hat{u}$, we have
\begin{equation*}
(a+bB)\int_{\mathbb{R}^{3}}|(-\Delta)^{\frac{s}{2}}(\hat{u}_{\mu}-\hat{u})|^{2}dx-\hat{\lambda}\int_{\mathbb{R}^{3}}|\hat{u}_{\mu}-\hat{u}|^{2}dx\rightarrow0,
\end{equation*}
which implies that $\hat{u}_{\mu}\rightarrow \hat{u}$ in $H^{s}(\mathbb{R}^{3})$ as $\mu\rightarrow0^{+}$. Then, we have
\begin{equation*}
E_{0}(\hat{u})=\frac{a}{2}\|(-\Delta)^{\frac{s}{2}}\hat{u}\|_{2}^{2}+\frac{b}{4}\|(-\Delta)^{\frac{s}{2}}\hat{u}\|_{2}^{4}-\frac{1}{p}\|\hat{u}\|_{p}^{p}=\lim_{\mu\rightarrow0^{+}}E_{\mu}(\hat{u}_{\mu})=\lim_{\mu\rightarrow0^{+}}\sigma(c, \mu)\leq m(c, 0).
\end{equation*}
Obviously, $m(c, 0)\leq E_{0}(\hat{u})$. Therefore, $E_{0}(\hat{u})=m(c, 0)$ and $\hat{u}$ is a positive solution to \eqref{equ4-11}. Finally, since \eqref{equ4-11} has a unique positive solution $u_{0}$ by Theorem 3.4 of \cite{FF}, $\hat{u}=u_{0}$.

\section{$L^{2}$-supercritical case}

In this section, we always assume that $2+\frac{8s}{3}<q<p<2_{s}^{\ast}$, then $4<q\vartheta_{s, q}<p\vartheta_{s, p}$. We will show Theorems \ref{the1-3}-\ref{the1-4}. Firstly, we can prove that $\mathcal{P}_{c, \mu}^{0}=\emptyset$ and $\mathcal{P}_{c, \mu}$ is a smooth manifold of codimension $2$ in $M_{c}$ in a standard way, see Lemma \ref{lem4-1} or Lemma \ref{lem4-9}.

\begin{lemma}\label{lem5-1}
Let $2+\frac{8s}{3}<q<p<2_{s}^{\ast}$ and $\mu>0$. For every $u\in M_{c}$, $\mathcal{J}_{u}^{\mu}$ has a unique critical point $t_{u}$, which is a strict maximum point at the positive level. Moreover,\\
$(1)$ $\mathcal{P}_{c, \mu}=\mathcal{P}_{c, \mu}^{-}$.\\
$(2)$ $\mathcal{J}_{u}^{\mu}$ is strictly decreasing and concave on $(t_{u}, \infty)$.\\
$(3)$ The map $u\in M_{c}\mapsto t_{u}\in\mathbb{R}$ is of class $C^{1}$.\\
$(4)$ If $P_{\mu}(u)<0$, then $t_{u}<0$.
\end{lemma}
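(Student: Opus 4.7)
The plan is to rewrite $\mathcal{J}_u^\mu$ as a function of $T=e^{s\tau}$ so that Lemma \ref{lem2-9} applies directly, and then to leverage the preliminary fact $\mathcal{P}_{c,\mu}^0=\emptyset$ (proved exactly as in Lemma \ref{lem4-1}, using now the window $q\vartheta_{s,q},\,p\vartheta_{s,p}\in(4,+\infty)$) to refine the conclusion.

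First I would note that, substituting $T=e^{s\tau}>0$, the fiber map \eqref{equ1-11} becomes
\begin{equation*}
f(T)=\frac{a}{2}\|(-\Delta)^{\frac{s}{2}}u\|_{2}^{2}T^{2}+\frac{b}{4}\|(-\Delta)^{\frac{s}{2}}u\|_{2}^{4}T^{4}-\frac{\mu}{q}\|u\|_{q}^{q}T^{q\vartheta_{s,q}}-\frac{1}{p}\|u\|_{p}^{p}T^{p\vartheta_{s,p}},
\end{equation*}
which has exactly the form of Lemma \ref{lem2-9} since $q\vartheta_{s,q},p\vartheta_{s,p}\in(4,+\infty)$. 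Lemma \ref{lem2-9} then gives a unique $T_u>0$ which is a strict global maximum of $f$ at a positive level, and setting $t_u=\frac{1}{s}\log T_u$ yields a unique critical point of $\mathcal{J}_u^\mu$ at a positive maximum.

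For claim (1), the identification of critical points with elements of $\mathcal{P}_{c,\mu}$ from Lemma \ref{lem2-5} gives $t_u\star u\in\mathcal{P}_{c,\mu}$; since $t_u$ is a strict maximum we have $(\mathcal{J}_u^\mu)''(t_u)\le 0$, and because $\mathcal{P}_{c,\mu}^0=\emptyset$ the inequality is strict, forcing $t_u\star u\in\mathcal{P}_{c,\mu}^-$; uniqueness of $t_u$ then yields $\mathcal{P}_{c,\mu}=\mathcal{P}_{c,\mu}^-$. For the strictly decreasing part of (2), the absence of further critical points together with the fact that $\mathcal{J}_u^\mu(\tau)\to-\infty$ as $\tau\to\infty$ is enough. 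For concavity, I would compute
\begin{equation*}
(\mathcal{J}_u^\mu)''(\tau)=s^{2}\Bigl[2ae^{2s\tau}\|(-\Delta)^{\frac{s}{2}}u\|_{2}^{2}+4be^{4s\tau}\|(-\Delta)^{\frac{s}{2}}u\|_{2}^{4}-\mu q\vartheta_{s,q}^{2}e^{q\vartheta_{s,q} s\tau}\|u\|_{q}^{q}-p\vartheta_{s,p}^{2}e^{p\vartheta_{s,p}s\tau}\|u\|_{p}^{p}\Bigr],
\end{equation*}
and view it again, through $T=e^{s\tau}$, as a function of the type handled by Lemma \ref{lem2-9}: it starts at $0$, reaches a unique positive maximum, then decreases monotonically to $-\infty$, and hence has a unique zero beyond which it stays negative. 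Since $(\mathcal{J}_u^\mu)''(t_u)<0$ by the argument above, $t_u$ already lies past that zero, so $(\mathcal{J}_u^\mu)''<0$ on all of $[t_u,+\infty)$, giving concavity.

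For (3), apply the implicit function theorem to $\Phi(\tau,u):=(\mathcal{J}_u^\mu)'(\tau)\in C^{1}(\mathbb{R}\times M_c,\mathbb{R})$ at $(t_u,u)$: by the preceding step $\partial_{\tau}\Phi(t_u,u)=(\mathcal{J}_u^\mu)''(t_u)<0$, so $u\mapsto t_u$ is $C^{1}$. For (4), note that as $\tau\to-\infty$ each of the four terms in $\mathcal{J}_u^\mu$ vanishes and the $e^{2s\tau}$-term dominates, so $\mathcal{J}_u^\mu$ is strictly increasing on $(-\infty,t_u)$ and strictly decreasing on $(t_u,\infty)$; consequently $(\mathcal{J}_u^\mu)'(0)=sP_\mu(u)<0$ forces $0>t_u$.

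The main obstacle I expect is the concavity assertion in (2), because it is not a direct restatement of Lemma \ref{lem2-9} applied to $\mathcal{J}_u^\mu$ itself but must be extracted from the analogous structure of the second derivative; the linking identity $(\mathcal{J}_u^\mu)''-s q\vartheta_{s,q}(\mathcal{J}_u^\mu)'$ evaluated at $t_u$ provides a clean way to see that each surviving term has a negative coefficient, which is what makes the transition between $(\mathcal{J}_u^\mu)''(t_u)<0$ and the global sign analysis work.
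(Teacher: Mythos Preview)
Your proposal is correct and follows essentially the same route as the paper: invoke Lemma~\ref{lem2-9} (made explicit by your substitution $T=e^{s\tau}$) for the existence and uniqueness of $t_u$, then combine the maximality inequality $(\mathcal{J}_u^\mu)''(t_u)\le 0$ with $\mathcal{P}_{c,\mu}^0=\emptyset$ to obtain $\mathcal{P}_{c,\mu}=\mathcal{P}_{c,\mu}^-$, and handle the remaining items by the implicit function theorem and the monotonicity structure of $\mathcal{J}_u^\mu$. The paper's own proof is terser---it records only the argument for (1) and defers (2)--(4) to Lemma~6.1 of \cite{SSSSSSS}---so your write-up in fact supplies the details that the paper omits, including the observation that the second derivative has again the shape of Lemma~\ref{lem2-9}, which cleanly yields the concavity claim.
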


\begin{proof}
From Lemma \ref{lem2-9}, it is easy to see that $\mathcal{J}_{u}^{\mu}$ has a unique maximum point $t_{u}$ at the positive level. By maximality $(\mathcal{J}_{u}^{\mu})''(t_{u})\leq0$, then $(\mathcal{J}_{t_{u}\star u}^{\mu})''(0)=(\mathcal{J}_{u}^{\mu})''(t_{u})\leq0$. Since $t_{u}\star u\in\mathcal{P}_{c, \mu}$ and $\mathcal{P}_{c, \mu}^{0}=\emptyset$, $t_{u}\star u\in\mathcal{P}_{c, \mu}^{-}$. In particular, this with Lemma \ref{lem2-5} imply that $\mathcal{P}_{c, \mu}=\mathcal{P}_{c, \mu}^{-}$. The rest of the proof is very similar to that Lemma 6.1 in \cite{SSSSSSS}, we omit it.

\end{proof}

Next, by adopting a similar argument as the proof in the Lemma \ref{lem4-11} and Lemma \ref{lem4-12}, we have

\begin{lemma}\label{lem5-2}
Let $2+\frac{8s}{3}<q<p<2_{s}^{\ast}$ and $\mu>0$. Then $m(c, \mu):=\inf_{u\in\mathcal{P}_{c, \mu}}E_{\mu}(u)>0$.
\end{lemma}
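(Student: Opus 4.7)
The plan is to mimic the strategy of Lemma \ref{lem4-11}, exploiting the fact that in the purely $L^{2}$-supercritical regime we have the uniform inequality $4<q\vartheta_{s,q}<p\vartheta_{s,p}$, which is stronger than what was available in the mixed-critical case.

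First, I would establish a uniform lower bound on $\|(-\Delta)^{s/2}u\|_{2}$ over $\mathcal{P}_{c,\mu}$. Since $u\in\mathcal{P}_{c,\mu}$ satisfies $P_{\mu}(u)=0$, combining this identity with the fractional Gagliardo--Nirenberg inequality (Lemma \ref{lem2-1}) yields
\begin{equation*}
a\|(-\Delta)^{\frac{s}{2}}u\|_{2}^{2}+b\|(-\Delta)^{\frac{s}{2}}u\|_{2}^{4}
\leq \mu\vartheta_{s,q}C^{q}(s,q)c^{q(1-\vartheta_{s,q})}\|(-\Delta)^{\frac{s}{2}}u\|_{2}^{q\vartheta_{s,q}}
+\vartheta_{s,p}C^{p}(s,p)c^{p(1-\vartheta_{s,p})}\|(-\Delta)^{\frac{s}{2}}u\|_{2}^{p\vartheta_{s,p}}.
\end{equation*}
Because both exponents on the right exceed $4$, while the left has leading exponent $4$ at infinity and $2$ at the origin, this forces $\inf_{\mathcal{P}_{c,\mu}}\|(-\Delta)^{\frac{s}{2}}u\|_{2}\geq C_{0}>0$ (otherwise one obtains a contradiction by letting the gradient shrink to $0$).

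Second, I would rewrite $E_{\mu}$ on $\mathcal{P}_{c,\mu}$ by eliminating the $L^{q}$-term. Using $P_{\mu}(u)=0$ to substitute
\begin{equation*}
\frac{\mu}{q}\|u\|_{q}^{q}=\frac{1}{q\vartheta_{s,q}}\Bigl(a\|(-\Delta)^{\frac{s}{2}}u\|_{2}^{2}+b\|(-\Delta)^{\frac{s}{2}}u\|_{2}^{4}-\vartheta_{s,p}\|u\|_{p}^{p}\Bigr),
\end{equation*}
I get the identity
\begin{equation*}
E_{\mu}(u)=a\Bigl(\tfrac{1}{2}-\tfrac{1}{q\vartheta_{s,q}}\Bigr)\|(-\Delta)^{\frac{s}{2}}u\|_{2}^{2}+b\Bigl(\tfrac{1}{4}-\tfrac{1}{q\vartheta_{s,q}}\Bigr)\|(-\Delta)^{\frac{s}{2}}u\|_{2}^{4}+\Bigl(\tfrac{\vartheta_{s,p}}{q\vartheta_{s,q}}-\tfrac{1}{p}\Bigr)\|u\|_{p}^{p}.
\end{equation*}

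Third, I would observe that all three coefficients above are strictly positive: $q\vartheta_{s,q}>4$ gives $\tfrac{1}{4}-\tfrac{1}{q\vartheta_{s,q}}>0$ (and a fortiori $\tfrac{1}{2}-\tfrac{1}{q\vartheta_{s,q}}>0$), while $p\vartheta_{s,p}>q\vartheta_{s,q}$ gives $\tfrac{\vartheta_{s,p}}{q\vartheta_{s,q}}>\tfrac{\vartheta_{s,p}}{p\vartheta_{s,p}}=\tfrac{1}{p}$. Combining this with the lower bound from the first step yields
\begin{equation*}
E_{\mu}(u)\geq a\Bigl(\tfrac{1}{2}-\tfrac{1}{q\vartheta_{s,q}}\Bigr)C_{0}^{2}+b\Bigl(\tfrac{1}{4}-\tfrac{1}{q\vartheta_{s,q}}\Bigr)C_{0}^{4}>0,
\end{equation*}
uniformly in $u\in\mathcal{P}_{c,\mu}$, which gives $m(c,\mu)>0$. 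There is no real obstacle here: the whole argument is algebraic once one has chosen to eliminate the $L^{q}$-term rather than the $L^{p}$-term (eliminating the $L^{p}$-term leaves a negative $\|u\|_{q}^{q}$ coefficient and fails). The only delicate point is checking the sign of the three coefficients after the substitution, which is exactly why the hypothesis $2+\frac{8s}{3}<q$ (i.e.\ $q\vartheta_{s,q}>4$) is needed.
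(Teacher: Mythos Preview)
Your proof is correct and matches the strategy the paper indicates (it only says ``by adopting a similar argument as the proof in Lemma~\ref{lem4-11}'' without details). The two ingredients---a uniform positive lower bound on $\|(-\Delta)^{s/2}u\|_{2}$ coming from $P_{\mu}(u)=0$ plus Gagliardo--Nirenberg, and a rewriting of $E_{\mu}$ on $\mathcal{P}_{c,\mu}$ with all coefficients positive---are exactly right.

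One minor remark on your closing comment: eliminating the $L^{p}$-term does not literally ``fail''; it is simply less clean, because the resulting negative $\|u\|_{q}^{q}$ term would then have to be controlled again via Gagliardo--Nirenberg. An equally clean alternative, used implicitly in the paper (cf.\ the $L^{2}$-supercritical branch of Proposition~\ref{lem3-1}), is to subtract $\tfrac{1}{4}P_{\mu}(u)=0$ from $E_{\mu}(u)$, obtaining
\[
E_{\mu}(u)=\frac{a}{4}\|(-\Delta)^{\frac{s}{2}}u\|_{2}^{2}+\mu\frac{q\vartheta_{s,q}-4}{4q}\|u\|_{q}^{q}+\frac{p\vartheta_{s,p}-4}{4p}\|u\|_{p}^{p}\geq \frac{a}{4}C_{0}^{2}>0,
\]
which avoids any substitution. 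Either variant is a minor algebraic choice within the same argument.
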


\begin{lemma}\label{lem5-3}
Let $2+\frac{8s}{3}<q<p<2_{s}^{\ast}$ and $\mu>0$. There exists $k>0$ sufficiently small such that
\begin{equation*}
0<\sup_{\overline{A_{k}}}E_{\mu}<m(c, \mu) \quad and \quad u\in\overline{A_{k}}\Rightarrow E_{\mu}(u), P_{\mu}(u)>0,
\end{equation*}
where $A_{k}=\{u\in M_{c}: \|(-\Delta)^{\frac{s}{2}}u\|_{2}<k\}$.
\end{lemma}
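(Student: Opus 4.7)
The plan is to mimic the reasoning used to prove Lemma \ref{lem4-12}, exploiting the fact that now both exponents satisfy $q\vartheta_{s,q},p\vartheta_{s,p}>4>2$, so that for $u\in M_c$ with small $\|(-\Delta)^{s/2}u\|_2$ the quadratic term $\frac{a}{2}\|(-\Delta)^{s/2}u\|_2^2$ (resp.\ $a\|(-\Delta)^{s/2}u\|_2^2$ in $P_\mu$) dominates every other term. The key input is the fractional Gagliardo--Nirenberg inequality of Lemma \ref{lem2-1}, which yields for every $u\in M_c$
\begin{equation*}
\|u\|_q^q\leq C^q(s,q)c^{q(1-\vartheta_{s,q})}\|(-\Delta)^{s/2}u\|_2^{q\vartheta_{s,q}},\qquad \|u\|_p^p\leq C^p(s,p)c^{p(1-\vartheta_{s,p})}\|(-\Delta)^{s/2}u\|_2^{p\vartheta_{s,p}}.
\end{equation*}

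First I would verify the pointwise inequalities $E_\mu(u)>0$ and $P_\mu(u)>0$ on $\overline{A_k}$. Plugging the above bounds into \eqref{equ1-6} and \eqref{equ1-10}, and writing $t=\|(-\Delta)^{s/2}u\|_2\in[0,k]$, I obtain
\begin{equation*}
E_\mu(u)\geq \tfrac{a}{2}t^2+\tfrac{b}{4}t^4-\tfrac{\mu}{q}C^q(s,q)c^{q(1-\vartheta_{s,q})}t^{q\vartheta_{s,q}}-\tfrac{1}{p}C^p(s,p)c^{p(1-\vartheta_{s,p})}t^{p\vartheta_{s,p}},
\end{equation*}
\begin{equation*}
P_\mu(u)\geq at^2+bt^4-\mu\vartheta_{s,q}C^q(s,q)c^{q(1-\vartheta_{s,q})}t^{q\vartheta_{s,q}}-\vartheta_{s,p}C^p(s,p)c^{p(1-\vartheta_{s,p})}t^{p\vartheta_{s,p}}.
\end{equation*}
Since $q\vartheta_{s,q},p\vartheta_{s,p}>2$, each of the two negative terms is $o(t^2)$ as $t\to 0^+$, so both right-hand sides are strictly positive for $t\in(0,k_1]$ provided $k_1>0$ is chosen sufficiently small, depending only on $a,b,c,\mu,p,q,s$.

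Next I would establish the upper bound $\sup_{\overline{A_k}}E_\mu<m(c,\mu)$. Discarding the nonpositive terms in \eqref{equ1-6} yields the crude estimate $E_\mu(u)\leq \tfrac{a}{2}t^2+\tfrac{b}{4}t^4$ for all $u\in\overline{A_k}$. By Lemma \ref{lem5-2} we have $m(c,\mu)>0$, hence there exists $k_2\in(0,k_1]$ such that $\tfrac{a}{2}k_2^2+\tfrac{b}{4}k_2^4<m(c,\mu)$. Setting $k=k_2$ then gives simultaneously
\begin{equation*}
0<\sup_{\overline{A_k}}E_\mu\leq \tfrac{a}{2}k^2+\tfrac{b}{4}k^4<m(c,\mu),\qquad E_\mu(u)>0,\ P_\mu(u)>0\ \text{for all }u\in\overline{A_k},
\end{equation*}
which is exactly the statement of the lemma.

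There is no genuine obstacle here: the argument is a two-sided squeeze of $E_\mu$ on the small ball $\overline{A_k}$, and the only thing to be careful about is choosing $k$ small enough so that (i) both negative contributions of order $t^{q\vartheta_{s,q}},t^{p\vartheta_{s,p}}$ are dominated by the quadratic terms in $E_\mu$ and $P_\mu$, and (ii) the resulting upper bound $\tfrac{a}{2}k^2+\tfrac{b}{4}k^4$ lies below the strictly positive quantity $m(c,\mu)$ guaranteed by Lemma \ref{lem5-2}. Taking the minimum of the thresholds produced in the three inequalities yields an admissible $k$.
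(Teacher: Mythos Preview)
Your proposal is correct and follows essentially the same approach as the paper. The paper does not spell out a proof of Lemma~\ref{lem5-3} but simply states that it is obtained ``by adopting a similar argument as the proof in the Lemma~\ref{lem4-11} and Lemma~\ref{lem4-12}''; your argument is precisely such an adaptation, the only cosmetic difference being that you compare the negative terms against the quadratic part $\tfrac{a}{2}t^{2}$ (using $q\vartheta_{s,q},p\vartheta_{s,p}>2$) whereas the proof of Lemma~\ref{lem4-12} compares against the quartic part $\tfrac{b}{4}t^{4}$ (using $p\vartheta_{s,p}>4$); either choice works here since in fact $q\vartheta_{s,q},p\vartheta_{s,p}>4$.
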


We define the minimax class
\begin{equation*}
\Gamma:=\{\gamma\in C([0, 1], M_{c, r}): \gamma(0)\in\overline{A_{k}}, E_{\mu}(\gamma(1))\leq0\}.
\end{equation*}
Clearly, $\Gamma\neq\emptyset$. In fact, for any $u\in M_{c, r}$, there exist $\tau_{0}\ll-1$ and $\tau_{1}\gg1$ such that $\tau_{0}\star u\in\overline{A_{k}}$, $E_{\mu}(\tau_{1}\star u)<0$ and $\tau\mapsto\tau\star u$ is continuous. Then, we only need let $\gamma(t)=\tau_{t}\star u$.

Next, we can define the minimax value
\begin{equation*}
\sigma(c, \mu)=\inf_{\gamma\in\Gamma}\max_{t\in[0, 1]}E_{\mu}(\gamma(t)).
\end{equation*}

Finally, by applying Lemmas \ref{lem5-1}-\ref{lem5-2} and Proposition \ref{lem3-1}, we can prove Theorems \ref{the1-3}-\ref{the1-4}.

\text{\bf Proof of Theorem \ref{the1-3}.} We divide the proof into five steps.\\
{\bf Step 1.}\ In order to use Lemma \ref{lem2-7}, let us set
\begin{equation*}
\mathcal{F}:=\Gamma, \quad A:=\gamma([0, 1]), \quad F:=\mathcal{P}_{c, \mu}, \quad B:=\overline{A_{k}}\cup E_{\mu}^{0},
\end{equation*}
where $E_{\mu}^{0}:=\{u\in M_{c, r}: E_{\mu}(u)\leq 0\}$. Now, according to Definition \ref{lem2-6}, we show that $\mathcal{F}$ is homotopy-stable family with extended boundary $B$: for any $\gamma\in\Gamma$ and $\eta\in C([0, 1]\times M_{c, r}, M_{c, r})$ satisfying $\eta(t, u)=u$, $(t, u)\in(0\times M_{c, r})\cup([0, 1]\times B)$, we want to get $\eta(1, \gamma(t))\in\Gamma$. In fact, let $\tilde{\gamma}(t)=\eta(1, \gamma(t))$, then $\tilde{\gamma}(0)=\eta(1, \gamma(0))=\gamma(0)\in \mathcal{P}_{c, \mu}^{+}$, $\tilde{\gamma}(1)=\eta(1, \gamma(1))=\gamma(1)\in E_{\mu}^{0}$. So, we get $\eta(1, \gamma(t))\in\Gamma$.

{\bf Step 2.}\ We prove that the condition \eqref{equ2-2} in Lemma \ref{lem2-7} holds.\\
By Lemma \ref{lem5-2} and Lemma \ref{lem5-3}, we can see that $F\cap B=\emptyset$, that is $F\backslash B=F$. Next, we show that
\begin{equation*}
A\cap(F\backslash B)=A\cap F=\gamma([0, 1])\cap\mathcal{P}_{c, \mu}\neq\emptyset.
\end{equation*}
Indeed, since $\gamma\in\Gamma$, then $\gamma(0)\in\overline{A_{k}}$. By Lemma \ref{lem5-3}, we know that $P_{\mu}(\gamma(0))>0$. Moreover, $E_{\mu}(\gamma(1))\leq0$. We consider the fiber map $\mathcal{J}_{\gamma(1)}^{\mu}$, then $t_{\gamma(1)}<0$. By Lemma \ref{lem5-1}-$(2)$, we can see that $P_{\mu}(\gamma(1))=\frac{1}{s}(\mathcal{J}_{\gamma(1)}^{\mu})'(0)<0$. Therefore, by the continuity of $P_{\mu}(\gamma(t))$, we know that for every $\gamma\in\Gamma$, there exists $\tau_{\gamma}\in(0, 1)$ such that $P_{\mu}(\gamma(\tau_{\gamma}))=0$. Hence, we can see that $A\cap(F\backslash B)\neq\emptyset$.

{\bf Step 3.}\ We prove that the condition \eqref{equ2-3} in Lemma \ref{lem2-7} holds.\\
In fact, we need to prove that
\begin{equation*}
\inf_{\mathcal{P}_{c, \mu}}E_{\mu}\geq\sigma(c, \mu)\geq\sup_{\overline{A_{k}}\cup E_{\mu}^{0}}E_{\mu}.
\end{equation*}
By step 2, we know that $\gamma([0, 1])\cap\mathcal{P}_{c, \mu}\neq\emptyset$, then we have
\begin{equation*}
\max_{t\in[0, 1]}E_{\mu}(\gamma(t))\geq\inf_{\mathcal{P}_{c, \mu}}E_{\mu},
\end{equation*}
which implies that $\sigma(c, \mu)\geq m(c, \mu)$. Moreover, if $u\in\mathcal{P}_{c, \mu}$, then for $s_{0}\ll-1$ and $s_{1}\gg1$,
\begin{equation*}
\gamma_{u}: \tau\in[0, 1]\mapsto((1-\tau)s_{0}+\tau s_{1})\star u\in M_{c, r}
\end{equation*}
is a path in $\Gamma$. Since $u\in\mathcal{P}_{c, \mu}$, by Lemma \ref{lem5-1}, we have $t_{u}=0$ is a global maximum point for $\mathcal{J}_{\mu}^{u}$. Hence,
\begin{equation*}
E_{\mu}(u)\geq\max_{t\in[0, 1]}E_{\mu}(\gamma_{u}(t))\geq\sigma(c, \mu),
\end{equation*}
which implies that $m(c, \mu)\geq\sigma(c, \mu)$. So, we have $m(c, \mu)=\sigma(c, \mu)>0$ by Lemma \ref{lem5-2}. In addition, by Lemma \ref{lem5-3}, we can see that $E_{\mu}(u)\leq m(c, \mu)$ for any $u\in\overline{A_{k}}\cup E_{\mu}^{0}$. Hence, we get \eqref{equ2-3} holds.

{\bf Step 4.}\ By Step 1, Step 2 and Step 3, we can see that all the conditions in Lemma \ref{lem2-7} holds. Therefore, by Lemma \ref{lem2-7}, we obtain a Palais-Smale sequence $\{u_{n}\}$ for $E_{\mu}|_{M_{c, r}}$ at the level $\sigma(c, \mu)= m(c, \mu)>0$ and dist$(u_{n}, \mathcal{P}_{c, \mu})\rightarrow0$, i.e. $P_{\mu}(u_{n})\rightarrow0$.

{\bf Step 5.}\ By Step 4 and Proposition \ref{lem3-1}, up to a subsequence $u_{n}\rightarrow\hat{u}_{\mu}$ in $H^{s}(\mathbb{R}^{3})$, with $\hat{u}_{\mu}\in M_{c, r}$ is a nonnegative radial solution to \eqref{equ1-1} for some $\hat{\lambda}_{\mu}<0$. Similar to the arguments of Theorem \ref{the1-1}-$(1)$, we know that $\hat{u}_{\mu}>0$.

\text{\bf Proof of Theorem \ref{the1-4}.} The proof is very similar to that of Theorem \ref{the1-2}-$(2)$, we omit it.

\text{\bf Acknowledgements}
The authors would like to thank the anonymous referees for carefully reading this paper and making valuable comments and suggestions. This work is supported by NSFC grant (12071486).

\end{document}